\newif\ifsiam     
\newif\ifnummat   
\newif\ifmoc      
\newif\ifblank
\newif\ifima

\siamfalse
\nummatfalse
\mocfalse
\blankfalse
\imafalse

\imatrue

\ifsiam
    \documentclass[review]{siamart190516}
    \bibliographystyle{siamplain}
    \newtheorem{remark}{Remark}[section]
\fi

\ifnummat
    \RequirePackage{fix-cm}
    \documentclass{svjour3}
    \journalname{Numerische Mathematik}
    \smartqed
    \DeclareMathAlphabet{\mathcal}{OMS}{cmsy}{m}{n}
    \bibliographystyle{spmpsci}
\fi

\ifmoc
    \documentclass{amsart}
    \newtheorem{theorem}{Theorem}[section]
    \newtheorem{lemma}[theorem]{Lemma}
    \newtheorem{corollary}[theorem]{Corollary}

    \theoremstyle{definition}

    \theoremstyle{remark}
    \newtheorem{remark}[theorem]{Remark}

    \numberwithin{equation}{section}

\fi

\ifima
    \documentclass{imanum}
    \usepackage{color}
\fi

\numberwithin{theorem}{section}
\numberwithin{equation}{section}
    \usepackage[all]{xy}
    \usepackage{amssymb,amsmath,stmaryrd}
    \usepackage{amscd}
    \usepackage{braket,amsfonts,amsopn}
    \usepackage{bm}
    \usepackage{makecell,multirow,diagbox}
    \usepackage{mathrsfs}
    \usepackage{graphicx,epstopdf}
    \usepackage{subfigure}
\ifmoc
    \usepackage[pdftex,colorlinks]{hyperref}
\fi

\DeclareMathOperator{\divg}{div}

\DeclareMathOperator{\curl}{curl}
\DeclareMathOperator{\rot}{rot}

\newcommand{\ab}[2]{\langle#1,#2\rangle}

\newcommand{\Th}{\mathcal{T}_{h}}

\newcommand{\CT}{\mathcal{T}}

\newcommand{\vertiii}[1]{{\left\vert\kern-0.25ex\left\vert\kern-0.25ex\left\vert #1
    \right\vert\kern-0.25ex\right\vert\kern-0.25ex\right\vert}}


\def\ltzAuthor{Ludmil T. Zikatanov}

\def\yulAuthor{Yuwen Li}

\ifsiam
    
    \def\yulShortAuthor{Y.~Li}
\else
    \def\ltzShortAuthor{L. T. Zikatanov}
    \def\yulShortAuthor{Y. Li}
\fi

\def\yulAddress{Department of Mathematics, Pennsylvania State University, University Park, PA 16802.}

\def\ltzEmail{ludmil@psu.edu}
\def\yulEmail{yuwli@psu.edu}


\title{Residual-based a posteriori error estimates of mixed methods for {a three-field}  Biot's consolidation model}

\def\shortTitle{A posteriori estimates of mixed methods for Biot's model}

\def\myKeywords{TBD}

\def\myAMS{65N12, 65N15, 65N30}

\def\myAbstract{We present residual-based a posteriori error estimates of mixed finite element methods for the three-field formulation of Biot's consolidation model. The error estimator is an upper and lower bound of the space time discretization error up to data oscillation. As a by-product, we also obtain new a posteriori error estimate of mixed finite element methods for the heat equation.}


\begin{document}


\ifsiam
  \author{\yulAuthor%
         \thanks{\yulAddress {Email:}\, \yulEmail.}
}
  \maketitle

  \begin{abstract}\myAbstract\end{abstract}
  \begin{keywords}\myKeywords\end{keywords}
  \begin{AMS}\myAMS\end{AMS}
  \pagestyle{myheadings}
  \thispagestyle{plain}
  \markboth{\yulShortAuthor}{\shortTitle}

\fi


\ifnummat
   \author{
          \yulAuthor%
           }
  \institute{
              \yulShortAuthor : \yulAddress\, {Email:}\yulEmail}
  \date{Received: 23 November 2019\  / Accepted: date}
  \maketitle
  \begin{abstract}\myAbstract\end{abstract}
  \begin{keywords}\myKeywords\end{keywords}
  \begin{subclass}\myAMS \end{subclass}
  \markboth{
  \yulShortAuthor }{\shortTitle}
\fi


\ifmoc
    \bibliographystyle{amsplain}
    \author[\yulShortAuthor,\ \ltzShortAuthor]{\yulAuthor,\ \ltzAuthor}
    \address{\yulAddress}
    \email{\yulEmail,\ \ltzEmail}

    \subjclass[2010]{Primary \myAMS}
    \date{23 November 2019}
    \begin{abstract}\myAbstract\end{abstract}
    \maketitle
    \markboth{
    \yulShortAuthor,\ \ltzShortAuthor }{\shortTitle}
\fi



\ifima
    \bibliographystyle{IMANUM-BIB}
    \author{%
    {\sc
    Yuwen Li \thanks{Corresponding author. Email: yuwli@psu.edu},
    Ludmil T.~Zikatanov\thanks{Email: ludmil@psu.edu}
    } \\[2pt]
    Department of Mathematics, Pennsylvania State University, University Park, PA 16802, USA}
    \shortauthorlist{Y.~Li,\ and\ L.~T.~Zikatanov}

    \maketitle

    \begin{abstract}
    {\myAbstract}
    {Biot's consolidation model, a posteriori error estimates, reliability, efficiency, heat equation}
    \end{abstract}
\fi

\section{Introduction}
The mathematical modeling of poro-elastic materials is aimed at describing the
interactions between the deformation and fluid flow in a
fluid-saturated porous medium. In this paper we provide a posteriori error estimators for the fully discrete, time dependent Biot's consolidation model for poroelastic media. A pioneering model of poroelasticity in one-dimensional setting was given in~\cite{terzaghi}.
Nowadays, the popular formulations are in three-dimensions and they follow the model by Maurice Biot in several works, e.g., \cite{biot1,biot2}. The system of partial differential equations describing the Biot's consolidation model has a great deal of applications in geomechanics,
petroleum engineering, and biomechanics.

The two-field formulation of Biot's consolidation model is classical and has
been investigated in e.g., \cite{Zen1984,Showalter2000,Murad1996,ErnSeb2009}.
Three-field formulations, which include an unknown Darcy velocity,
several conforming and non-conforming discretizations involving
Stokes-stable finite-element spaces have been recently proposed  as a viable approach for discretization of the Biot's model.
Various three field formulations were considered in \cite{Wheeler2007I,Wheeler2007II} with
and a priori error estimates are presented in such a work.
Recenly,  three-field formulation using Stokes stable elements,  based on displacement, pressure, and
total pressure was proposed and analysed in~\cite{OyRu2016}. A nonconforming discretization, which also provides element-wise mass conservation, is found  in~\cite{HRGZ2017}. Parameter robust analysis using three field discontinuous Galerkin formulation is
given in~\cite{HongKraus2018}, where a general theory for the a priori error analysis was introduced.
Other stable discretizations and solvers are presented in e.g.,~\cite{LJJ2016,LMW2017,RHOAGZ2018}. Readers are referred to \cite{LJJ2016} for parameter robust error analysis for four- and five-field formulations. Finite volume and finite difference discretizations have also been used in this field and we point to~\cite{Gaspar2003,Gaspar2006,nordbotten_FVM} for more results and references on such methods for Biot's system. We note that our further considerations are restricted to the finite element method and we will not discuss finite difference and finite volumen methods here.

There are a few works on a posteriori error control for the fully discretized time-dependent problem, see, e.g., \cite{Johnson1991,Johnson1995,Picasso1998,Verfurth2003,Makridakis2003,Lakkis2006,Ern2015,Ern2019} for a posteriori error estimates of the primal formulation of the heat equation. A posteriori error estimation of the mixed formulation of the heat equation can be found in e.g., \cite{CFA2006,Ern2010,Larson2011,Memon2012,KPS2018}. For the classical two-field formulation in Biot's consolidation model, residual, equilibrated, and functional error estimators are derived in \cite{ErnSeb2009,RPEGK2017,Kumar2018}. In addition, equilibrated error estimators are {developed in \cite{ARN2019,ARN2020} for the four- and five-field formulations and the fixed stress splitting scheme.} Comparing to the equilibrated error indicators, residual error estimators are simpler to implement and do not require solving auxiliary problems on local patches. Several space-time adaptive algorithms based on residual error estimators are proven to be convergent, see, e.g., \cite{ChenFeng2004,Kreuzer2012,Kreuzer2019}.

A main result in our paper is the construction of the reliable residual-based a posteriori error estimator for the three field Biot's system. To the best of our knowledge, there are no such error estimators for the mixed formulations of the Biot's model using more than two fields. Formulations using more than two fields have conservation properties which makes them practically interesting, however,  their analysis is more challenging. In this paper, we derive residual a posteriori error estimates for the three-field formulation and prove that the estimator is reliable, that is, it provides an upper bound of the space-time error in the natural variational norm. Since the three-field formulation directly approximates the flux $\bm{w}\in H(\divg,\Omega)$, special attention must be paid to energy estimates and the residual in the dual space $H(\divg,\Omega)^\prime$, which is a major obstacle in the construction of such error estimators. The analysis presented here with the help of regular decomposition and commuting quasi-interpolations, however, successfully tackles such problems, see Theorems \ref{regqi} and \ref{semiresult} for details.

Another main result of this paper is the lower bound in Theorem \ref{lower}. As far as we know, existing residual, equilibrated, and functional error estimators in Biot's consolidation model are not shown to be lower bounds of the space-time discretization error. This is partly due to the complexity of the Biot's model equations. Motivated by Verf\"urth 's technique introduced in \cite{Verfurth2003}, we split the residual and estimator into space and time parts. The temporal estimator can be controlled by the spatial estimator and discretization error, while the spatial estimator is in turn controlled by the finite element error and a small portion of the temporal estimator, where the ``smallness" is due to a weight function in time. The details are given later in Section~\ref{seclower}.

Since the three-field formulation of Biot's consolidation model \eqref{var} contains the mixed formulation of the heat equation \eqref{heat}, we review existing a posteriori error estimates of mixed methods for the heat equation. Using a duality argument, \cite{CFA2006} first obtained $L^2(0,T;H(\divg,\Omega)^\prime)$ a posteriori estimates of the flux variable and $L^\infty(0,T;L^2(\Omega))$ estimates of the potential in mixed methods for the heat equation. Using the idea of elliptic reconstruction proposed by \cite{Makridakis2003}, the works \cite{Larson2011,Memon2012} presented $L^2(0,T;L^2(\Omega))$- and $L^\infty(0,T;L^2(\Omega))$-type a posteriori error estimates of the flux variable in mixed methods for the heat equation. However,
there is no proof that the estimators proposed in \cite{CFA2006,Larson2011,Memon2012} provide lower bounds of the discretization error.
On the other hand,  \cite{Ern2010} presented an equilibrated estimator with a lower bound for the error in post-processed potential based on the $L^2(0,T;H^1(\Omega))\cap H^1(0,T;H^{-1}(\Omega))$-norm. Their estimator does not control the error in the flux variable. Comparing to the aforementioned error estimators, a posteriori analysis in this paper indeed yields a new estimator for the mixed discretization of the heat equation that is both an upper and lower bound of the space time error in the natural norm, see Section \ref{seclower} for details.

The rest of this paper is organized as follows. In Section \ref{secenergy}, we present preliminaries and derive energy estimate for the three field formulation of Biot's consolidation model. Section \ref{secsemi} is devoted to a posteriori error estimates of a semi-discrete scheme \eqref{semidis}. In Section \ref{secfully}, we develop a posteriori error estimator of the fully discrete scheme \eqref{fully} and prove its reliability. In Section \ref{seclower}, we show that the error estimators are lower bounds of the space-time error and present a posteriori estimates of mixed methods for the heat equation. In Section \ref{secexp}, we present numerical experiments validating our theoretical results. Section \ref{secconc} is for concluding remarks.

\section{Preliminaries and Energy estimates}\label{secenergy}
Given a $\mathbb{R}^d$-valued function $\bm{u}$, the symmetric gradient $\bm{\varepsilon}$ and stress tensor $\bm{\sigma}$ are
$$\bm{\varepsilon}(\bm{u}):=\frac{1}{2}(\nabla\bm{u}+(\nabla\bm{u})^T),\quad\bm{\sigma}(\bm{u}):=2\mu\bm{\varepsilon}(\bm{u})+\lambda(\divg\bm{u})\bm{I},$$
where $\mu>0, \lambda>0$ are Lam\'e coefficients, $\bm{I}$ is the $d\times d$ identity matrix. Let $\Omega$ be a Lipschiz domain in $\mathbb{R}^d$ and $T>0$ be the final time. The three-field formulation of the Biot's consolidation model reads
\begin{subequations}\label{Biot}
    \begin{align}
    -\divg\bm{\sigma}(\bm{u})+\alpha\nabla p&=\bm{f}\text{ in }\Omega\times(0,T],\label{Biot1}\\
    \partial_t(\beta p+\alpha\divg\bm{u})+\divg\bm{w}&=g\text{ in }\Omega\times(0,T],\label{Biot2}\\
   \bm{K}^{-1}\bm{w}+\nabla p&=\bm{0}\text{ in }\Omega\times(0,T],\label{Biot3}
\end{align}
\end{subequations}
subject to the initial condition $\bm{u}(0)=\bm{u}_0$, $p(0)=p_0$ in $\Omega.$
For the simplicity of presentation, we consider homogeneous boundary conditions
\begin{align*}
    \bm{u}&=\bm{0}\text{ on }\Gamma_1\times(0,T],\quad\bm{\sigma}(\bm{u})\bm{n}=0\text{ on }\Gamma_2\times(0,T],\\
    p&=0\text{ on }\Gamma_2\times(0,T],\quad(\bm{K}\nabla p)\cdot\bm{n}=0\text{ on }\Gamma_1\times(0,T],
\end{align*}
where $\partial\Omega=\Gamma_1\cup\Gamma_2,$  $\Gamma_1\cap\Gamma_2=\emptyset,$ and $\bm{n}$ denotes the outward unit normal to $\partial\Omega.$ Note that the Neumann boundary condition for $p$ on $\Gamma_1$ imposes an essential boundary condition for $\bm{w}$ on $\Gamma_1$. In addition, we assume $\alpha$, $\beta$ are constants and $\bm{K}=\bm{K}(\bm{x})$ is a  time-independent and uniformly elliptic matrix-valued function, i.e.,
\begin{equation*}
    C_1|\bm{\xi}|^2\leq\bm{\xi}^T\bm{K}(\bm{x})\bm{\xi}\leq C_2|\bm{\xi}|^2\text{ for all }\bm{\xi}\in\mathbb{R}^d\text{ and }\bm{x}\in\Omega,
\end{equation*}
where $C_1, C_2$ are positive constants. We introduce function spaces where we seek a weak solution to the system given in~\eqref{Biot}:
\begin{align*}
    \bm{V}&=\{\bm{v}\in [H^1(\Omega)]^d: \bm{v}=0\text{ on }\Gamma_1\},\quad
    Q=L^2(\Omega),\\ \bm{W}&=\{\bm{w}\in [L^2(\Omega)]^d: \divg\bm{w}\in L^2(\Omega),~ \bm{w}\cdot\bm{n}=0\text{ on }\Gamma_1\}.
\end{align*}
Let $(\cdot,\cdot)$ denote the $L^2(\Omega)$ inner product for scalar-, vector-, or matrix-valued functions. Next, we introduce several bilinear forms:
\begin{align*}
    a(\bm{u},\bm{v})&:=(\bm{\sigma}(\bm{u}),\bm{\varepsilon}(\bm{v})),\quad b(\bm{v},q):=(\alpha\divg\bm{v},p),\\
    c(p,q)&:=(\beta p,q),\quad d(\bm{z},q):=(\divg\bm{z},q),\quad e(\bm{w},\bm{z}):=(\bm{K}^{-1}\bm{w},\bm{z}).
\end{align*}
The norms associated with the bilinear forms given above are
\begin{align*}
\|\bm{v}\|^2_a&:=a(\bm{v},\bm{v}),\quad\|q\|_c^2:=c(q,q),\\
\|\bm{z}\|_e^2&:=e(\bm{z},\bm{z}),\quad
    \|\bm{z}\|_{\bm{W}}^2:=\|\bm{z}\|_e^2+\|\divg\bm{z}\|,
\end{align*}
where $\|\cdot\|$ denotes the $L^2(\Omega)$ norm.
For the spaces defined earlier we have the following correspondence with the norms: $\bm{V}$ is equipped with the  $\|\cdot\|_a$-norm, $Q$ is equipped with $\|\cdot\|_c$-norm, and $\bm{W}$ is equipped with the $\bm{W}$-norm. Because we are dealing with a time-dependent problem, we need the spaces of Hilbert-valued
functions as follows:
Given a Hilbert space $H$, we define
\begin{align*}
L^\infty(0,T;H)&=\{v: v(t)\in H\text{ for }t\in T,~{\text{ess}\sup}_{0\leq t\leq T}\|v(t)\|_{H}<\infty\},\\
L^2(0,T;H)&=\{v: v(t)\in H\text{ for }t\in T,~\int_0^T\|v(t)\|_{H}^2dt<\infty\},\\
    H^1(0,T;H)&=\{v\in L^2(0,T;H): \partial_tv\in L^2(0,T;H)\},
\end{align*}
see, e.g., \cite{Evans2010} for more details.
The variational formulation of \eqref{Biot} then is to find $\bm{u}\in H^1(0,T;\bm{V})$, $p\in H^1(0,T;Q)$, and $\bm{w}\in L^2(0,T;\bm{W})$ such that $\bm{u}(0)=u_0$, $p(0)=p_0$ and
\begin{subequations}\label{var}
\begin{align}
    a(\bm{u},\bm{v})-b(\bm{v}, p)&=(\bm{f},\bm{v}),\\
    c(\partial_tp,q)+b(\partial_t\bm{u},q)+d(\bm{w},q)&=(g,q),\\
    e(\bm{w},\bm{z})-d(\bm{z},p)&=0
\end{align}
\end{subequations}
for all $\bm{v}\in\bm{V},  q\in Q$, and $\bm{z}\in\bm{W}$ a.e.~$t\in(0,T]$. It can be observed that \eqref{var} with $\bm{u}=\bm{v}=\bm{0}$ reduces to the mixed formulation of the heat equation or time-dependent Darcy flow:
\begin{subequations}\label{heat}
\begin{align}
    c(\partial_tp,q)+d(\bm{w},q)&=(g,q),\quad q\in Q,\\
    e(\bm{w},\bm{z})-d(\bm{z},p)&=0,\quad\bm{z}\in\bm{W}.
\end{align}
\end{subequations}

In the rest of this section, we establish an
energy estimate of \eqref{var} which is the main tool for deriving a posteriori error estimates. The well-posedness of two-field formulation can be found in e.g., \cite{Zen1984,Showalter2000}. For the three-field formulation we have the following result.
\begin{theorem}\label{existence}
Let $u_0\in\bm{V}$, $p_0\in Q$, $\bm{f}\in H^1(0,T;\bm{V}^\prime)$, and $g\in L^2(0,T;Q)$. Then the variational formulation \eqref{var} admits a unique weak solution
$$(\bm{u},p,\bm{w})\in H^1(0,T;\bm{V})\times H^1(0,T;Q)\times L^2(0,T;\bm{W}).$$
\end{theorem}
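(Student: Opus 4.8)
The plan is to reduce the coupled system \eqref{var} to a two-field mixed parabolic problem in $(p,\bm w)$ of the same type as the mixed heat equation \eqref{heat}, to solve that reduced problem by a Faedo--Galerkin construction together with an energy estimate, and then to recover $\bm u$; uniqueness will follow from linearity and the same energy estimate. First I would eliminate $\bm u$. Since $a(\cdot,\cdot)$ is bounded and, by Korn's inequality on $\bm V$ (which enforces $\bm v=\bm 0$ on $\Gamma_1$), coercive, Lax--Milgram defines bounded linear operators $S\colon Q\to\bm V$ and $R\colon\bm{V}^\prime\to\bm V$ via $a(Sq,\bm v)=b(\bm v,q)$ and $a(R\bm\phi,\bm v)=\langle\bm\phi,\bm v\rangle$ for all $\bm v\in\bm V$. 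The first equation of \eqref{var} then becomes $\bm u=Sp+R\bm f$, so $\bm u\in H^1(0,T;\bm V)$ as soon as $p\in H^1(0,T;Q)$, with $\partial_t\bm u=S(\partial_tp)+R(\partial_t\bm f)$. Substituting this into the second equation and setting $\mathcal C(p,q):=c(p,q)+b(Sp,q)$, $\tilde g:=g-\alpha\,\divg R(\partial_t\bm f)\in L^2(0,T;Q)$, the system reduces to: find $p\in H^1(0,T;Q)$, $\bm w\in L^2(0,T;\bm W)$ with $p(0)=p_0$ such that
\begin{align*}
\mathcal C(\partial_tp,q)+d(\bm w,q)&=(\tilde g,q)\quad\text{for all }q\in Q,\\
e(\bm w,\bm z)-d(\bm z,p)&=0\quad\text{for all }\bm z\in\bm W,
\end{align*}
a.e.\ in $(0,T]$. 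The key point is that $\mathcal C$ is symmetric: testing $a(Sq,\cdot)=b(\cdot,q)$ with $Sp$ and using symmetry of $a$ gives $b(Sp,q)=a(Sq,Sp)=a(Sp,Sq)=b(Sq,p)$, so $\mathcal C(p,q)=c(p,q)+a(Sp,Sq)$ is symmetric and, since $\beta>0$, coercive on $Q$; it defines an inner product equivalent to $(\cdot,\cdot)$, and I set $\|q\|_{\mathcal C}^2:=\mathcal C(q,q)$.

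Next I would solve the reduced system. It has the same structure as \eqref{heat} with $c$ replaced by $\mathcal C$, so either one cites existing well-posedness for mixed parabolic problems (or semigroup/maximal-regularity theory), or one argues directly by Faedo--Galerkin: pick finite-dimensional subspaces $Q_n\subset Q$, $\bm W_n\subset\bm W$ whose unions are dense; coercivity of $e$ determines $\bm w_n$ from $p_n$ through the second equation, and substitution into the first gives a linear ODE $\mathcal C_n\dot{\vec p}_n+A_n\vec p_n=\vec b_n(t)$ with $\mathcal C_n$ symmetric positive definite and $A_n$ symmetric positive semidefinite, hence uniquely solvable on $[0,T]$. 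The a priori bounds come from the energy argument. Testing the first equation with $p_n$, the second with $\bm w_n$, and adding gives $\tfrac12\tfrac{d}{dt}\|p_n\|_{\mathcal C}^2+\|\bm w_n\|_e^2=(\tilde g,p_n)$, whence (Gronwall) $p_n$ is bounded in $L^\infty(0,T;Q)$ and $\bm w_n$ in $L^2(0,T;L^2)$. Testing the first equation with $\partial_tp_n$ and the time-derivative of the second with $\bm w_n$ (so that $d(\bm w_n,\partial_tp_n)=\tfrac12\tfrac{d}{dt}\|\bm w_n\|_e^2$), then absorbing $(\tilde g,\partial_tp_n)$ by Young's inequality, bounds $\int_0^T\|\partial_tp_n\|_{\mathcal C}^2\,dt+\sup_{[0,T]}\|\bm w_n\|_e^2$ by $\|\bm w_n(0)\|_e^2+\|\tilde g\|_{L^2(0,T;Q)}^2$; finally the first equation reads $d(\bm w_n,q)=(\tilde g,q)-\mathcal C(\partial_tp_n,q)$, which bounds $\|\divg\bm w_n\|_{L^2(0,T;Q)}$ and hence $\|\bm w_n\|_{L^2(0,T;\bm W)}$. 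Extracting weakly convergent subsequences and passing to the limit (using density of $\bigcup_nQ_n$, $\bigcup_n\bm W_n$ and the trace $p_n(0)\rightharpoonup p(0)$) yields $(p,\bm w)\in H^1(0,T;Q)\times L^2(0,T;\bm W)$ solving the reduced system with $p(0)=p_0$; setting $\bm u=Sp+R\bm f$ completes existence.

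For uniqueness, by linearity it suffices to treat zero data: testing the first equation of \eqref{var} with $\partial_t\bm u$, the second with $p$, the third with $\bm w$, and adding, the coupling terms $b(\partial_t\bm u,p)$ and $d(\bm w,p)$ cancel and leave $\tfrac12\tfrac{d}{dt}\big(\|\bm u\|_a^2+\|p\|_c^2\big)+\|\bm w\|_e^2=0$, which with $\bm u(0)=\bm 0$, $p(0)=0$ forces $\bm u\equiv\bm 0$, $p\equiv0$, and then the third equation gives $\bm w\equiv\bm 0$. The hard part is the $H^1$-in-time regularity of $p$ --- equivalently $\divg\bm w\in L^2(0,T;Q)$ --- since the enhanced bound above involves $\|\bm w_n(0)\|_e$, that is, a uniformly bounded \emph{compatible initial flux} $\bm w_0\in\bm W$ with $e(\bm w_0,\bm z)=d(\bm z,p_0)$; writing $d(\bm z,p_0)=-(\bm z,\nabla p_0)$ shows this needs more spatial regularity of $p_0$ than $p_0\in Q$ alone (e.g.\ $p_0\in H^1(\Omega)$ vanishing on $\Gamma_2$), a compatibility that must be carried through the argument. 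The other point requiring care is verifying the cancellation of the $b$-coupling terms when the three equations are combined --- the algebraic identity underlying both this energy estimate and the a posteriori analysis of the later sections.
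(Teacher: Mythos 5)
Your argument is correct and is, at bottom, the same ``Galerkin method in space plus energy estimates'' that the paper invokes (it skips the details, pointing to Lemma \ref{energy} and the standard parabolic argument of Evans). The organization differs in a way worth noting. The paper's Lemma \ref{energy} works on the coupled three-field system directly: it tests with $\bm{v}=\partial_t\tilde{\bm{u}}$, $q=\tilde{p}$, $\bm{z}=\tilde{\bm{w}}$ and separately differentiates the two elliptic equations in time, so a Galerkin construction following that route would discretize all three fields simultaneously with a compatible triple of subspaces. You instead eliminate $\bm{u}=Sp+R\bm{f}$ through the Lax--Milgram solution operator of $a(\cdot,\cdot)$ and run Faedo--Galerkin on the resulting two-field mixed parabolic problem with the augmented form $\mathcal{C}(p,q)=c(p,q)+a(Sp,Sq)$; the symmetry and coercivity of $\mathcal{C}$ that you verify is exactly what makes the reduction legitimate, and it buys a cleaner ODE system and the option of quoting off-the-shelf well-posedness for mixed parabolic problems. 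The resulting a priori bounds are the same: your estimate on $\|\partial_tp_n\|_{\mathcal{C}}$ controls $\|\partial_t\bm{u}\|_a$ through $\partial_t\bm{u}=S\partial_tp+R\partial_t\bm{f}$, matching the quantities appearing in the paper's $\|(\cdot,\cdot,\cdot)\|_{L^2(0,t;X)}$-norm, and your uniqueness identity is precisely the paper's \eqref{id1} with zero data.

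Your closing remark about the initial data is on target and is the one place where you go beyond what the paper records. The enhanced estimate needs a compatible initial flux $\bm{w}_0\in\bm{W}$ with $e(\bm{w}_0,\bm{z})=d(\bm{z},p_0)$, uniformly realizable in the Galerkin spaces only if $p_0$ has enough spatial regularity to write $d(\bm{z},p_0)=-(\bm{z},\nabla p_0)$; consistently, the right-hand side of Lemma \ref{energy} contains $\|\tilde{\bm{w}}(0)\|_e^2$, so the hypothesis $p_0\in Q$ in the theorem statement is implicitly strengthened whenever the full conclusion $p\in H^1(0,T;Q)$, $\bm{w}\in L^2(0,T;\bm{W})$ is wanted. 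Relatedly, in your construction $\bm{u}(0)=Sp_0+R\bm{f}(0)$ is determined by the elliptic equation at $t=0$, so the prescribed datum $\bm{u}_0$ is not free but must equal this value --- a compatibility condition inherent in the quasi-static system that is worth stating alongside the one on $p_0$.
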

We skip the proof of Theorem \ref{existence} as it directly follows from the energy estimates in Lemma \ref{energy} and a standard argument using a Galerkin method in space, in the same fashion as for the linear parabolic equation (see, e.g., \cite{Evans2010}).
For the purpose of a posteriori error estimation, we consider a more general variational problem: Find $\tilde{\bm{u}}\in H^1(0,T;\bm{V})$, $\tilde{p}\in H^1(0,T;Q)$, $\tilde{\bm{w}}\in L^2(0,T;\bm{W})$, such that
\begin{subequations}\label{Gvar}
    \begin{align}
    a(\tilde{\bm{u}},\bm{v})-b(\bm{v}, \tilde{p})&=\ab{\bm{F}_1}{\bm{v}},\quad\bm{v}\in\bm{V},\label{Gvar1}\\
    c(\partial_t\tilde{p},q)+b(\partial_t\tilde{\bm{u}},q)+d(\tilde{\bm{w}},q)&=\ab{F_2}{q},\quad q\in Q,\label{Gvar2}\\
    e(\tilde{\bm{w}},\bm{z})-d(\bm{z},\tilde{p})&=\ab{\bm{F}_3}{\bm{z}},\quad\bm{z}\in\bm{W},\label{Gvar3}
\end{align}
\end{subequations}
where $\bm{F}_1\in H^1(0,T;\bm{V}^\prime)$,  ${F}_2\in L^2(0,T;Q^\prime)$, $\bm{F}_3\in H^1(0,T;\bm{W}^\prime)$ are time-dependent bounded linear functionals living in dual spaces.
At each time $t\in[0,T]$, the dual norms are given by
\begin{align*}
    \|\bm{F}_1\|_{\prime}=\|\bm{F}_1\|_{\bm{V}^\prime}&:=\sup_{\bm{v}\in\bm{V},\|\bm{v}\|_a=1}\ab{\bm{F}_1}{\bm{v}},\\
    \|{F}_2\|_{\prime}=\|{F}_2\|_{Q^\prime}&:=\sup_{q\in Q,\|q\|_c=1}\ab{\bm{F}_2}{q},\\
    \|\bm{F}_3\|_{\prime}=\|\bm{F}_3\|_{\bm{W}^\prime}&:=\sup_{\bm{z}\in\bm{W},\|\bm{z}\|_{\bm{W}}=1}\ab{\bm{F}_3}{\bm{z}}.
\end{align*}
Norms of $\partial_t\bm{F}_1\in\bm{V}^\prime$ and $\partial_t\bm{F}_3\in\bm{W}^\prime$ are defined in a similar fashion.
Given $t\in[0,T]$ and an interval $I\subseteq[0,T],$  we make use of the norms
\begin{align*}
    \vertiii{(\tilde{\bm{u}},\tilde{p},\tilde{\bm{w}})(t)}^2&:=\|\tilde{\bm{u}}(t)\|_a^2+\|\tilde{p}(t)\|_c^2+\|\tilde{\bm{w}}(t)\|_e^2,\\
    \|(\tilde{\bm{u}},\tilde{p},\tilde{\bm{w}})\|^2_{L^2(I;X)}&:=\int_I\big(\|\tilde{\bm{u}}\|_a^2+\|\partial_t\tilde{\bm{u}}\|_a^2+\|\tilde{p}\|_c^2+\|\partial_t\tilde{p}\|_c^2+\|\tilde{\bm{w}}\|_{\bm{W}}^2+{\|\partial_t\tilde{\bm{w}}\|_{\bm{W}^\prime}^2}\big)ds.
\end{align*}
The following energy estimate is crucial to a posteriori error estimation of numerical methods for \eqref{var}.
\begin{lemma}\label{energy}
There exists a constant $C_{\text{stab}}$ dependent only on $\mu$, $\alpha,$ $\beta$, $\bm{K}$, $\Omega$ such that for all $t\in(0,T]$,
\begin{equation*}
    \begin{aligned}
    &\vertiii{(\tilde{\bm{u}},\tilde{p},\tilde{\bm{w}})(t)}^2+\|(\tilde{\bm{u}},\tilde{p},\tilde{\bm{w}})\|^2_{L^2(0,t;X)}\leq C_{\text{stab}}\big\{\vertiii{(\tilde{\bm{u}},\tilde{p},\tilde{\bm{w}})(0)}^2\\
    &+\left(\int_0^t\|F_2\|_{\prime}ds\right)^2+\int_0^t\big(\|\bm{F}_1\|_{\prime}^2+\|\partial_t\bm{F}_1\|_{\prime}^2+\|F_2\|_{\prime}^2+\|\bm{F}_3\|_{\prime}^2+\|\partial_t\bm{F}_3\|_{\prime}^2\big)ds\big\}.
    \end{aligned}
\end{equation*}
\end{lemma}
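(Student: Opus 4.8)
The plan is to run two coupled energy identities, obtained by testing \eqref{Gvar} with carefully chosen test functions, and then to close the resulting bounds with an $H(\divg)$-estimate for $\tilde{\bm w}$ and an absorption argument. No Gr\"onwall inequality is needed because, as will be seen, every cross term cancels.

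\emph{First identity.} Test \eqref{Gvar1} with $\bm v=\partial_t\tilde{\bm u}$, \eqref{Gvar2} with $q=\tilde p$, and \eqref{Gvar3} with $\bm z=\tilde{\bm w}$, then add the three equations. The mixed terms $\mp b(\partial_t\tilde{\bm u},\tilde p)$ and $\mp d(\tilde{\bm w},\tilde p)$ cancel, and since $a(\tilde{\bm u},\partial_t\tilde{\bm u})=\tfrac12\tfrac{d}{dt}\|\tilde{\bm u}\|_a^2$, $c(\partial_t\tilde p,\tilde p)=\tfrac12\tfrac{d}{dt}\|\tilde p\|_c^2$, this yields
\[
\tfrac12\tfrac{d}{dt}\big(\|\tilde{\bm u}\|_a^2+\|\tilde p\|_c^2\big)+\|\tilde{\bm w}\|_e^2=\ab{\bm F_1}{\partial_t\tilde{\bm u}}+\ab{F_2}{\tilde p}+\ab{\bm F_3}{\tilde{\bm w}}.
\]
Integrating from $0$ to $\tau$ and taking $\sup_{\tau\le t}$ controls $\sup_{\tau\le t}\big(\|\tilde{\bm u}(\tau)\|_a^2+\|\tilde p(\tau)\|_c^2\big)+\int_0^t\|\tilde{\bm w}\|_e^2\,ds$. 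On the right, $\ab{\bm F_1}{\partial_t\tilde{\bm u}}$ and $\ab{\bm F_3}{\tilde{\bm w}}$ are handled by Cauchy--Schwarz and Young against $\int_0^t\|\partial_t\tilde{\bm u}\|_a^2$ and $\int_0^t\|\tilde{\bm w}\|_{\bm W}^2$ (both postponed to the next steps), whereas $\ab{F_2}{\tilde p}\le\|F_2\|_\prime\|\tilde p\|_c$ is bounded by $\big(\int_0^t\|F_2\|_\prime\,ds\big)\sup_{\tau\le t}\|\tilde p(\tau)\|_c$ followed by Young's inequality --- this is exactly the origin of the $\big(\int_0^t\|F_2\|_\prime\,ds\big)^2$ term of the lemma, the accompanying $\varepsilon\sup_{\tau\le t}\|\tilde p(\tau)\|_c^2$ being absorbed on the left.

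\emph{Second identity.} Differentiate \eqref{Gvar1} and \eqref{Gvar3} in time (licit since $\bm F_1\in H^1(0,T;\bm V^\prime)$ and $\bm F_3\in H^1(0,T;\bm W^\prime)$), and test the differentiated \eqref{Gvar1} with $\bm v=\partial_t\tilde{\bm u}$, \eqref{Gvar2} with $q=\partial_t\tilde p$, and the differentiated \eqref{Gvar3} with $\bm z=\tilde{\bm w}$. Again all cross terms cancel, and using $e(\partial_t\tilde{\bm w},\tilde{\bm w})=\tfrac12\tfrac{d}{dt}\|\tilde{\bm w}\|_e^2$ (a Lions--Magenes type duality identity, legitimate once $\partial_t\tilde{\bm w}\in L^2(0,T;\bm W^\prime)$ is established, see below) one obtains
\[
\|\partial_t\tilde{\bm u}\|_a^2+\|\partial_t\tilde p\|_c^2+\tfrac12\tfrac{d}{dt}\|\tilde{\bm w}\|_e^2=\ab{\partial_t\bm F_1}{\partial_t\tilde{\bm u}}+\ab{F_2}{\partial_t\tilde p}+\ab{\partial_t\bm F_3}{\tilde{\bm w}}.
\]
Integration and Young's inequality bound $\int_0^t\big(\|\partial_t\tilde{\bm u}\|_a^2+\|\partial_t\tilde p\|_c^2\big)\,ds+\sup_{\tau\le t}\|\tilde{\bm w}(\tau)\|_e^2$ by $\|\tilde{\bm w}(0)\|_e^2$, the data $\int_0^t\big(\|\partial_t\bm F_1\|_\prime^2+\|F_2\|_\prime^2+\|\partial_t\bm F_3\|_\prime^2\big)\,ds$, and a small multiple of $\int_0^t\|\tilde{\bm w}\|_{\bm W}^2\,ds$ arising from $\ab{\partial_t\bm F_3}{\tilde{\bm w}}\le\|\partial_t\bm F_3\|_\prime\|\tilde{\bm w}\|_{\bm W}$.

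\emph{Divergence / dual-norm bounds and closing.} Taking $q=\divg\tilde{\bm w}$ in \eqref{Gvar2} and using $\|\divg\bm v\|\le\sqrt{d/(2\mu)}\,\|\bm v\|_a$ gives the pointwise bound $\|\divg\tilde{\bm w}\|\lesssim\|F_2\|_\prime+\|\partial_t\tilde p\|_c+\|\partial_t\tilde{\bm u}\|_a$; similarly the time-differentiated \eqref{Gvar3}, i.e.\ $e(\partial_t\tilde{\bm w},\bm z)=d(\bm z,\partial_t\tilde p)+\ab{\partial_t\bm F_3}{\bm z}$, yields $\|\partial_t\tilde{\bm w}\|_{\bm W^\prime}\lesssim\|\partial_t\tilde p\|_c+\|\partial_t\bm F_3\|_\prime$, which in particular shows $\partial_t\tilde{\bm w}\in L^2(0,T;\bm W^\prime)$ and thereby justifies the duality identity used above. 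Assembling the three estimates, the small multiples of $\int_0^t\|\tilde{\bm w}\|_{\bm W}^2$, $\int_0^t\|\partial_t\tilde{\bm u}\|_a^2$ and $\sup_{\tau\le t}\|\tilde p(\tau)\|_c^2$ produced on the various right-hand sides are absorbed into the left (this requires choosing all the $\varepsilon$'s compatibly), and the undifferentiated $L^2$-in-time norms $\int_0^t\|\tilde{\bm u}\|_a^2$, $\int_0^t\|\tilde p\|_c^2$ are dominated by $T$ times the corresponding sup-in-time bounds already obtained. This gives the asserted inequality with $C_{\text{stab}}$ depending only on $\mu,\alpha,\beta,\bm K,\Omega$ (with the only $T$-dependence entering through those lower-order terms). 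The part I expect to be most delicate is precisely the treatment of $\tilde{\bm w}$ in $H(\divg,\Omega)$ and of $\partial_t\tilde{\bm w}$ in the dual norm --- establishing $\partial_t\tilde{\bm w}\in L^2(0,T;\bm W^\prime)$ from \eqref{Gvar3} before the identity for $\tfrac{d}{dt}\|\tilde{\bm w}\|_e^2$ is meaningful, and the bookkeeping that keeps every absorbed constant genuinely small.
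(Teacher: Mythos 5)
Your two energy identities, the $L^\infty$-in-time treatment of the $\ab{F_2}{\tilde p}$ term (which is exactly where the $\big(\int_0^t\|F_2\|_\prime\,ds\big)^2$ term comes from), the bound on $\|\divg\tilde{\bm w}\|$ obtained by testing \eqref{Gvar2} with $q=\divg\tilde{\bm w}$, and the dual-norm bound on $\partial_t\tilde{\bm w}$ from the time-differentiated \eqref{Gvar3} all coincide with the paper's argument, including the absorption bookkeeping with a small $\delta$ in front of $\|\divg\tilde{\bm w}\|^2$. The one place where your proof deviates --- and where it falls short of the lemma as stated --- is the closing step for the undifferentiated terms $\int_0^t\|\tilde{\bm u}\|_a^2\,ds$ and $\int_0^t\|\tilde p\|_c^2\,ds$. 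You bound these by $T$ times the sup-in-time quantities already controlled, which makes the resulting constant grow linearly in $T$; but the lemma asserts that $C_{\text{stab}}$ depends only on $\mu$, $\alpha$, $\beta$, $\bm K$, $\Omega$, so this is not merely cosmetic, and your own parenthetical about ``$T$-dependence entering through those lower-order terms'' concedes the point.

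The paper closes this loop without invoking $T$: the inf-sup condition for $d(\cdot,\cdot)$ applied to \eqref{Gvar3} gives, at each fixed time, $\|\tilde p\|_c\leq C\big(\|\bm F_3\|_\prime+\|\tilde{\bm w}\|_e\big)$ (since $d(\bm z,\tilde p)=e(\tilde{\bm w},\bm z)-\ab{\bm F_3}{\bm z}$ for all $\bm z\in\bm W$), and then \eqref{Gvar1} with $\bm v=\tilde{\bm u}$ gives $\|\tilde{\bm u}\|_a\leq C\big(\|\bm F_1\|_\prime+\|\tilde p\|_c\big)$; together this is the pointwise-in-time estimate \eqref{Gvar13}. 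Squaring and integrating, $\int_0^t\big(\|\tilde{\bm u}\|_a^2+\|\tilde p\|_c^2\big)\,ds$ is then dominated by $\int_0^t\big(\|\bm F_1\|_\prime^2+\|\bm F_3\|_\prime^2\big)\,ds$ plus $\int_0^t\|\tilde{\bm w}\|_e^2\,ds$, and the latter already sits on the left-hand side of your first integrated identity. Substituting this step for your ``$T$ times the sup'' bound repairs the argument and delivers the $T$-independent constant; everything else in your proposal is sound and matches the paper.
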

\begin{proof}
Setting $\bm{v}=\partial_t\tilde{\bm{u}}$, $\bm{z}=\tilde{\bm{w}}$, $q=\tilde{p}$ in \eqref{Gvar} yields
\begin{equation}\label{id1}
    \frac{1}{2}\frac{d}{dt}\|\tilde{\bm{u}}\|_a^2+\frac{1}{2}\frac{d}{dt}\|\tilde{p}\|_c^2+\|\tilde{\bm{w}}\|_e^2=\ab{\bm{F}_1}{\partial_t\tilde{\bm{u}}}+\ab{F_2}{\tilde{p}}+\ab{\bm{F}_3}{\tilde{\bm{w}}}.
\end{equation}
On the other hand, differentiating \eqref{Gvar1} and \eqref{Gvar3} with respect to time $t$ gives
\begin{align*}
    a(\partial_t\tilde{\bm{u}},\bm{v})-b(\bm{v}, \partial_t\tilde{p})&=\ab{\partial_t\bm{F}_1}{\bm{v}},\\
    e(\partial_t\tilde{\bm{w}},\bm{z})-d(\bm{z},\partial_t\tilde{p})&=\ab{\partial_t\bm{F}_3}{\bm{z}}.
\end{align*}
Taking as test functions $\bm{v}=\partial_t\tilde{\bm{u}}$ and $\bm{z}=\tilde{\bm{w}}$ in the equations above and using \eqref{Gvar2} with $q=\partial_t\tilde{p}$ then leads to
\begin{equation}\label{id2}
    \|\partial_t\tilde{\bm{u}}\|_a^2+\|\partial_t\tilde{p}\|_c^2+\frac{1}{2}\frac{d}{dt}\|\tilde{\bm{w}}\|_e^2=\ab{\partial_t\bm{F}_1}{\partial_t\tilde{\bm{u}}}+\ab{F_2}{\partial_t\tilde{p}}+\ab{\partial_t\bm{F}_3}{\tilde{\bm{w}}}.
\end{equation}
Using \eqref{id1}, \eqref{id2}, the Cauchy--Schwarz and Young's inequalities, we obtain
\begin{align*}
    &\frac{1}{2}\frac{d}{dt}\|\tilde{\bm{u}}\|_a^2+\frac{1}{2}\frac{d}{dt}\|\tilde{p}\|_c^2+(1-\delta)\|\tilde{\bm{w}}\|_e^2+\frac{1}{2}\|\partial_t\tilde{\bm{u}}\|_a^2\\
    &\quad+\frac{1}{2}\|\partial_t\tilde{p}\|_c^2+\frac{1}{2}\frac{d}{dt}\|\tilde{\bm{w}}\|_e^2\leq G+\|F_2\|_\prime\|\tilde{p}\|_c+\delta\|\divg\tilde{\bm{w}}\|^2,
\end{align*}
where $\delta>0$ and
$$G=\|\bm{F}_1\|_{\prime}^2+\|\partial_t\bm{F}_1\|_{\prime}^2+\frac{1}{2}\|F_2\|_{\prime}^2+\frac{\delta^{-1}}{2}\|\bm{F}_3\|_{\prime}^2+\frac{\delta^{-1}}{2}\|\partial_t\bm{F}_3\|_{\prime}^2.$$
Integrating the previous inequality yields
\begin{equation}\label{est1}
    \begin{aligned}
    &\frac{1}{2}\vertiii{(\tilde{\bm{u}},\tilde{p},\tilde{\bm{w}})(t)}^2+\int_0^t\big(\frac{1}{2}\|\partial_t\tilde{\bm{u}}\|_a^2+\frac{1}{2}\|\partial_t\tilde{p}\|_c^2+(1-\delta)\|\tilde{\bm{w}}\|_e^2\big)ds\\
    &\quad\leq\frac{1}{2}\vertiii{(\tilde{\bm{u}},\tilde{p},\tilde{\bm{w}})(0)}^2+\int_0^t\big(G+\|F_2\|_{\prime}\|\tilde{p}\|_c+\delta\|\divg\tilde{\bm{w}}\|^2\big)ds.
\end{aligned}
\end{equation}
Recall that
$\|\tilde{p}\|_{L^\infty(0,t;Q)}:=\max_{0\leq s\leq t}\|\tilde{p}(s)\|_c.$
In particular, \eqref{est1} implies that
\begin{equation*}
    \begin{aligned}
    &\frac{1}{2}\|\tilde{p}(s)\|_c^2\leq\frac{1}{2}\vertiii{(\tilde{\bm{u}},\tilde{p},\tilde{\bm{w}})(0)}^2\\
    &\quad+\int_0^t\big(G+\delta\|\divg\tilde{\bm{w}}\|^2\big)ds+\|\tilde{p}\|_{L^\infty(0,t;Q)}\int_0^t\|F_2\|_{\prime}ds
\end{aligned}
\end{equation*}
for all $0\leq s\leq t$. Hence a combination of the previous estimate with
$$\|\tilde{p}\|_{L^\infty(0,t;Q)}\int_0^t\|F_2\|_{\prime}ds\leq\frac{1}{4}\|\tilde{p}\|^2_{L^\infty(0,t;Q)}+\left(\int_0^t\|F_2\|_{\prime}ds\right)^2$$ shows that
\begin{equation}\label{est2}
    \begin{aligned}
    &\frac{1}{4}\|\tilde{p}\|_{L^\infty(0,t;Q)}^2\leq\frac{1}{2}\vertiii{(\tilde{\bm{u}},\tilde{p},\tilde{\bm{w}})(0)}^2\\
    &\quad+\int_0^t\big(G+\delta\|\divg\tilde{\bm{w}}\|^2\big)ds+\left(\int_0^t\|F_2\|_\prime ds\right)^2.
\end{aligned}
\end{equation}
Using \eqref{est1} and \eqref{est2} and a Young's inequality, we obtain
\begin{equation}\label{est3}
    \begin{aligned}
    &\frac{1}{2}\vertiii{(\tilde{\bm{u}},\tilde{p},\tilde{\bm{w}})(t)}^2+\int_0^t\big(\frac{1}{2}\|\partial_t\tilde{\bm{u}}\|_a^2+\frac{1}{2}\|\partial_t\tilde{p}\|_c^2+(1-\delta)\|\tilde{\bm{w}}\|_e^2\big)ds\\
    &\quad\leq\vertiii{(\tilde{\bm{u}},\tilde{p},\tilde{\bm{w}})(0)}^2+2\int_0^t\big(G+\delta\|\divg\tilde{\bm{w}}\|^2\big)ds+2\left(\int_0^t\|F_2\|_\prime ds\right)^2,
\end{aligned}
\end{equation}
Let $C$ be a generic constant dependent only on $\alpha,$ $\beta$, $\mu$, $\Omega$. It follows from \eqref{Gvar2} with $q=\divg\tilde{\bm{w}}$ that
\begin{equation}\label{est4}
 \|\divg\tilde{\bm{w}}\|^2\leq C\big(\|F_2\|^2_\prime+\|\partial_t\tilde{p}\|^2_c+\|\partial_t\tilde{\bm{u}}\|^2_a\big).
\end{equation}
Taking the derivative with respect to time on both sides of \eqref{Gvar3} shows that
{\begin{equation}\label{extra1}
 \|\partial_t\tilde{\bm{w}}\|_{\bm{W}^\prime}\leq C\big(\|\partial_t\bm{F}_3\|_\prime+\|\partial_t\tilde{p}\|_c\big).
\end{equation}}
The inf-sup condition for $d(\cdot,\cdot)$ together with \eqref{Gvar3} and \eqref{Gvar1} then imply the following inequality:
\begin{equation}\label{Gvar13}
\|\tilde{p}\|_c+\|\tilde{\bm{u}}\|_a\leq C\big(\|\bm{F}_1\|_\prime+\|\bm{F}_3\|_\prime+\|\tilde{\bm{w}}\|_e\big).
\end{equation}
Choosing a sufficiently small $\delta>0$ and combining \eqref{est3}
and \eqref{est4}--\eqref{Gvar13} completes the proof of the lemma.
\end{proof}

\section{Error estimator for the semi-discrete problem}\label{secsemi}
Let $\Th$ be a conforming simplicial triangulation of $\Omega$ that is aligned with $\Gamma_1$ and $\Gamma_2$. The mesh $\Th$ is shape-regular in the sense that
$$\max_{K\in\Th}\frac{r_K}{\rho_K}:=\widetilde{C}_{\text{shape}}<\infty,$$
where $r_K$, $\rho_K$ are radii of circumscribed and inscribed spheres of $K$. Let $\bm{V}_h\subset\bm{V}$, $\bm{W}_h\subset\bm{W}$, $Q_h\subset Q$ be suitable finite element spaces based on $\Th$. In particular, we choose
$\bm{V}_h\times Q_h$ to be
a stable mixed element pair for the Stokes equation, and $\bm{W}_h\times Q_h$
to be a stable mixed element pair for the mixed formulation of Poisson's equation. It has been shown in e.g., \cite{HongKraus2018,RHOAGZ2018} that this choice leads to stable space discretization. For example, $\bm{V}_h\times Q_h$ can be chosen to be the $(P_1+\text{face bubble functions})\times P_0$ element  (see \cite{GR1986}) and $\bm{W}_h\times Q_h$ can be the lowest order Raviart--Thomas (see \cite{RT1977}) or Brezzi--Douglas--Marini element (see \cite{BDM1985}). Let $\mathcal{F}(\Th)$ denote the collection of faces in $\Th$ and $\bm{n}_F$ be a unit normal to $F$ for any face $F\in\mathcal{F}(\Th)$. Let $\mathcal{P}_k(K)$ denote the space of polynomials of degree no greater than $k$ on $K,$ and
\begin{align*}
    \bm{V}_{h,l}&=\{\bm{v}\in\bm{V}: \bm{v}|_K\in[\mathcal{P}_1(K)]^d\text{ for all }K\in\Th\},\\
    \bm{B}_h&=\{\bm{v}\in\bm{V}: \bm{v}|_K\in\text{span}\{\phi_F\bm{n}_F\}_{F\subset\partial K, F\in\mathcal{F}(\mathcal{T}_h)}\text{ for all }K\in\Th\}.
\end{align*}
Here, $\phi_F$ is the face bubble function supported on union of elements having $F\in\mathcal{F}(\Th)$ as a face, i.e.,
\(\phi_F=\prod_{z_j\in F}\lambda_j\) where $\lambda_j$ is the barycentric coordinate corresponding to the vertex $z_j$ in the face $F$.
The triple $\bm{V}_h\times Q_h\times\bm{W}_h$ can be chosen as $\bm{V}^0_h\times Q^0_h\times\bm{W}^0_h$, where
\begin{align*}
    {\bm{V}}^0_h&=\bm{V}_{h,l}\oplus\bm{B}_h,\\
    Q^0_h&=\{q\in L^2(\Omega): q|_K\in\mathcal{P}_0(K)\text{ for all }K\in\Th\},\\
    \bm{W}^0_h&=\{\bm{z}\in\bm{W}: \bm{z}|_K\in[\mathcal{P}_0(K)]^d+\mathcal{P}_0(K)\bm{x}\text{ for all }K\in\Th\}.
\end{align*}
Here $\bm{x}=(x_1,x_2,\ldots,x_d)^T$ is the linear position vector. In general, we assume the inclusion $\bm{W}_h^0\subseteq\bm{W}_h.$

The semi-discrete version of \eqref{var} is to find $\bm{u}_h\in H^1(0,T;\bm{V}_h)$, $p_h\in H^1(0,T;Q_h)$, and $\bm{w}_h\in L^2(0,T;\bm{W}_h)$ such that $\bm{u}_h(0)=u^0_{h}$, $p_h(0)=p^0_{h}$ and
\begin{subequations}\label{semidis}
    \begin{align}
    a(\bm{u}_h,\bm{v})-b(\bm{v}, p_h)&=(\bm{f},\bm{v}),\quad\bm{v}\in\bm{V}_h,\label{semidis1}\\
    c(\partial_tp_{h},q)+b(\partial_t\bm{u}_{h},q)+d(\bm{w}_h,q)&=(g,q),\quad q\in Q_h,\\
    e(\bm{w}_h,\bm{z})-d(\bm{z},p_h)&=0,\quad\bm{z}\in\bm{W}_h.\label{semidis3}
\end{align}
\end{subequations}
Here $u^0_{h}\in\bm{V}_h, p^0_{h}\in Q_h$ are some finite element approximation to $u_0$ and $p_0$.
In this section, we derive a posteriori error estimation for the semi-discrete method \eqref{semidis}. To this end, let \begin{equation}\label{error1}
    e_u=\bm{u}-\bm{u}_h,\quad e_p=p-p_h,\quad e_w=\bm{w}-\bm{w}_h.
\end{equation} It follows from \eqref{var} that the errors satisfy
\begin{subequations}\label{semierr}
    \begin{align}
    a(e_u,\bm{v})-b(\bm{v}, e_p)&=\ab{\bm{r}_1}{\bm{v}},\quad\bm{v}\in\bm{V},\label{semierr1}\\
    c(\partial_te_{p},q)+b(\partial_te_u,q)+d(e_w,q)&=\ab{r_2}{q},\quad q\in Q,\\
    e(e_w,\bm{z})-d(\bm{z},e_p)&=\ab{\bm{r}_3}{\bm{z}},\quad\bm{z}\in\bm{W},
\end{align}
\end{subequations}
where the residuals $\bm{r}_1(t)\in\bm{V}^\prime$, $r_2(t)\in Q^\prime$, $\bm{r}_3(t)\in\bm{W}^\prime$ are defined by
\begin{align*}
    \ab{\bm{r}_1}{\bm{v}}&:=(\bm{f},\bm{v})-a(\bm{u}_h,\bm{v})+b(\bm{v}, p_h),\\
    \ab{{r}_2}{q}&:=(g,q)-c(\partial_tp_{h},q)-b(\partial_t\bm{u}_{h},q)-d(\bm{w}_h,q),\\
    \ab{\bm{r}_3}{\bm{z}}&:=-e(\bm{w}_h,\bm{z})+d(\bm{z},p_h).
\end{align*}
With the help of Lemma \ref{energy}, we immediately obtain the following corollary.
\begin{corollary}\label{cor}
For the errors defined in~\eqref{error1} and $t\in(0,T]$, we have
\begin{equation*}
    \begin{aligned}
    &\vertiii{(e_u,e_p,e_w)(t)}^2+\|(e_u,e_p,e_w)\|^2_{L^2(0,t;X)}\leq C_{\text{stab}}\big\{\vertiii{(e_u,e_p,e_w)(0)}^2\\
    &\quad+\left(\int_0^t\|r_2\|_\prime ds\right)^2+\int_0^t\big(\|\bm{r}_1\|_\prime^2+\|\partial_t\bm{r}_1\|_\prime^2+\|r_2\|_\prime^2+\|\bm{r}_3\|_\prime^2+\|\partial_t\bm{r}_3\|_{\prime}^2\big)ds\big\}.
    \end{aligned}
\end{equation*}
\end{corollary}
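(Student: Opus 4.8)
The strategy is simply to recognize that the error system \eqref{semierr} is a particular instance of the generalized variational problem \eqref{Gvar}, and then to invoke Lemma \ref{energy} verbatim. Concretely, I would set $\tilde{\bm{u}}=e_u$, $\tilde{p}=e_p$, $\tilde{\bm{w}}=e_w$ and identify the right-hand-side functionals as $\bm{F}_1=\bm{r}_1$, $F_2=r_2$, $\bm{F}_3=\bm{r}_3$. With this dictionary, \eqref{semierr1}--\eqref{semierr} become exactly \eqref{Gvar1}--\eqref{Gvar3}, and the three norms $\|F_2\|_\prime$, $\|\bm{F}_1\|_\prime+\|\partial_t\bm{F}_1\|_\prime$, $\|\bm{F}_3\|_\prime+\|\partial_t\bm{F}_3\|_\prime$ in the statement of Lemma \ref{energy} translate into the residual norms appearing in the corollary. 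Since $\vertiii{\cdot}$ and $\|\cdot\|_{L^2(0,t;X)}$ are the same norms in both places, the conclusion of Lemma \ref{energy} is precisely the claimed bound with the same constant $C_{\text{stab}}$.

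Before applying the lemma, I would check that the hypotheses are met, i.e.\ that $(e_u,e_p,e_w)$ has the requisite regularity $H^1(0,T;\bm{V})\times H^1(0,T;Q)\times L^2(0,T;\bm{W})$ and that $\bm{r}_1\in H^1(0,T;\bm{V}^\prime)$, $r_2\in L^2(0,T;Q^\prime)$, $\bm{r}_3\in H^1(0,T;\bm{W}^\prime)$. The first is immediate: $(\bm{u},p,\bm{w})$ has this regularity by Theorem \ref{existence}, and $(\bm{u}_h,p_h,\bm{w}_h)$ has it by the definition of the semi-discrete scheme \eqref{semidis}, so the differences do too. For the residuals, one reads off the defining formulas: $\ab{\bm{r}_1}{\bm{v}}=(\bm{f},\bm{v})-a(\bm{u}_h,\bm{v})+b(\bm{v},p_h)$ is a sum of an $H^1(0,T;\bm{V}^\prime)$ term (the data $\bm{f}$) and terms depending linearly on $\bm{u}_h,p_h\in H^1$ through the bounded bilinear forms $a,b$, hence $\bm{r}_1\in H^1(0,T;\bm{V}^\prime)$ with $\partial_t\bm{r}_1$ obtained by differentiating through $\bm{f}$, $\bm{u}_h$, $p_h$; the same argument applies to $\bm{r}_3$ using $\bm{w}_h,p_h\in H^1$; and $r_2$ is a combination of $g\in L^2(0,T;Q)$ and the $L^2$-in-time quantities $\partial_t p_h$, $\partial_t\bm{u}_h$, $\bm{w}_h$, so $r_2\in L^2(0,T;Q^\prime)$.

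The only mildly delicate point — and the closest thing to an obstacle — is making sure the time-differentiated residual pairings $\ab{\partial_t\bm{r}_1}{\cdot}$ and $\ab{\partial_t\bm{r}_3}{\cdot}$ are well defined and that differentiating the bilinear-form terms in time commutes with the pairing, which is justified because $a,b,e,d$ are time-independent continuous bilinear forms and $\bm{u}_h,p_h,\bm{w}_h$ are strongly $H^1$ in time; this is exactly the situation already exploited in the proof of Lemma \ref{energy} when \eqref{Gvar1} and \eqref{Gvar3} were differentiated. Once this bookkeeping is in place, no further work is needed: the corollary is a direct specialization of Lemma \ref{energy}, obtained by substituting $(\tilde{\bm{u}},\tilde{p},\tilde{\bm{w}},\bm{F}_1,F_2,\bm{F}_3)\mapsto(e_u,e_p,e_w,\bm{r}_1,r_2,\bm{r}_3)$.
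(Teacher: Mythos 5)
Your proposal is correct and matches the paper's argument exactly: the paper derives the corollary by observing that the error system \eqref{semierr} is an instance of the generalized problem \eqref{Gvar} with $\bm{F}_1=\bm{r}_1$, $F_2=r_2$, $\bm{F}_3=\bm{r}_3$ and then applying Lemma \ref{energy} directly. Your additional regularity bookkeeping is sound and only makes explicit what the paper leaves implicit in its one-line justification.
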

For a $\mathbb{R}^d$-valued function $\bm{v}=(v_i)_{1\leq i\leq d}$ and a scalar-valued function $v$, let
\begin{align*}
    \curl\bm{v}&=(\partial_{x_2}{v}_3-\partial_{x_3}{v}_2,\partial_{x_3}{v}_1-\partial_{x_1}{v}_3,\partial_{x_1}{v}_2-\partial_{x_2}{v}_1)^T\text{ when }d=3,\\
    \curl v&=(\partial_{x_2}{v},-\partial_{x_1}{v})^T,\quad\rot\bm{v}=\partial_{x_1}{v}_2-\partial_{x_2}{v}_1\text{ when }d=2.
\end{align*}
Let $\bm{N}^0_h$ denote the lowest order N\'ed\'elec edge element space (see~\cite{Nedelec1980})
\begin{align*}
    \bm{N}^0_h&=\{\bm{v}\in [L^2(\Omega)]^3: \curl\bm{v}\in[L^2(\Omega)]^3,~\bm{v}\times\bm{n}=0\text{ on }\Gamma_1,\\
    &\bm{v}|_K\in[\mathcal{P}_0(K)]^3+[\mathcal{P}_0(K)]^3\times\bm{x}\text{ for all }K\in\Th\},
\end{align*}
and $V_h$ denote the \emph{scalar} linear element space
\begin{align*}
    V_h&=\{v\in H^1(\Omega): v|_K\in\mathcal{P}_1(K)\text{ for all } K\in\mathcal{T}_h, ~v=0\text{ on }\Gamma_1\}.
\end{align*}
For each $K\in\Th$, let $h_K=|K|^{\frac{1}{d}}$ denote the size of $K$, $\|\cdot\|_K$ denote the $L^2$-norm on $K$, and $\|\cdot\|_{\partial K}$ denote the $L^2$-norm on $\partial K$.
To estimate the norms of $\bm{r}_3, \partial_t\bm{r}_3\in\bm{W}^\prime,$ we need the following theorem, which is a combination of the $H^1$-regular decomposition (see e.g.,~\cite{Hip2002,PZ2002,DH2014}) and bounded quasi-interpolation operators which commute with the exterior differentiation (see~\cite{Schoberl2008,DH2014}).
\begin{theorem}\label{regqi}
Let $\Omega\subset\mathbb{R}^d$ with $d=3$ (resp.~$d=2$). There exist quasi-interpolations $\Pi_h: [L^2(\Omega)]^3\rightarrow\bm{W}^0_h$ (resp.~$\Pi_h: [L^2(\Omega)]^2\rightarrow\bm{W}^0_h$), and $J_h: [L^2(\Omega)]^3\rightarrow\bm{N}^0_{h}$ (resp.~$J_h: L^2(\Omega)\rightarrow V_{h}$) such that $\Pi_h\curl=\curl J_h$. In addition, for any $\bm{z}\in\bm{W}$, there exist $\bm{\varphi}\in\bm{V}$  (resp.~$\bm{\varphi}\in H^1(\Omega)$, $\bm{\varphi}|_{\Gamma_1}=0$) and $\bm{\phi}\in\bm{V}$, such that
$$\bm{z}=\curl\bm{\varphi}+\bm{\phi},\quad\Pi_h\bm{z}=\curl J_h\bm{\varphi}+\Pi_h\bm{\phi},$$
and
\begin{equation}\label{interperror}
    \begin{aligned}
    &\sum_{K\in\Th}\big(h_K^{-2}\|\bm{\varphi}-J_h\bm{\varphi}\|^2_K+h_K^{-2}\|\bm{\phi}-\Pi_h\bm{\phi}\|^2_K\\
    &\quad+h_K^{-1}\|\bm{\varphi}-J_h\bm{\varphi}\|^2_{\partial K}+h_K^{-1}\|\bm{\phi}-\Pi_h\bm{\phi}\|^2_{\partial K}\big)\leq C_{\text{reg}}\|\bm{z}\|_{\bm{W}}^2,
    \end{aligned}
\end{equation}
where $C_{\text{reg}}$ depends only on $\bm{K},$ $\Omega$, $\Gamma_1,$     $\widetilde{C}_{\text{shape}}$.
\end{theorem}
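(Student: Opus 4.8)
The plan is to combine two classical ingredients: a regular ("Helmholtz-type") decomposition for $H(\divg)$-functions, and the existence of commuting bounded quasi-interpolation operators onto the discrete de Rham complex $\bm{N}^0_h \to \bm{W}^0_h$ (edge to face elements) in three dimensions, and onto $V_h \to \bm{W}^0_h$ in two dimensions. First I would recall the regular decomposition: given $\bm{z}\in\bm{W}$, since $\divg\bm{z}\in L^2(\Omega)$ one solves an auxiliary (vector) Poisson-type problem to produce $\bm{\phi}\in\bm{V}$ with $\divg\bm{\phi}=\divg\bm{z}$ (or $\bm{\phi}$ chosen so that $\divg(\bm{z}-\bm{\phi})$ is controlled), and with $\|\bm{\phi}\|_{H^1}\lesssim\|\divg\bm{z}\|$; the boundary condition $\bm{\phi}\cdot\bm{n}=0$ on $\Gamma_1$ (in fact $\bm{\phi}|_{\Gamma_1}=0$) is arranged via the homogeneous trace. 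Then $\bm{z}-\bm{\phi}$ is divergence-free and tangential on $\Gamma_1$, so it is a curl: $\bm{z}-\bm{\phi}=\curl\bm{\varphi}$ with $\bm{\varphi}\in\bm{V}$ (resp. $\bm{\varphi}\in H^1(\Omega)$ with $\bm{\varphi}|_{\Gamma_1}=0$) and $\|\bm{\varphi}\|_{H^1}\lesssim\|\bm{z}-\bm{\phi}\|\lesssim\|\bm{z}\|_{\bm{W}}$. This is the content of the $H^1$-regular decomposition cited from \cite{Hip2002,PZ2002,DH2014}, adapted to the mixed boundary condition on $\Gamma_1\cup\Gamma_2$; one must check the decomposition respects the partial boundary conditions, which is standard when $\Gamma_1$ is a union of faces of $\Th$ and $\overline{\Gamma_1}\cap\overline{\Gamma_2}$ is benign, but is the first place where care is needed.

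Next I would invoke the Schöberl-type (or Falk--Winther / Christiansen--Winther) commuting quasi-interpolation operators $J_h$ onto $\bm{N}^0_h$ (resp. $V_h$) and $\Pi_h$ onto $\bm{W}^0_h$, which are defined on the full $L^2$ spaces (not requiring extra regularity), are uniformly $L^2$-bounded on shape-regular meshes, are projections on the discrete spaces, and satisfy the commuting relation $\Pi_h\curl=\curl J_h$ on $\bm{N}^0_h$'s natural domain; see \cite{Schoberl2008,DH2014}. Applying these to the pieces of the decomposition gives $\Pi_h\bm{z}=\Pi_h\curl\bm{\varphi}+\Pi_h\bm{\phi}=\curl J_h\bm{\varphi}+\Pi_h\bm{\phi}$, which is exactly the claimed identity. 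The local first-order approximation estimates $h_K^{-1}\|\bm{\varphi}-J_h\bm{\varphi}\|_K+h_K^{-1/2}\|\bm{\varphi}-J_h\bm{\varphi}\|_{\partial K}\lesssim\|\bm{\varphi}\|_{H^1(\omega_K)}$ (patch norm), and the analogous one for $\Pi_h\bm{\phi}$, are part of the standard theory of such quasi-interpolants; summing over $K$, using finite overlap of the patches $\omega_K$ (shape-regularity, constant $\widetilde C_{\text{shape}}$), and the $H^1$-stability bounds $\|\bm{\varphi}\|_{H^1}+\|\bm{\phi}\|_{H^1}\lesssim\|\bm{z}\|_{\bm{W}}$ from the decomposition yields \eqref{interperror} with $C_{\text{reg}}$ depending only on $\bm{K}$ (through the equivalence of $\|\cdot\|_e$ and the $L^2$-norm and through the decomposition constants), $\Omega$, $\Gamma_1$, and $\widetilde C_{\text{shape}}$.

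I expect the main obstacle to be the bookkeeping around the mixed boundary conditions: one needs the regular decomposition to produce potentials vanishing on $\Gamma_1$ (for $\bm{\varphi}$ and $\bm{\phi}$) so that $\curl J_h\bm{\varphi}$ lands in $\bm{W}^0_h$ with the correct essential condition $\bm{w}\cdot\bm{n}=0$ on $\Gamma_1$, and correspondingly the commuting operators $J_h,\Pi_h$ must be the versions that preserve these partial boundary conditions. This requires either citing a version of \cite{DH2014} that handles partially vanishing traces, or giving a short reduction (e.g. extending across $\Gamma_2$ or using a reflection/localization argument near $\Gamma_1$). The remaining steps --- solving the auxiliary Poisson problem, the $H(\divg)$ potential lemma, the approximation and stability properties of the quasi-interpolants, and the patch-overlap summation --- are by now standard and I would only cite them rather than reproduce the estimates. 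A secondary technical point is to make sure the two-dimensional case is stated with the scalar $\curl$ and scalar potential space $V_h$ consistently, which the theorem statement already anticipates.
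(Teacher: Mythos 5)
Your proposal is correct and follows exactly the route the paper intends: the paper gives no detailed proof of Theorem \ref{regqi}, stating only that it is ``a combination of the $H^1$-regular decomposition and bounded quasi-interpolation operators which commute with the exterior differentiation'' with citations to the same literature you invoke, and your sketch (regular decomposition with partial boundary conditions, Sch\"oberl/Demlow--Hirani commuting quasi-interpolants, local approximation plus patch-overlap summation) is precisely that combination. You also correctly flag the handling of the mixed boundary conditions on $\Gamma_1$ as the one point requiring care, which is the same caveat implicit in the paper's citation of the partial-trace versions of these results.
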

Theorem \ref{regqi} or its variants are widely used in the a posteriori error estimation of stationary problems based on $H(\divg)$ or $H(\curl)$, see, e.g., \cite{CNS2007,Schoberl2008,HuangXu2012,DH2014,CW2017,YL2019,YL2019b,HLMS2019}.

Now we are in a position to derive a posteriori error estimator of the system given in~\eqref{semidis}. For each face $F$, let  $\bm{n}_F$ be a unit normal to $F$ where $\bm{n}_F$ is chosen to be outward pointing when $F$ is a boundary face. For each interior $F\in\mathcal{F}(\Th)$ shared by $K_1, K_2\in\Th$ and a piecewise $H^1$-function $\chi$, let $[\chi]|_F:=(\chi|_{K_1}-\chi|_{K_2})|_F$ denote the jump across $F$, where $\bm{n}_F$ is pointing from $K_1$ to $K_2.$ For any $F\subset\partial\Omega$ that is a boundary face in $\mathcal{T}_h$, we set $[\chi]|_F:=0$ if $F\subset\Gamma_1$, and $[\chi]|_F:=\chi|_F$ if $F\subset\Gamma_2$. Regarding the mesh $\Th,$ we use the following error indicators
\begin{align*}
    \mathcal{E}^1_{\Th}(\bm{u}_h,p_h,\bm{f})&:=\sum_{K\in\Th}\big\{h_K^2\|\bm{f}+\divg\bm{\sigma}(\bm{u}_h)-\alpha\nabla p_h\|_K^2\\
    &+\sum_{F\in\mathcal{F}(\Th), F\subset\partial K}h_K\|[\bm{\sigma}(\bm{u}_h)-\alpha p_h\bm{I}]\bm{n}_F\|_F^2\big\},\\
    \mathcal{E}^2_{\Th}(\partial_t\bm{u}_{h},\partial_tp_{h},\bm{w}_h,g)&:=\sum_{K\in\Th}\|g-\beta \partial_tp_{h}-\divg\partial_t\bm{u}_{h}-\divg\bm{w}_h\|_K^2.
\end{align*}
Another error estimator is
\begin{align*}
    &\mathcal{E}^3_{\Th}(p_h,\bm{w}_h):=\sum_{K\in\Th}\big\{h_K^2\|\bm{K}^{-1}\bm{w}_h+\nabla p_h\|_K^2+h_K^2\|\curl(\bm{K}^{-1}\bm{w}_h)\|_K^2\\
    &\quad+\sum_{F\in\mathcal{F}(\Th), F\subset\partial K}h_K\|[(\bm{K}^{-1}\bm{w}_h)\times\bm{n}_F]\|_F^2+h_K\|[p_h]\|_F^2\big\}\text{ when }d=3,
\end{align*}
and when $d=2$ (for a two dimensional problem)
\begin{align*}
    \mathcal{E}^3_{\Th}(p_h,\bm{w}_h)&:=\sum_{K\in\Th}\big\{h_K^2\|\bm{K}^{-1}\bm{w}_h+\nabla p_h\|_K^2+h_K^2\|\rot(\bm{K}^{-1}\bm{w}_h)\|_K^2\\
    &\quad+\sum_{F\in\mathcal{F}(\Th), F\subset\partial K}h_K\|[(\bm{K}^{-1}\bm{w}_h)\cdot\bm{t}_F]\|_F^2+h_K\|[p_h]\|_F^2\big\},
\end{align*}
where $\bm{t}_F$ is a unit tangent vector to $F$.
The next theorem presents a posteriori error estimates of the semi-discrete method \eqref{semidis}.
\begin{theorem}\label{semiresult}
When $d=2$ or $3$, there exists a constant $C_{\text{rel}}$ dependent only on $\mu$, $\alpha$, $\beta$, $\bm{K}$, $\Omega$, $\Gamma_1$ and the shape regularity of $\Th,$ such that
\begin{equation*}
    \begin{aligned}
    &\vertiii{(e_u,e_p,e_w)(t)}^2+\|(e_u,e_p,e_w)\|^2_{L^2(0,t;X)}\leq C_{\text{rel}}\big\{\vertiii{(e_u,e_p,e_w)(0)}^2\\
    &+\left(\int_0^t\mathcal{E}^2_{\Th}(\partial_t\bm{u}_{h},\partial_tp_{h},\bm{w}_h,g)^\frac{1}{2}ds\right)^2+\int_0^t\big(\mathcal{E}^1_{\Th}(\bm{u}_h,p_h,\bm{f})+\mathcal{E}^1_{\Th}(\partial_t\bm{u}_{h},\partial_tp_{h},\partial_t\bm{f})\\
    &\quad+\mathcal{E}^2_{\Th}(\partial_t\bm{u}_{h},\partial_tp_{h},\bm{w}_h,g)+\mathcal{E}^3_{\Th}(p_h,\bm{w}_h)+\mathcal{E}^3_{\Th}(\partial_tp_{h},\partial_t\bm{w}_{h})\big)ds\big\}.
    \end{aligned}
\end{equation*}
\end{theorem}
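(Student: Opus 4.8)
The plan is to combine Corollary \ref{cor} with five pointwise-in-time bounds on the residuals. Corollary \ref{cor} already reduces the left-hand side of the theorem to
\[
\vertiii{(e_u,e_p,e_w)(0)}^2+\Big(\int_0^t\|r_2\|_\prime\,ds\Big)^2+\int_0^t\big(\|\bm{r}_1\|_\prime^2+\|\partial_t\bm{r}_1\|_\prime^2+\|r_2\|_\prime^2+\|\bm{r}_3\|_\prime^2+\|\partial_t\bm{r}_3\|_\prime^2\big)ds,
\]
so it will suffice to show, for a.e.\ $t$, that $\|\bm{r}_1\|_{\bm{V}'}^2\le C\mathcal{E}^1_{\Th}(\bm{u}_h,p_h,\bm{f})$, $\|r_2\|_{Q'}^2\le C\mathcal{E}^2_{\Th}(\partial_t\bm{u}_h,\partial_tp_h,\bm{w}_h,g)$, $\|\bm{r}_3\|_{\bm{W}'}^2\le C\mathcal{E}^3_{\Th}(p_h,\bm{w}_h)$, together with the analogous bounds for $\partial_t\bm{r}_1$ and $\partial_t\bm{r}_3$, and then substitute. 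I would get the $\partial_t\bm{r}_1$ and $\partial_t\bm{r}_3$ estimates for free from those for $\bm{r}_1$ and $\bm{r}_3$ by differentiating \eqref{semidis1} and \eqref{semidis3} in time, which is legitimate since $\bm{u}_h\in H^1(0,T;\bm{V}_h)$ and $\bm{w}_h\in H^1(0,T;\bm{W}_h)$: this merely replaces $(\bm{u}_h,p_h,\bm{f})$ by $(\partial_t\bm{u}_h,\partial_tp_h,\partial_t\bm{f})$ and $(\bm{w}_h,p_h)$ by $(\partial_t\bm{w}_h,\partial_tp_h)$ while preserving Galerkin orthogonality, so only $\bm{r}_1$, $r_2$, $\bm{r}_3$ need discussion.

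The bound for $r_2$ is immediate and carries no interpolation: $\langle r_2,q\rangle$ is the $L^2(\Omega)$ pairing of $q$ with the elementwise residual of the second equation of \eqref{semidis}, and $\|\cdot\|_c$ is a fixed multiple of $\|\cdot\|$, so taking the supremum over $\|q\|_c=1$ gives the claim directly (the constant absorbing $\alpha$ and $\beta$); this is exactly why $\mathcal{E}^2_{\Th}$ has no mesh-size weight. For $\bm{r}_1$ I would invoke Galerkin orthogonality $\langle\bm{r}_1,\bm{v}_h\rangle=0$ for $\bm{v}_h\in\bm{V}_h$ coming from \eqref{semidis1}, choose a Scott--Zhang type quasi-interpolant $I_h\bm{v}\in\bm{V}_h$ respecting the boundary condition on $\Gamma_1$ and satisfying the usual local bounds $\|\bm{v}-I_h\bm{v}\|_K\lesssim h_K\|\nabla\bm{v}\|_{\widetilde K}$ and $\|\bm{v}-I_h\bm{v}\|_{\partial K}\lesssim h_K^{1/2}\|\nabla\bm{v}\|_{\widetilde K}$, write $\langle\bm{r}_1,\bm{v}\rangle=\langle\bm{r}_1,\bm{v}-I_h\bm{v}\rangle$, and integrate by parts element by element (using symmetry of $\bm{\sigma}$) to produce the interior residual $\bm{f}+\divg\bm{\sigma}(\bm{u}_h)-\alpha\nabla p_h$ and the face jumps $[(\bm{\sigma}(\bm{u}_h)-\alpha p_h\bm{I})\bm{n}_F]$; the conventions for $[\cdot]$ on $\Gamma_1$ and $\Gamma_2$ are precisely what is needed to absorb the boundary faces. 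Cauchy--Schwarz, the interpolation estimates, shape-regularity, and Korn's inequality $\|\nabla\bm{v}\|\le C\|\bm{v}\|_a$ then deliver $\|\bm{r}_1\|_{\bm{V}'}^2\le C\mathcal{E}^1_{\Th}(\bm{u}_h,p_h,\bm{f})$ (tacitly assuming $\bm{f}\in L^2$, otherwise a data-oscillation term appears).

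The genuinely new and hardest step is controlling $\bm{r}_3$ in $\bm{W}'=H(\divg,\Omega)'$: the naive device of subtracting a quasi-interpolant of $\bm{z}\in\bm{W}$ fails because $\|\divg(\bm{z}-\Pi_h\bm{z})\|$ does not scale with $h_K$. Here I would use Theorem \ref{regqi}: given $\bm{z}\in\bm{W}$, write $\bm{z}=\curl\bm{\varphi}+\bm{\phi}$ with the commuting relation $\Pi_h\bm{z}=\curl J_h\bm{\varphi}+\Pi_h\bm{\phi}$, and exploit Galerkin orthogonality $\langle\bm{r}_3,\Pi_h\bm{z}\rangle=0$ (from \eqref{semidis3}, valid since $\bm{W}_h^0\subseteq\bm{W}_h$) to split
\[
\langle\bm{r}_3,\bm{z}\rangle=\langle\bm{r}_3,\bm{z}-\Pi_h\bm{z}\rangle=\langle\bm{r}_3,\curl(\bm{\varphi}-J_h\bm{\varphi})\rangle+\langle\bm{r}_3,\bm{\phi}-\Pi_h\bm{\phi}\rangle.
\]
In the first term the contribution $d(\curl(\bm{\varphi}-J_h\bm{\varphi}),p_h)$ vanishes because $\divg\curl=0$, and what remains, $-(\bm{K}^{-1}\bm{w}_h,\curl(\bm{\varphi}-J_h\bm{\varphi}))$, is integrated by parts element by element to produce the volume residual $\curl(\bm{K}^{-1}\bm{w}_h)$ (resp.\ $\rot(\bm{K}^{-1}\bm{w}_h)$ for $d=2$) and the tangential face jumps $[(\bm{K}^{-1}\bm{w}_h)\times\bm{n}_F]$ (resp.\ $[(\bm{K}^{-1}\bm{w}_h)\cdot\bm{t}_F]$); the second term is handled exactly as $\bm{r}_1$, with $d(\bm{\phi}-\Pi_h\bm{\phi},p_h)$ integrated by parts to yield $\bm{K}^{-1}\bm{w}_h+\nabla p_h$ and the jumps $[p_h]$. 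Applying Cauchy--Schwarz with the weighted interpolation bound \eqref{interperror} to both $\bm{\varphi}-J_h\bm{\varphi}$ and $\bm{\phi}-\Pi_h\bm{\phi}$ then gives $|\langle\bm{r}_3,\bm{z}\rangle|\le C\mathcal{E}^3_{\Th}(p_h,\bm{w}_h)^{1/2}\|\bm{z}\|_{\bm{W}}$, hence $\|\bm{r}_3\|_{\bm{W}'}^2\le C\mathcal{E}^3_{\Th}(p_h,\bm{w}_h)$, uniformly in $d=2,3$. I expect this $H(\divg)'$-residual step — recognizing that the regular decomposition plus commuting interpolations of Theorem \ref{regqi} are exactly what is required, the $\curl$-part killing the divergence term and the $H^1$-part reducing to a standard residual argument — to be the main obstacle; everything else is bookkeeping. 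Substituting the five bounds into the reduced right-hand side above and renaming constants then completes the proof.
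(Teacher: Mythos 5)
Your proposal is correct and follows essentially the same route as the paper's proof: reduction via Corollary \ref{cor}, a Cl\'ement/Scott--Zhang quasi-interpolant with elementwise integration by parts and Korn's inequality for $\bm{r}_1$ and $\partial_t\bm{r}_1$, the trivial $L^2$ bound for $r_2$, and the regular decomposition with commuting quasi-interpolations of Theorem \ref{regqi} for $\bm{r}_3$ and $\partial_t\bm{r}_3$. The only differences are cosmetic (choice of quasi-interpolant, and your explicit remark that $\divg\curl=0$ kills the $d(\cdot,p_h)$ contribution of the curl part, which the paper uses implicitly).
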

\begin{proof}
We focus on the case $d=3$ since the proof when $d=2$ is similar. In the proof, we use $C$ to denote generic constant dependent only on $\mu,$ $\alpha,$ $\beta$, $\bm{K}$, $\widetilde{C}_{\text{shape}}$, and $\Omega,$ $\Gamma_1.$ In view of Corollary \ref{cor}, it remains to estimate the norm of each residual. Let $I_h: \bm{V}\rightarrow\bm{V}_h$ denote the Cl\'ement interpolation (see \cite{Clement,Verfurth2013}). Thanks to \eqref{semidis1}, it holds that for each $\bm{v}\in\bm{V},$
\begin{equation}\label{r1}
    \ab{\bm{r}_1}{\bm{v}}=(\bm{f},\bm{v}-I_h\bm{v})-a(\bm{u}_h,\bm{v}-I_h\bm{v})+b(\bm{v}-I_h\bm{v},p_h).
\end{equation}
Element-wise integration by parts leads to
\begin{equation*}
    \begin{aligned}
    \ab{\bm{r}_1}{\bm{v}}&=(\bm{f},\bm{v}-I_h\bm{v})+\sum_{K\in\Th}\left\{-\int_K\bm{\sigma}(\bm{u}_h):\bm{\varepsilon}(\bm{v}-I_h\bm{v})+\int_K\divg(\alpha(\bm{v}-I_h\bm{v}))p_h\right\}\\
    &=\sum_{K\in\Th}\int_K(\bm{f}+\divg\bm{\sigma}(\bm{u}_h)-\alpha\nabla p_h)\cdot(\bm{v}-I_h\bm{v})\\
    &\quad+\sum_{F\in\mathcal{F}(\Th)}\int_F\big([-\bm{\sigma}(\bm{u}_h)+\alpha p_h\bm{I}]\bm{n}_F\big)\cdot(\bm{v}-I_h\bm{v}).
    \end{aligned}
\end{equation*}
Combining the previous equation with the Cauchy--Schwarz inequality and shape-regularity of $\Th$, we obtain
\begin{equation}\label{inter1}
    \ab{\bm{r}_1}{\bm{v}}\leq C\mathcal{E}^1_{\Th}(\bm{u}_h,p_h,\bm{f})^\frac{1}{2}\big(\sum_{K\in\Th}h_K^{-2}\|\bm{v}-I_h\bm{v}\|_K^2+h_K^{-1}\|\bm{v}-I_h\bm{v}\|_{\partial K}^2\big)^\frac{1}{2}.
\end{equation}
It then follows from \eqref{inter1}, the well-known approximation result
\begin{equation*}
    \sum_{K\in\Th}h_K^{-2}\|\bm{v}-I_h\bm{v}\|_K^2+h_K^{-1}\|\bm{v}-I_h\bm{v}\|_{\partial K}^2\leq C|\bm{v}|^2_{H^1(\Omega)},
\end{equation*}
and the Korn's inequality  (cf.~\cite{KO1989})
\begin{equation}\label{Korn}
    |\bm{v}|_{H^1(\Omega)}\leq C\|\bm{v}\|_a,\quad\forall\bm{v}\in\bm{V},
\end{equation}
that
\begin{equation}\label{bd1}
    \|\bm{r}_1\|_{\prime}\leq C\mathcal{E}^1_{\Th}(\bm{u}_h,p_h,\bm{f})^\frac{1}{2}.
\end{equation}
Similarly \eqref{r1} implies that for each $\bm{v}\in\bm{V},$
\begin{equation*}
    \ab{\partial_t\bm{r}_1}{\bm{v}}=(\partial_t\bm{f},\bm{v}-I_h\bm{v})-a(\partial_t\bm{u}_h,\bm{v}-I_h\bm{v})+b(\bm{v}-I_h\bm{v},\partial_tp_h).
\end{equation*}
Then the next estimate
\begin{equation}\label{bd2}
    \|\partial_t\bm{r}_1\|_\prime^2\leq C\mathcal{E}^1_{\Th}(\partial_t\bm{u}_{h},\partial_tp_{h},\partial_t\bm{f}).
\end{equation}
can be proved in the same way as \eqref{bd1}. The norm of $\bm{r}_2$ is trivially estimated by
\begin{equation}\label{bd3}
    \|{r}_2\|_{\prime}\leq C\|g-\beta \partial_tp_{h}-\divg\partial_t\bm{u}_{h}-\divg\bm{w}_h\|.
\end{equation}
To estimate $\|\bm{r}_3\|_\prime$, we use \eqref{semidis3} to obtain
\begin{equation}\label{r3ortho}
    \ab{\bm{r}_3}{\bm{z}}=-e(\bm{w}_h,\bm{z}-\Pi_h\bm{z})+d(\bm{z}-\Pi_h\bm{z},p_h).
\end{equation}
Due to Theorem \ref{regqi}, there exists $\bm{\varphi}\in\bm{V}$ and $\bm{\phi}\in\bm{V}$ such that
\begin{equation}\label{decomp}
    \bm{z}-\Pi_h\bm{z}=\curl(\bm{\varphi}-J_h\bm{\varphi})+\bm{\phi}-\Pi_h\bm{\phi},
\end{equation}
where $\bm{\varphi}$ and $\bm{\phi}$ satisfy \eqref{interperror}.
Using \eqref{r3ortho}, \eqref{decomp}, and element-wise integration by parts, we arrive at
\begin{equation*}
\begin{aligned}
    &\ab{\bm{r}_3}{\bm{z}}=-(\bm{K}^{-1}\bm{w}_h,\curl(\bm{\varphi}-J_h\bm{\varphi}))-(\bm{K}^{-1}\bm{w}_h,\bm{\phi}-\Pi_h\bm{\phi})+(\divg(\bm{\phi}-\Pi_h\bm{\phi}),p_h)\\
    &\quad=\sum_{K\in\Th}\left\{-\int_K\curl(\bm{K}^{-1}\bm{w}_h)\cdot(\bm{\varphi}-J_h\bm{\varphi})-\int_K(\bm{K}^{-1}\bm{w}_h+\nabla p_h)\cdot(\bm{\phi}-\Pi_h\bm{\phi})\right\}\\
    &\qquad+\sum_{F\in\mathcal{F}(\Th)}\left\{\int_F-[(\bm{K}^{-1}\bm{w}_h)\times\bm{n}_F]\cdot(\bm{\varphi}-J_h\bm{\varphi})+\int_F[p_h](\bm{\phi}-\Pi_h\bm{\phi})\cdot\bm{n}_F\right\}.
\end{aligned}
\end{equation*}
It then follows from the previous equation, the Cauchy--Schwarz inequality, and \eqref{interperror} that
\begin{equation}\label{bd4}
    \ab{\bm{r}_3}{\bm{z}}\leq C\mathcal{E}^3_{\Th}(p_h,\bm{w}_h)^\frac{1}{2}\|\bm{z}\|_{\bm{W}}.
\end{equation}
Similarly, it holds that
\begin{equation}\label{bd5}
    \|\partial_t\bm{r}_3\|_\prime\leq C\mathcal{E}^3_{\Th}(\partial_tp_h,\partial_t\bm{w}_h)^\frac{1}{2}.
\end{equation}
Combining \eqref{bd1}--\eqref{bd3}, \eqref{bd4}, \eqref{bd5} completes the proof.
\end{proof}

\section{Fully discrete method}\label{secfully}
Let $0=t_0<t_1<\cdots<t_{N-1}<t_N=T$ and $\tau_n=t_n-t_{n-1}$ for $n=1,2,\ldots,N.$ Let $\Th^n$ be a conforming simplicial triangulation of $\Omega$ aligned with $\Gamma_1$ and $\Gamma_2$. Let  $\bm{V}_h^n$, $Q_h^n$, $\bm{W}_h^n$ be finite element subspaces of $\bm{V}$, $Q,$ $\bm{W}$ described in Section \ref{secsemi} based on grid $\Th^n$, respectively.  We assume that $\{\Th^n\}_{n=0}^N$ is uniformly shape-regular w.r.t.~$n,$ that is,
$$\max_{0\leq n\leq N}\max_{K\in\Th^n}\frac{r_K}{\rho_K}:=C_{\text{shape}}<\infty.$$
Given a sequence $\{\chi^n\}_{n=0}^N$, we define the backward difference as
$$\delta_t\chi^n=\frac{\chi^n-\chi^{n-1}}{\tau_n},$$
and the continuous linear interpolant $\chi^\tau$ on $[0,T]$ as
\begin{align*}
    \chi^\tau(t)=\frac{t-t_{n-1}}{\tau_n}\chi^n+\frac{t_n-t}{\tau_n}\chi^{n-1},\quad t\in[t_{n-1},t_n].
\end{align*}
Notice that $\partial_t\chi^\tau=\delta_t\chi^n$ over $[t_{n-1},t_n]$.
Let $\bm{u}_h^0$, $p_h^0$, $\bm{w}_h^0$ be suitable approximation to $\bm{u}(0)$, $p(0)$, $\bm{w}(0)$, and $\bm{f}^n=\bm{f}(t_n)$, $g^n=g(t_n).$ The fully discrete scheme for \eqref{var} is to find $\bm{u}_h^n\in\bm{V}^n_h$, $p_h^n\in Q^n_h$, $\bm{w}_h^n\in\bm{W}^n_h$ with $n=1,2,\ldots,N$, such that
\begin{subequations}\label{fully}
\begin{align}
    a({\bm{u}}_h^n,v)-b(\bm{v}, {p}_h^n)&=({\bm{f}}^n,\bm{v}),\quad\bm{v}\in\bm{V}^n_h,\label{fully1}\\
    c(\delta_tp_h^n,q)+b(\delta_t\bm{u}_h^n,q)+d({\bm{w}}^n_h,q)&=({g}^n,q),\quad q\in Q^n_h,\\
    e({\bm{w}}^n_h,\bm{z})-d(\bm{z},{p}^n_h)&=0,\quad \bm{z}\in\bm{W}^n_h.
\end{align}
\end{subequations}
Here, \eqref{fully} is derived using the implicit Euler discretization
in time. To apply Lemma \ref{energy}, let
$\bm{u}^\tau_{h}(t)$, $p^\tau_{h}(t)$, $\bm{w}^\tau_{h}(t)$ be the
continuous linear interpolants of $\bm{u}_h^n$, $p_h^n,$ $\bm{w}_h^n$
defined above, respectively.  Let
\begin{equation}\label{error2}
    E_u=\bm{u}-\bm{u}^\tau_{h},\quad E_p=p-p^\tau_{h},\quad E_w=\bm{w}-\bm{w}^\tau_{h}.
\end{equation}
Rewriting \eqref{var} gives the following the error equation
\begin{equation}\label{fullyerror}
    \begin{aligned}
    a(E_u,\bm{v})-b(\bm{v}, E_p)&=\ab{\bm{R}_1}{\bm{v}},\quad\bm{v}\in\bm{V},\\
    c(\partial_tE_p,q)+b(\partial_tE_u,q)+d(E_w,q)&=\ab{R_2}{q},\quad q\in Q,\\
    e(E_w,\bm{z})-d(\bm{z},E_p)&=\ab{\bm{R}_3}{\bm{z}},\quad\bm{z}\in\bm{W},
\end{aligned}
\end{equation}
where residuals $\bm{R}_1(t)\in\bm{V}^\prime$, $\bm{R}_2(t)\in Q^\prime$, $\bm{R}_3(t)\in\bm{W}^\prime$ for each $t\in[0,T]$ are
\begin{equation}\label{R123}
    \begin{aligned}
    \ab{\bm{R}_1}{\bm{v}}&:=(\bm{f},\bm{v})-a(\bm{u}^\tau_{h},\bm{v})+b(\bm{v},p^\tau_{h}),\\
    \ab{R_2}{q}&:=(g,q)-c(\delta_tp_h^n,q)-b(\delta_t\bm{u}_h^n,q)-d(\bm{w}^\tau_{h},q),\\
    \ab{\bm{R}_3}{\bm{z}}&:=-e(\bm{w}^\tau_{h},\bm{z})+d(\bm{z},p^\tau_{h}).
\end{aligned}
\end{equation}

Similarly to the error analysis of the semi-discrete problem, it suffices to analyze the dual norm of the residuals, defined in~\eqref{R123}. However, the mesh used in the fully discrete scheme is allowed to change at different time levels and we introduce several useful operations in the following. Given two triangulations $\CT_{h_1}$ and $\CT_{h_2}$ of $\Omega$, let  $\CT_{h_1}\vee \CT_{h_2}$ denote the minimal common refinement, i.e., $\CT_{h_1}\vee \CT_{h_2}$ is the coarsest conforming triangulation that is a refinement of both  $\CT_{h_1}$ and $\CT_{h_2}$. Similarly, let  $\CT_{h_1}\wedge \CT_{h_2}$ denote the maximal common coarsening, i.e., $\CT_{h_1}\wedge \CT_{h_2}$ is the finest conforming triangulation that is a coarsening of both $\CT_{h_1}$ and $\CT_{h_2}$. The operations $\wedge$ and $\vee$ on triangulations are widely used in adaptivity literature, see, e.g., \cite{CKNS2008,DKS2016}.

To handle simultaneously  $(\bm{u}_h^n,p_h^n,\bm{w}_h^n)$ and $(\bm{u}_h^{n-1},p_h^{n-1},\bm{w}_h^{n-1}),$ we assume that for $1\leq n\leq N$ and consecutive meshes $\Th^n$ and  $\Th^{n-1}$,  the maximal common coarsening $\Th^n\wedge\Th^{n-1}$ and the minimal common refinement $\Th^n\vee\Th^{n-1}$ exist. As is well known, this assumption is true when $\{\Th^n\}_{n=0}^N$ are newest vertex bisection refinement of the same macrotriangulation, cf.~\cite{Lakkis2006,CKNS2008}. In addition, there is a uniform bound on the ratio of the sizes of elements in $K\in\Th^n\vee\Th^{n-1}$ and of elements $K^\prime\in\Th^n\wedge\Th^{n-1}$ contained in $K,$ that is,
\begin{equation}\label{samesize}
    \sup_{1\leq n\leq N}\sup_{K^\prime\subset K, K^\prime\in\Th^n\vee\Th^{n-1}}\sup_{K\in\Th^n\wedge\Th^{n-1}}\frac{h_K}{h_{K^\prime}}:= C_{\text{ratio}}<\infty.
\end{equation}
Similar assumptions are made in a posteriori error estimation of the heat equation, see, e.g., \cite{Verfurth2003}. Let $\bm{f}_h^n\in\bm{V}_h^n$ and $g_h^n\in Q_h^n$ be approximations to $\bm{f}^n$ and $g^n$, respectively. Within the interval $[t_{n-1},t_n]$, we split the residuals into
\begin{subequations}\label{split}
\begin{align}
     \bm{R}_1&=\bm{f}-\bm{f}_h^n+\bm{S}^n_1+\bm{T}^n_1,\label{split1}\\
    R_2&=g-g_h^n+S^n_2+T^n_2,\label{split2}\\
    \bm{R}_3&=\bm{S}^n_3+\bm{T}^n_3,\label{split3}
\end{align}
\end{subequations}
where the spatial residuals $\bm{S}^n_1\in\bm{V}^\prime$, $\bm{S}^n_2\in Q^\prime$, $\bm{S}^n_3\in\bm{W}^\prime$ are defined as
\begin{align*}
    \ab{\bm{S}^n_1}{\bm{v}}&:=\ab{\bm{f}_h^n}{\bm{v}}-a(\bm{u}^n_{h},\bm{v})+b(\bm{v},p^n_{h}),\\
    \ab{S^n_2}{q}&:=\ab{g_h^n}{q}-c(\delta_tp_h^n,q)-b(\delta_t\bm{u}_h^n,q)-d(\bm{w}^n_{h},q),\\
    \ab{\bm{S}^n_3}{\bm{z}}&:=-e(\bm{w}^n_{h},\bm{z})+d(\bm{z},p^n_{h}),
\end{align*}
and the temporal residuals $\bm{T}^n_1(t)\in\bm{V}^\prime$, $\bm{T}^n_2(t)\in Q^\prime$, $\bm{T}^n_3(t)\in\bm{W}^\prime$ for $t\in[t_{n-1},t_n]$ are
\begin{align*}
    \ab{\bm{T}^n_1}{\bm{v}}&:=a(\bm{u}^n_{h}-\bm{u}_h^\tau,\bm{v})-b(\bm{v},p^n_{h}-p_h^\tau),\\
    \ab{T^n_2}{q}&:=d(\bm{w}^n_{h}-\bm{w}^\tau_{h},q),\\
    \ab{\bm{T}^n_3}{\bm{z}}&:=e(\bm{w}^n_{h}-\bm{w}^\tau_{h},\bm{z})-d(\bm{z},p^n_{h}-p^\tau_{h}).
\end{align*}
By \eqref{R123}, the temporal derivatives of $\bm{R}_1$ and $\bm{R}_3$ over $[t_{n-1},t_n]$ are
\begin{subequations}
\begin{align}
\ab{\partial_t\bm{R}_1}{\bm{v}}&=\ab{\partial_t\bm{f}-\delta_t\bm{f}_h^n}{\bm{v}}+\ab{\delta_t\bm{S}^n_1}{\bm{v}},\label{splitdtR1}\\
\ab{\partial_t\bm{R}_3}{\bm{v}}&=\ab{\delta_t\bm{S}^n_3}{\bm{v}}.\label{splitdtR3}
\end{align}
\end{subequations}
Next, we use the following fully discrete spatial error indicators which can be viewed as fully discrete counterparts of the error estimators introduced in Section \ref{secsemi}. We set,
\begin{align*}
    &\mathcal{E}_1^n:=\mathcal{E}^1_{\Th^n}({\bm{u}}^n_h,{p}^n_h,\bm{f}_h^n),\quad\mathcal{E}_{1,t}^n:=\mathcal{E}^1_{\Th^n\vee\Th^{n-1}}(\delta_t{\bm{u}}^n_h,\delta_t{p}^n_h,\delta_t\bm{f}_h^n),\\
    &\mathcal{E}_2^n:=\mathcal{E}^2_{\Th^n}(\delta_t\bm{u}_h^n,\delta_tp_h^n,\bm{w}_h^n,g^n_h),\\
    &\mathcal{E}_3^n:=\mathcal{E}^3_{\Th^n}({p}^n_h,{\bm{w}}^n_h),\quad\mathcal{E}_{3,t}^n:=\mathcal{E}^3_{\Th^n\vee\Th^{n-1}}(\delta_tp^n_h,\delta_t{\bm{w}}^n_h).
\end{align*}
Throughout the rest of the presentation, we shall write $A\lesssim B$ provided
$A\leq CB$, where $C$ is a constant depending only on $\mu$, $\alpha$,
$\beta$, $\bm{K}$, $\Omega$, $\Gamma_1,$ $C_{\text{shape}}$,
$C_{\text{ratio}}$. Since the spatial residuals are time-independent, their norms can be estimated as in the proof of Theorem~\ref{semiresult}. We have the following result.
\begin{lemma}\label{spacelemma}
For $1\leq n\leq N$, it holds that
\begin{subequations}
\begin{align}
    \|\bm{S}^n_1\|^2_\prime&\lesssim\mathcal{E}_1^n,\label{space1}\\
    \|\delta_t\bm{S}^n_1\|^2_\prime&\lesssim\mathcal{E}_{1,t}^n,\label{space2}\\
    \|\bm{S}^n_2\|^2_\prime&\lesssim\mathcal{E}_2^n,\\
    \|\bm{S}^n_3\|^2_\prime &\lesssim\mathcal{E}_3^n,\\
    \|\delta_t\bm{S}^n_3\|^2_\prime&\lesssim\mathcal{E}_{3,t}^n.
\end{align}
\end{subequations}
\end{lemma}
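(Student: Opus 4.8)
The plan is to exploit the fact that the spatial residuals $\bm{S}_1^n$, $S_2^n$, $\bm{S}_3^n$ are, for each fixed $n$, exactly the fully discrete analogues of the semi-discrete residuals $\bm{r}_1$, $r_2$, $\bm{r}_3$ of Section~\ref{secsemi}: they are obtained by evaluating at time $t_n$, replacing $\bm{u}_h,p_h,\bm{w}_h,\partial_t(\cdot)$ by $\bm{u}_h^n,p_h^n,\bm{w}_h^n,\delta_t(\cdot)$, and working over the mesh $\Th^n$. Consequently the bounds \eqref{space1}, $\|S_2^n\|_\prime^2\lesssim\mathcal{E}_2^n$ and $\|\bm{S}_3^n\|_\prime^2\lesssim\mathcal{E}_3^n$ are proved by repeating, essentially word for word, the corresponding steps in the proof of Theorem~\ref{semiresult}. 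Namely: $S_2^n$ is a genuine $L^2(\Omega)$ functional — the pairing with the element-wise residual of \eqref{Biot2} — so $\|S_2^n\|_\prime^2\lesssim\mathcal{E}_2^n$ follows immediately by Cauchy--Schwarz, with no interpolation needed; for $\bm{S}_1^n$ one uses the Galerkin orthogonality $\ab{\bm{S}_1^n}{\bm{v}_h}=0$ for $\bm{v}_h\in\bm{V}_h^n$ (provided by \eqref{fully1}), the Cl\'ement interpolation onto $\bm{V}_h^n$, element-wise integration by parts over $\Th^n$, and Korn's inequality \eqref{Korn}, exactly as in \eqref{r1}--\eqref{bd1}; and for $\bm{S}_3^n$ one uses the orthogonality $\ab{\bm{S}_3^n}{\bm{z}_h}=0$ for $\bm{z}_h\in\bm{W}_h^n$, the regular decomposition and the commuting quasi-interpolations $\Pi_h,J_h$ of Theorem~\ref{regqi} for the mesh $\Th^n$, and element-wise integration by parts, exactly as in \eqref{r3ortho}--\eqref{bd4}.

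The only genuinely new point is the two remaining bounds \eqref{space2} and $\|\delta_t\bm{S}_3^n\|_\prime^2\lesssim\mathcal{E}_{3,t}^n$, where two consecutive meshes must be handled simultaneously. The difficulty is a mismatch of meshes: the discrete backward differences $\delta_t\bm{u}_h^n,\delta_tp_h^n,\delta_t\bm{w}_h^n$ are piecewise polynomial only with respect to the common refinement $\Th^n\vee\Th^{n-1}$, whereas subtracting \eqref{fully} at the two time levels $n$ and $n-1$ yields Galerkin orthogonality of $\delta_t\bm{S}_1^n$ (resp.\ $\delta_t\bm{S}_3^n$) only against $\bm{V}_h^n\cap\bm{V}_h^{n-1}$ (resp.\ $\bm{W}_h^n\cap\bm{W}_h^{n-1}$). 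I would reconcile the two as follows. First, perform the element-wise integration by parts over the refinement $\Th^n\vee\Th^{n-1}$, on whose elements and faces $\divg\bm{\sigma}(\delta_t\bm{u}_h^n)$, $\nabla\delta_tp_h^n$, $\curl(\bm{K}^{-1}\delta_t\bm{w}_h^n)$ and the associated jumps are well defined; this is precisely why $\mathcal{E}_{1,t}^n$ and $\mathcal{E}_{3,t}^n$ are defined over $\Th^n\vee\Th^{n-1}$. Second, build the interpolant subtracted from the test function on the coarsening $\Th^n\wedge\Th^{n-1}$: a Cl\'ement interpolation associated with $\Th^n\wedge\Th^{n-1}$ for $\delta_t\bm{S}_1^n$, and the quasi-interpolations $\Pi_h,J_h$ of Theorem~\ref{regqi} for $\Th^n\wedge\Th^{n-1}$ for $\delta_t\bm{S}_3^n$. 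Since the continuous piecewise-linear, Raviart--Thomas and Brezzi--Douglas--Marini spaces are nested under refinement and $\bm{W}_h^0\subseteq\bm{W}_h$, these coarse-mesh interpolants take values in $\bm{V}_h^n\cap\bm{V}_h^{n-1}$, resp.\ $\bm{W}_h^n\cap\bm{W}_h^{n-1}$, so the orthogonality just recorded annihilates them. Third, transfer the interpolation-error estimate — \eqref{interperror} for $\Th^n\wedge\Th^{n-1}$ in the case of $\delta_t\bm{S}_3^n$, and the analogous Cl\'ement bound for $\delta_t\bm{S}_1^n$ — from the coarsening to the refinement used in the integration by parts: since each $K\in\Th^n\wedge\Th^{n-1}$ is split into finitely many $K^\prime\in\Th^n\vee\Th^{n-1}$ with $h_K/h_{K^\prime}\leq C_{\text{ratio}}$ by \eqref{samesize}, the weighted element and face sums over $\Th^n\vee\Th^{n-1}$ are dominated by the corresponding sums over $\Th^n\wedge\Th^{n-1}$ up to a factor depending only on $C_{\text{ratio}}$ and shape regularity. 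Assembling these ingredients with Cauchy--Schwarz (and \eqref{Korn} for $\delta_t\bm{S}_1^n$) yields $\ab{\delta_t\bm{S}_1^n}{\bm{v}}\lesssim(\mathcal{E}_{1,t}^n)^{1/2}\|\bm{v}\|_a$ and $\ab{\delta_t\bm{S}_3^n}{\bm{z}}\lesssim(\mathcal{E}_{3,t}^n)^{1/2}\|\bm{z}\|_{\bm{W}}$, which are \eqref{space2} and $\|\delta_t\bm{S}_3^n\|_\prime^2\lesssim\mathcal{E}_{3,t}^n$.

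The routine parts are exactly those already carried out in Theorem~\ref{semiresult}: the integration-by-parts identities, the collection of element and jump residuals, and the scaled trace inequalities. The step that requires care — and the main obstacle — is the two-mesh bookkeeping for $\delta_t\bm{S}_1^n$ and $\delta_t\bm{S}_3^n$: one must match the mesh on which the discrete difference is piecewise smooth (the refinement $\Th^n\vee\Th^{n-1}$, on which the integration by parts and hence the definitions of $\mathcal{E}_{1,t}^n,\mathcal{E}_{3,t}^n$ rest) with the mesh on which a Galerkin-orthogonal interpolant is available (the coarsening $\Th^n\wedge\Th^{n-1}$), the passage between the two being absorbed into the constant via the uniform local scale equivalence \eqref{samesize}. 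One minor caveat worth flagging in the write-up: for $\ab{\bm{S}_1^n}{\bm{v}_h}$ and $\ab{\delta_t\bm{S}_1^n}{\bm{v}_h}$ to vanish exactly on the discrete spaces it is convenient to take $\bm{f}_h^n$ (and $g_h^n$) to be $L^2(\Omega)$-projections of $\bm{f}^n$ (and $g^n$); with a general data approximation a harmless extra oscillation term $\|\bm{f}^n-\bm{f}_h^n\|$ would appear, which plays no role here and belongs instead to the data oscillation in the final reliability estimate.
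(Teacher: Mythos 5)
Your proposal is correct and follows essentially the same route as the paper: Cl\'ement (resp.\ the commuting quasi-interpolations of Theorem~\ref{regqi}) on $\Th^n$ for $\bm{S}^n_1$, $\bm{S}^n_3$, the trivial Cauchy--Schwarz bound for $S^n_2$, and, for the time-differenced residuals, an interpolant built on the coarsening $\Th^n\wedge\Th^{n-1}$ combined with integration by parts over the refinement $\Th^n\vee\Th^{n-1}$ and the scale equivalence \eqref{samesize}. The two-mesh bookkeeping you flag as the main point is exactly the mechanism the paper uses (it states it tersely; you spell it out), and your remark on taking $\bm{f}_h^n$, $g_h^n$ as $L^2$-projections is consistent with the hypothesis of Theorem~\ref{fullyresult}.
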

\begin{proof}
For $\bm{v}\in\bm{V}$, let $\bm{v}_h$ be the Cl\'ement interpolant on $\Th^n$. It follows from \eqref{fully1} and element-wise integration by parts that
\begin{equation}\label{repS1}
    \begin{aligned}
    \ab{\bm{S}^n_1}{\bm{v}}&=\ab{\bm{S}^n_1}{\bm{v}-\bm{v}_h}=\sum_{K\in\Th}\int_K(\bm{f}_h^n+\divg\bm{\sigma}(\bm{u}_h)-\alpha\nabla p_h)\cdot(\bm{v}-\bm{v}_h)\\
    &-\sum_{F\in\mathcal{F}(\Th)}\int_F\big([\bm{\sigma}(\bm{u}_h)-\alpha p_h\bm{I}]\bm{n}_F\big)\cdot(\bm{v}-\bm{v}_h).
    \end{aligned}
\end{equation}
Using \eqref{repS1} and the same analysis of estimating $\|\bm{r}_1\|_\prime$ in Theorem \ref{semiresult}, we obtain
\begin{equation*}
    \|\bm{S}^n_1\|_\prime=\sup_{\bm{v}\in\bm{V},\|\bm{v}\|_a=1}\ab{\bm{S}^n_1}{\bm{v}}\lesssim(\mathcal{E}_1^n)^\frac{1}{2}.
\end{equation*}
For any $\bm{v}\in\bm{V}$, let $\tilde{\bm{v}}_h$ be the Cl\'ement interpolant of $\bm{v}$ on $\Th^n\wedge\Th^{n-1}$. Using \eqref{samesize}, \eqref{fully1}, and integrating by parts over $\Th^n\vee\Th^{n-1}$, and following again the same analysis of estimating $\|\bm{r}_1\|_\prime$ in Theorem \ref{semiresult}, we obtain a similar estimate:
\begin{equation*}
\begin{aligned}
        &\|\delta_t\bm{S}^n_1\|_\prime=\sup_{\bm{v}\in\bm{V},\|\bm{v}\|_a=1}\ab{\delta_t\bm{S}^n_1}{\bm{v}}=\sup_{\bm{v}\in\bm{V},\|\bm{v}\|_a=1}\ab{\delta_t\bm{S}^n_1}{\bm{v}-\tilde{\bm{v}}_h}\\
        &=\sup_{\bm{v}\in\bm{V},\|\bm{v}\|_a=1}(\delta_t\bm{f}_h^n,\bm{v}-\tilde{\bm{v}}_h)-a(\delta_t\bm{u}^n_h,\bm{v}-\tilde{\bm{v}}_h)+b(\bm{v}-\tilde{\bm{v}}_h,\delta_tp^n_h)\lesssim(\mathcal{E}_{1,t}^n)^\frac{1}{2}.
\end{aligned}
\end{equation*}
The remaining estimates can be proved in the same way.
\end{proof}

Let $\|\bm{f}-\bm{f}_h^i\|_{\prime}=\|\bm{f}-\bm{f}_h^i\|_{\bm{V}^\prime}$ and $\|\partial_t\bm{f}-\delta_t\bm{f}_h^i\|_{\prime}=\|\partial_t\bm{f}-\delta_t\bm{f}_h^i\|_{\bm{V}^\prime}$.
We present the first main result of this paper in the following theorem.
\begin{theorem}\label{fullyresult}
For $1\leq i\leq n$, let $\bm{f}_h^i$ be the $L^2$-projection of $\bm{f}^i$ onto $\bm{V}_h^i$, $g_h^i$ the $L^2$-projection of $g^i$ onto $Q_h^i$. There exists a constant $C_{\text{drel}}$ dependent only on $\mu$, $\alpha$, $\beta$, $\bm{K}$, $\Omega,$ $C_{\text{shape}}, C_{\text{ratio}}$ such that for $n=1,2,\ldots, N,$ the error defined in \eqref{error2} satisfies
\begin{equation*}
    \begin{aligned}
    &\vertiii{(E_u,E_p,E_w)(t_n)}^2+\|(E_u,E_p,E_w)\|_{L^2(0,t_n;X)}^2\\
    &\quad\leq C_{\text{drel}}\big\{\vertiii{(E_u,E_p,E_w)(0)}^2+\big(\sum_{i=1}^n\tau_i\widetilde{\mathcal{E}}^i_{\text{time}}+\tau_i(\mathcal{E}_2^i)^\frac{1}{2}+\widetilde{\mathcal{E}}^i_{\text{data}}\big)^2\\
    &\qquad+\sum_{i=1}^n\tau_i\mathcal{E}^i_{\text{time}}+\tau_i\mathcal{E}^i_{\text{space}}+\mathcal{E}^i_{\text{data}}\},
    \end{aligned}
\end{equation*}
where
\begin{align*}
\widetilde{\mathcal{E}}^i_{\text{time}}&=\|\divg(\bm{w}_h^i-\bm{w}_h^{i-1})\|,\quad\widetilde{\mathcal{E}}^i_{\text{data}}=\int_{t_{i-1}}^{t_i}\|g-g_h^i\|dt,\\
    \mathcal{E}^i_{\text{time}}&=\|\bm{u}_h^i-\bm{u}_h^{i-1}\|^2_a+\|p_h^i-p_h^{i-1}\|^2_c+\|\bm{w}_h^i-\bm{w}_h^{i-1}\|^2_{\bm{W}},\\
    \mathcal{E}_{\text{space}}^i&=\mathcal{E}_1^i+\mathcal{E}_{1,t}^i+\mathcal{E}_2^i+\mathcal{E}_3^i+\mathcal{E}_{3,t}^i,\\
    \mathcal{E}^i_{\text{data}}&=\int_{t_{i-1}}^{t_i}\big(\|\bm{f}-\bm{f}_h^i\|^2_{\prime}+\|\partial_t\bm{f}-\delta_t\bm{f}_h^i\|_{\prime}^2+\|g-g_h^i\|^2\big)dt.
\end{align*}
\end{theorem}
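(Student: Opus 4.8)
The plan is to invoke Corollary \ref{cor} (applied to the fully discrete error equation \eqref{fullyerror} in place of \eqref{semierr}, which is legitimate since \eqref{fullyerror} has exactly the same structure with residuals $\bm{R}_1,R_2,\bm{R}_3$) and then bound the five residual quantities appearing on its right-hand side, namely
$\int_0^{t_n}\|R_2\|_\prime\,ds$, $\int_0^{t_n}\|\bm{R}_1\|_\prime^2\,ds$, $\int_0^{t_n}\|\partial_t\bm{R}_1\|_\prime^2\,ds$, $\int_0^{t_n}\|R_2\|_\prime^2\,ds$, $\int_0^{t_n}\|\bm{R}_3\|_\prime^2\,ds$, and $\int_0^{t_n}\|\partial_t\bm{R}_3\|_\prime^2\,ds$, by the data, spatial, and temporal quantities in the statement. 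The basic device is the splitting \eqref{split} and \eqref{splitdtR1}--\eqref{splitdtR3}: on each slab $[t_{i-1},t_i]$ write $\bm{R}_1=(\bm{f}-\bm{f}_h^i)+\bm{S}_1^i+\bm{T}_1^i$, etc., so that $\|\bm{R}_1\|_\prime\le\|\bm{f}-\bm{f}_h^i\|_\prime+\|\bm{S}_1^i\|_\prime+\|\bm{T}_1^i\|_\prime$ and similarly for the others.

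The spatial parts are already controlled by Lemma \ref{spacelemma}: $\|\bm{S}_1^i\|_\prime^2\lesssim\mathcal{E}_1^i$, $\|\delta_t\bm{S}_1^i\|_\prime^2\lesssim\mathcal{E}_{1,t}^i$, $\|\bm{S}_2^i\|_\prime^2\lesssim\mathcal{E}_2^i$, $\|\bm{S}_3^i\|_\prime^2\lesssim\mathcal{E}_3^i$, $\|\delta_t\bm{S}_3^i\|_\prime^2\lesssim\mathcal{E}_{3,t}^i$; integrating over $[t_{i-1},t_i]$ (these are time-independent on each slab) contributes the $\tau_i\mathcal{E}_{\text{space}}^i$ term. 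The data parts contribute $\mathcal{E}_{\text{data}}^i$ (from the $L^2(\bm{V}')$ and $L^2$ norms of $\bm{f}-\bm{f}_h^i$, $\partial_t\bm{f}-\delta_t\bm{f}_h^i$, $g-g_h^i$ in the squared integrals) and $\widetilde{\mathcal{E}}_{\text{data}}^i=\int_{t_{i-1}}^{t_i}\|g-g_h^i\|\,dt$ (from the first-power integral $\int_0^{t_n}\|R_2\|_\prime\,ds$); note $\|R_2\|_\prime$ dualizes against the $\|\cdot\|_c$-norm, so $\|g-g_h^i\|_\prime\lesssim\|g-g_h^i\|$, and the $L^2$-projection choice makes $g-g_h^i$ orthogonal to $Q_h^i$ so the Clément argument of Lemma \ref{spacelemma} still applies to $\bm{S}_2^i$. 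The key point for the temporal parts is that on $[t_{i-1},t_i]$ the linear interpolant satisfies $\bm{u}_h^i-\bm{u}_h^\tau(t)=\frac{t_i-t}{\tau_i}(\bm{u}_h^i-\bm{u}_h^{i-1})$, with $|\frac{t_i-t}{\tau_i}|\le1$, and likewise for $p_h,\bm{w}_h$; hence $\|\bm{T}_1^i(t)\|_\prime\lesssim\|\bm{u}_h^i-\bm{u}_h^{i-1}\|_a+\|p_h^i-p_h^{i-1}\|_c$ and $\|\bm{T}_3^i(t)\|_\prime\lesssim\|\bm{w}_h^i-\bm{w}_h^{i-1}\|_{\bm{W}}+\|p_h^i-p_h^{i-1}\|_c$, all dominated by $(\mathcal{E}_{\text{time}}^i)^{1/2}$, so their squared slab-integrals give $\tau_i\mathcal{E}_{\text{time}}^i$. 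For $T_2^i$ one has $\ab{T_2^i}{q}=d(\bm{w}_h^i-\bm{w}_h^\tau,q)=\frac{t_i-t}{\tau_i}(\divg(\bm{w}_h^i-\bm{w}_h^{i-1}),q)$, so $\|T_2^i\|_\prime\lesssim\|\divg(\bm{w}_h^i-\bm{w}_h^{i-1})\|=\widetilde{\mathcal{E}}_{\text{time}}^i$; this is the term that feeds the first-power sum $\sum_i\tau_i\widetilde{\mathcal{E}}_{\text{time}}^i$ (matching the structure of Lemma \ref{energy}, where $F_2$ appears both squared and to the first power), while its square contributes to $\tau_i\mathcal{E}_2^i$ via $\mathcal{E}_{\text{space}}^i$ or can be absorbed. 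Finally $\partial_t\bm{R}_1$ and $\partial_t\bm{R}_3$: by \eqref{splitdtR1}--\eqref{splitdtR3}, $\partial_t\bm{R}_1=(\partial_t\bm{f}-\delta_t\bm{f}_h^i)+\delta_t\bm{S}_1^i$ and $\partial_t\bm{R}_3=\delta_t\bm{S}_3^i$ on the open slab (the temporal residuals are piecewise linear, so their $t$-derivatives are the backward differences already named), giving the $\mathcal{E}_{1,t}^i$, $\mathcal{E}_{3,t}^i$, and $\|\partial_t\bm{f}-\delta_t\bm{f}_h^i\|_\prime^2$ contributions. Assembling all slabs $i=1,\dots,n$ and renaming the aggregated constant $C_{\text{drel}}$ finishes the argument.

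The main obstacle — and the reason the Clément interpolant in Lemma \ref{spacelemma} is taken on the coarsening $\Th^n\wedge\Th^{n-1}$ while integration by parts is performed over the refinement $\Th^n\vee\Th^{n-1}$ — is the mesh change between time levels: $\delta_t\bm{S}_1^i$ and $\delta_t\bm{S}_3^i$ mix quantities living on $\Th^i$ and $\Th^{i-1}$, and one must choose a single mesh on which $\bm{u}_h^i-\bm{u}_h^{i-1}$, $p_h^i-p_h^{i-1}$, $\bm{w}_h^i-\bm{w}_h^{i-1}$ are all piecewise polynomial in order to do the element-wise integration by parts and bound jumps. The refinement $\Th^i\vee\Th^{i-1}$ serves that purpose, and the uniform comparability \eqref{samesize} of element sizes between $\Th^i\vee\Th^{i-1}$ and $\Th^i\wedge\Th^{i-1}$ is exactly what lets the Clément approximation estimates on the coarse mesh be transferred to the fine mesh without losing powers of $h_K$; this has already been carried out inside the proof of Lemma \ref{spacelemma}, so here it only needs to be cited. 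A secondary bookkeeping point is to make sure the first-power term $\int_0^{t_n}\|R_2\|_\prime\,ds=\sum_i\int_{t_{i-1}}^{t_i}\|R_2\|_\prime\,ds\le\sum_i\tau_i\big(\widetilde{\mathcal{E}}_{\text{time}}^i+(\mathcal{E}_2^i)^{1/2}\big)+\sum_i\widetilde{\mathcal{E}}_{\text{data}}^i$ is grouped before squaring, exactly reproducing the $\big(\sum_i\tau_i\widetilde{\mathcal{E}}_{\text{time}}^i+\tau_i(\mathcal{E}_2^i)^{1/2}+\widetilde{\mathcal{E}}_{\text{data}}^i\big)^2$ term in the statement, rather than squaring term-by-term.
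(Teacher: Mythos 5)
Your proposal is correct and follows essentially the same route as the paper: apply the energy estimate of Lemma \ref{energy} to the error equation \eqref{fullyerror}, split each residual via \eqref{split} and \eqref{splitdtR1}--\eqref{splitdtR3} into data, spatial, and temporal parts, control the spatial parts by Lemma \ref{spacelemma}, control the temporal parts through the explicit interpolant identity \eqref{uttau}, and keep the $\int\|R_2\|_\prime\,ds$ contribution grouped before squaring. The only detail worth tightening is your remark that the square of $T_2^i$ ``contributes to $\tau_i\mathcal{E}_2^i$ or can be absorbed'': it is absorbed into $\tau_i\mathcal{E}^i_{\text{time}}$ via $\|\divg(\bm{w}_h^i-\bm{w}_h^{i-1})\|\le\|\bm{w}_h^i-\bm{w}_h^{i-1}\|_{\bm{W}}$, exactly as in the paper's estimate \eqref{R2}.
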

\begin{proof}
Applying Lemma \ref{energy} to \eqref{fullyerror} yields
\begin{equation*}
    \begin{aligned}
    &\vertiii{(E_u,E_p,E_w)(t_n)}^2+\|(E_u,E_p,E_w)\|_{L^2(0,t_n;X)}^2\\
    &\leq C_{\text{stab}}\big(\vertiii{(E_u,E_p,E_w)(0)}^2+\left(\sum_{i=1}^n\int_{t_{i-1}}^{t_i}\|R_2\|_\prime dt\right)^2\\
    &+\sum_{i=1}^n\int_{t_{i-1}}^{t_i}\|\bm{R}_1\|_\prime^2+\|\partial_t\bm{R}_1\|_\prime^2+\|R_2\|_\prime^2+\|\bm{R}_3\|_\prime^2+\|\partial_t\bm{R}_3\|_\prime^2\big)dt.
    \end{aligned}
\end{equation*}
For any $\bm{v}\in\bm{V}$,
the continuity of $a(\cdot,\cdot)$ and $b(\cdot,\cdot)$ implies that
\begin{equation}\label{parta}
    \|\bm{T}^i_1\|_\prime=\sup_{\bm{v}\in\bm{V},\|\bm{v}\|_a=1}\ab{\bm{T}^i_1}{\bm{v}}\lesssim\|{\bm{u}}^i_h-\bm{u}^\tau_{h}\|_a+\|{p}^i_h-p^\tau_{h}\|_c.
\end{equation}
A combination of \eqref{split1}, \eqref{space1} and \eqref{parta} then shows that when $t\in[t_{i-1},t_i]$,
\begin{equation}\label{ineterR1bd}
    \begin{aligned}
    \|\bm{R}_1\|_\prime&\lesssim\|\bm{f}-\bm{f}_h^i\|_{\prime}+\|{\bm{u}}^i_h-\bm{u}^\tau_{h}\|_a+\|{p}^i_h-p^\tau_{h}\|_c+(\mathcal{E}_1^i)^\frac{1}{2}.
    \end{aligned}
\end{equation}
For $t\in[t_{i-1},t_i]$, it is readily checked that
\begin{equation}\label{uttau}
\begin{aligned}
    \|{\bm{u}}^i_h-\bm{u}^\tau_{h}\|_a&=\frac{t_i-t}{\tau_i}\|{\bm{u}}^i_h-\bm{u}^{i-1}_{h}\|_a,\\
    \|p^i_h-p^\tau_{h}\|_c&=\frac{t_i-t}{\tau_i}\|p^i_h-p^{i-1}_{h}\|_c.
\end{aligned}
\end{equation}
Integrating \eqref{ineterR1bd} over $[t_{i-1},t_i]$ and using \eqref{uttau}, we obtain
\begin{equation}\label{R1}
    \int_{t_{i-1}}^{t_i}\|\bm{R}_1\|^2_\prime dt\lesssim\int_{t_{i-1}}^{t_i}\|\bm{f}-\bm{f}_h^i\|^2_{\prime}dt+\tau_i\|\bm{u}^i_h-\bm{u}^{i-1}_{h}\|^2_a+\tau_i\|{p}^i_h-p^{i-1}_{h}\|^2_c+\tau_i\mathcal{E}_1^i.
\end{equation}
On the other hand,
using \eqref{splitdtR1} and \eqref{space2}, we obtain for $t\in[t_{i-1},t_i]$,
\begin{equation}\label{dtR1}
    \|\partial_t\bm{R}_1\|_\prime\lesssim\|\partial_t\bm{f}-\delta_t\bm{f}_h^i\|_{\prime}+(\mathcal{E}_{1,t}^i)^\frac{1}{2}.
\end{equation}
Similarly, using \eqref{split}, \eqref{splitdtR3}, Lemma \ref{spacelemma}, and \eqref{uttau}, one can estimate $\bm{R}_3$, $\partial_t\bm{R}_3$, $R_2$ and obtain the following bounds
\begin{equation}\label{R3}
    \begin{aligned}
\int_{t_{i-1}}^{t_i}\|\bm{R}_3\|^2_\prime dt&\lesssim\tau_i\|\bm{w}^i_h-\bm{w}_h^{i-1}\|^2_{\bm{W}}+\tau_i\|p^i_h-p^{i-1}_{h}\|^2_c+\tau_i\mathcal{E}_3^i,\\
\int_{t_{i-1}}^{t_i}\|\partial_t\bm{R}_3\|^2_\prime dt&\lesssim\tau_i\mathcal{E}_{3,t}^i,
\end{aligned}
\end{equation}
and
\begin{equation}\label{R2}
    \begin{aligned}
    \int_{t_{i-1}}^{t_i}\|R_2\|_\prime dt&\lesssim\int_{t_{i-1}}^{t_i}\big\|g-g_h^i\|dt+\tau_i\|\divg(\bm{w}^{i-1}_{h}-\bm{w}_h^i)\|+\tau_i(\mathcal{E}_2^i)^\frac{1}{2},\\
    \int_{t_{i-1}}^{t_i}\|R_2\|^2_\prime dt&\lesssim\int_{t_{i-1}}^{t_i}\big\|g-g_h^i\|^2dt+\tau_i\|\divg(\bm{w}^{i-1}_{h}-\bm{w}_h^i)\|^2+\tau_i\mathcal{E}_2^i.
\end{aligned}
\end{equation}
Combining \eqref{R1}--\eqref{R2} completes the proof.
\end{proof}
\begin{remark}
The first two terms in $\mathcal{E}^i_{\text{data}}$ can be further estimated by
\begin{align*}
    \|\bm{f}-\bm{f}_h^i\|_{\prime}&\lesssim\sum_{K\in \Th^i}h_K^2\|\bm{f}-\bm{f}_h^i\|^2_K,\\
    \|\partial_t\bm{f}-\delta_t\bm{f}_h^i\|_{\prime}&\lesssim\sum_{K\in \Th^i\wedge\Th^{i-1}}h_K^2\|\partial_t\bm{f}-\delta_t\bm{f}_h^i\|^2_K.
\end{align*}
\end{remark}

\section{Lower bound}\label{seclower}
In this section, we show that $\tau_n\mathcal{E}^n_{\text{time}}$ and $\tau_n\mathcal{E}^n_{\text{space}}$ are lower bounds of the space-time discretization error of the fully discrete scheme \eqref{fully}. First we present a lemma comparing the spatial residual with the spatial error indicators. Since the spatial error estimators and residuals are time-independent, the proof follows from the well-known Verf\"urth bubble function technique for a posteriori error estimates for stationary Stokes and Poisson's equations, see, e.g., \cite{Verfurth1991,Alonso1996,DH2014}. Throughout the rest, $\underline{C}$ is a generic constant that  depends only on $\lambda,$ $\mu$, $\alpha,$ $\beta,$ $\bm{K}$, $\Omega$, $\Gamma_1,$ $C_{\text{shape}}$, $C_{\text{ratio}}$. Hence the constant in some lower bounds may not be locking free.
\begin{lemma}\label{lowerspace}
Let $\lambda$, $\mu$, $\bm{K}$ be piecewise constants over $\Th^n$. For $1\leq n\leq N$, it holds that
\begin{subequations}
\begin{align}
    \mathcal{E}_1^n&\leq \underline{C}\|\bm{S}^n_1\|_{\prime}^2,\label{lowerS1}\\
    \mathcal{E}_{1,t}^n&\leq \underline{C}\|\delta_t\bm{S}^n_1\|^2_{\prime},\label{lowerS2}\\
    \mathcal{E}_2^n&\lesssim\|S_2^n\|_{\prime}^2,\\
    \mathcal{E}_3^n&\lesssim\|\bm{S}^n_3\|_\prime^2,\\
    \mathcal{E}_{3,t}^n&\lesssim \|\delta_t\bm{S}^n_3\|^2_\prime.
\end{align}
\end{subequations}
\end{lemma}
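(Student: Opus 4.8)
The plan is to prove each of the five lower bounds by the standard Verf\"urth bubble-function technique, exactly as for the stationary Stokes and Poisson problems, since the spatial residuals $\bm{S}^n_1,S^n_2,\bm{S}^n_3$ and the spatial indicators $\mathcal{E}_1^n,\mathcal{E}_2^n,\mathcal{E}_3^n$ are all defined at a fixed time level and carry no time dependence. The key observation is that, after element-wise integration by parts, each residual functional acts on a test function as an $L^2$-pairing against the element interior residuals plus a sum of face-jump pairings — this is precisely the representation already written down in \eqref{repS1} for $\bm{S}^n_1$ (and the analogous ones for $S^n_2$ and $\bm{S}^n_3$ that appear implicitly in the proof of Theorem~\ref{semiresult}). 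One then localizes: to bound an interior residual term $\rho_K$ on an element $K$, test with $\rho_K\varphi_K$ where $\varphi_K$ is the interior (volume) bubble supported on $K$; to bound a face-jump term $\rho_F$ on a face $F$, test with an extension of $\rho_F\varphi_F$ where $\varphi_F$ is the face bubble supported on the patch $\omega_F$. Standard scaled norm-equivalence inequalities on bubble-weighted polynomial spaces, together with inverse estimates, yield $h_K\|\rho_K\|_K\lesssim\|\bm{S}^n_1\|_{\prime}$ and $h_K^{1/2}\|\rho_F\|_F\lesssim\|\bm{S}^n_1\|_{\prime}$ (after absorbing the interior contribution already controlled), and summing over $K$ and $F$ gives \eqref{lowerS1}.

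For \eqref{lowerS2}, the identical argument is applied to $\delta_t\bm{S}^n_1$ on the mesh $\Th^n\vee\Th^{n-1}$: since $\delta_t\bm{u}_h^n$, $\delta_tp_h^n$ and $\delta_t\bm{f}_h^n$ are all piecewise polynomials on this common refinement, the interior and jump residuals for $\delta_t\bm{S}^n_1$ are exactly the discrete finite element quantities appearing in $\mathcal{E}_{1,t}^n$, and bubble functions on the elements and faces of $\Th^n\vee\Th^{n-1}$ furnish the lower bound; the uniform size comparison \eqref{samesize} ensures the mesh-size weights are comparable across the refinement. The two remaining estimates for $\mathcal{E}_3^n$ and $\mathcal{E}_{3,t}^n$ involve the $H(\curl)$-type indicator: here in addition to the interior residual $\bm{K}^{-1}\bm{w}_h+\nabla p_h$ and the potential jump $[p_h]$, one must also control the curl term $\curl(\bm{K}^{-1}\bm{w}_h)$ and the tangential jump $[(\bm{K}^{-1}\bm{w}_h)\times\bm{n}_F]$; these are handled by the corresponding $H(\curl)$ bubble-function arguments (testing with $\curl$ of appropriately scaled bubbles), which is precisely the content of the references \cite{Verfurth1991,Alonso1996,DH2014} cited just before the lemma. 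The estimate for $\mathcal{E}_2^n$ is the simplest, since $\mathcal{E}_2^n$ contains only element-interior terms and no jumps, so a single volume-bubble test on each $K$ suffices.

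The main technical point to be careful about — and the one place where the constant degrades — is the locking dependence. Since we test $\bm{S}^n_1$ against functions measured in the $\|\cdot\|_a$-norm, inverting the bubble-function inequalities forces the constant $\underline C$ in \eqref{lowerS1}–\eqref{lowerS2} to pick up the ratio $\lambda/\mu$ through the coercivity/continuity constants of $a(\cdot,\cdot)$; this is why the lemma is stated with $\underline C$ depending on $\lambda$ (in contrast to the $\lesssim$ used for the other three), and why the hypothesis that $\lambda,\mu,\bm{K}$ be piecewise constant on $\Th^n$ is imposed — it guarantees that the interior residuals and jumps are genuine piecewise polynomials so the norm-equivalence on bubble-weighted polynomial spaces applies verbatim. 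Everything else is routine: assemble the local bounds, sum over elements and faces using shape-regularity and the finite overlap of the patches $\omega_F$, and absorb the lower-order interior terms that reappear when bounding the jump contributions.
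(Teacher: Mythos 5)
Your proposal is correct and follows essentially the same route as the paper: the paper also invokes the Verf\"urth bubble-function technique, constructing the test function $\bm{v}=\gamma_1\sum_{K}h_K^2R_K\phi_K+\gamma_2\sum_{F}h_FJ_F\phi_F$ and choosing $\gamma_1,\gamma_2$ via Young's inequality (citing Lemma~5.1 of \cite{Verfurth2003}), which is just the aggregated form of your term-by-term localization with volume and face bubbles. Your additional remarks on the $H(\curl)$-type terms in $\mathcal{E}_3^n$, the role of the mesh $\Th^n\vee\Th^{n-1}$ with \eqref{samesize} for \eqref{lowerS2}, and the $\lambda$-dependence of $\underline{C}$ are all consistent with what the paper states or leaves to the reader as ``analogous.''
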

\begin{proof}
To prove \eqref{lowerS1}, it suffices to find $\bm{v}\in\bm{V}$, such that
\begin{equation*}
    \mathcal{E}_1^n\leq \underline{C}\ab{\bm{S}_1^n}{\bm{v}},\quad\|\bm{v}\|^2_a\leq \underline{C}\mathcal{E}_1^n.
\end{equation*}
For each $K\in\Th$, let $R_K=(\bm{f}_h^n+\divg\bm{\sigma}(\bm{u}_h)-\alpha\nabla p_h)|_K$ and let $\phi_K$ denote the volume bubble function supported on $K$ so that the maximum is $1.$. For each $F\in\mathcal{F}(\Th)$, let $J_F=-[\bm{\sigma}(\bm{u}_h)-\alpha p_h\bm{I}]|_F\bm{n}_F$, and recall that $\phi_F$ is the face bubble function supported on union of neighboring elements of $F$. The desired $\bm{v}$ is then defined as
\begin{equation*}
    \bm{v}=\gamma_1\sum_{K\in\Th}h_K^2R_K\phi_K+\gamma_2\sum_{F\in\mathcal{F}(\Th)}h_FJ_F\phi_F,
\end{equation*}
where $h_F$ is the diameter of $F,$ and $\gamma_1, \gamma_2$ are undetermined constants. Using the Cauchy--Schwarz inequality and finite overlapping of supports of $\{\phi_K\}$ and $\{\phi_F\}$, one case easily show that $\|\bm{v}\|^2_a\leq \underline{C}\mathcal{E}_{1}^n$. On the other hand, \eqref{repS1} implies
\begin{equation*}
    \ab{\bm{S}_1^n}{\bm{v}}=\sum_{K\in\Th}\int_KR_K\cdot\bm{v}+\sum_{F\in\mathcal{F}(\Th)}\int_FJ_F\cdot\bm{v}.
\end{equation*}
$\ab{\bm{S}_1^n}{\bm{v}}\geq \underline{C}\mathcal{E}_1^n$ then
follows from Young's inequality and suitablly chosen $\gamma_1$,
$\gamma_2$, see, e.g., Lemma~5.1 in \cite{Verfurth2003} for details. Other
lower bounds can be shown in an analogous fashion.
\end{proof}
\begin{remark}
Based on the $\|\cdot\|_a$-norm, it seems that the dependence on $\lambda$ in lower bounds \eqref{lowerS1} and \eqref{lowerS2} cannot be avoided. To obtain an error estimator that is a \emph{robust} lower bound, one can apply the analysis here to the four- or five-field formulation \cite{LJJ2016,ARN2019} in Biot's consolidation model.
\end{remark}

We present the second main result in the following theorem. Similar technique in the proof was used in \cite{Verfurth2003} for proving the lower bound in a posteriori error estimation for the primal formulation of the heat equation.
\begin{theorem}\label{lower}
Let $\lambda,$ $\mu$, $\bm{K}$ be piecewise constants on $\Th^n$. For $n=1,2,\ldots, N,$
\begin{align*}
    &\tau_n\mathcal{E}^n_{\text{time}}+\tau_n\mathcal{E}^n_{\text{space}}\leq \underline{C}\big\{\|(E_u,E_p,E_w)\|^2_{L^2(t_{n-1},t_n;X)}\\
    &\qquad+\int_{t_{n-1}}^{t_n}\big(\|\bm{f}-\bm{f}_h^n\|^2_{\prime}+\|\partial_t\bm{f}-\delta_t\bm{f}_h^n\|_{\prime}^2+\|g-g_h^n\|^2\big)dt\big\}.
\end{align*}
\end{theorem}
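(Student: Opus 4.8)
The plan is to establish the two families of lower bounds separately. For the spatial part $\tau_n\mathcal{E}^n_{\text{space}}=\tau_n(\mathcal{E}_1^n+\mathcal{E}_{1,t}^n+\mathcal{E}_2^n+\mathcal{E}_3^n+\mathcal{E}_{3,t}^n)$, I would invoke Lemma~\ref{lowerspace} to replace each spatial indicator by the corresponding spatial-residual dual norm, so that it remains to bound $\|\bm{S}^n_1\|_\prime$, $\|\delta_t\bm{S}^n_1\|_\prime$, $\|S^n_2\|_\prime$, $\|\bm{S}^n_3\|_\prime$, $\|\delta_t\bm{S}^n_3\|_\prime$ (all time-independent on $[t_{n-1},t_n]$) by the $L^2(t_{n-1},t_n;X)$-error and data oscillation. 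Using the splitting~\eqref{split}, write $\bm{S}^n_1=\bm{R}_1-(\bm{f}-\bm{f}_h^n)-\bm{T}^n_1$, and similarly for the others; the residuals $\bm{R}_i$ are expressed through the true errors $(E_u,E_p,E_w)$ via the error equations~\eqref{fullyerror}, while the temporal residuals $\bm{T}^n_i$ are controlled by $\|\bm{u}^n_h-\bm{u}^\tau_h\|_a$, $\|p^n_h-p^\tau_h\|_c$, $\|\bm{w}^n_h-\bm{w}^\tau_h\|_{\bm{W}}$ as in~\eqref{parta}. The point is that $\|\bm{u}^n_h-\bm{u}^\tau_h\|_a=\frac{t_n-t}{\tau_n}\|\bm{u}^n_h-\bm{u}^{n-1}_h\|_a$ vanishes linearly at $t_n$, so after multiplying by $\tau_n$ and integrating the weight $(t_n-t)^2/\tau_n^2$ over $[t_{n-1},t_n]$ produces only a $\tfrac13\tau_n$ factor times $\mathcal{E}^n_{\text{time}}$; thus the temporal-residual contamination of the spatial bound is absorbed into a \emph{small} multiple of $\tau_n\mathcal{E}^n_{\text{time}}$, which is the Verf\"urth weight-function trick. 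Hence $\tau_n\mathcal{E}^n_{\text{space}}\lesssim\|(E_u,E_p,E_w)\|^2_{L^2(t_{n-1},t_n;X)}+\text{(data terms)}+\varepsilon\,\tau_n\mathcal{E}^n_{\text{time}}$ for a constant $\varepsilon$ that does not blow up.

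For the temporal part $\tau_n\mathcal{E}^n_{\text{time}}=\tau_n(\|\bm{u}^n_h-\bm{u}^{n-1}_h\|^2_a+\|p^n_h-p^{n-1}_h\|^2_c+\|\bm{w}^n_h-\bm{w}^{n-1}_h\|^2_{\bm{W}})$, the idea is to test the temporal residuals $\bm{T}^n_i$ against well-chosen functions related to $\bm{u}^n_h-\bm{u}^{n-1}_h$, $p^n_h-p^{n-1}_h$, $\bm{w}^n_h-\bm{w}^{n-1}_h$. Concretely, from the definitions $\ab{\bm{T}^n_1}{\bm{v}}=a(\bm{u}^n_h-\bm{u}^\tau_h,\bm{v})-b(\bm{v},p^n_h-p^\tau_h)$ etc., evaluating at a time $t$ bounded away from $t_n$ (say $t=t_{n-1}$, where the weight $(t_n-t)/\tau_n=1$) and choosing $\bm{v}=\bm{u}^n_h-\bm{u}^{n-1}_h$, $q=p^n_h-p^{n-1}_h$, $\bm{z}=\bm{w}^n_h-\bm{w}^{n-1}_h$ recovers the coercive quantities $\|\bm{u}^n_h-\bm{u}^{n-1}_h\|^2_a$, $\|p^n_h-p^{n-1}_h\|^2_c$, $\|\bm{w}^n_h-\bm{w}^{n-1}_h\|^2_e$ on the left, up to cross terms handled by Young's inequality and the divergence term estimated via $\widetilde{\mathcal{E}}^n_{\text{time}}=\|\divg(\bm{w}^n_h-\bm{w}^{n-1}_h)\|$. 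On the other hand, $\bm{T}^n_i=\bm{R}_i-(\bm{f}-\bm{f}_h^n)-\bm{S}^n_i$ (and $T^n_2=R_2-(g-g_h^n)-S^n_2$), so $\|\bm{T}^n_i\|_\prime$ is bounded by the full residual (controlled by the error through~\eqref{fullyerror}), the data oscillation, and the spatial residual. Multiplying by $\tau_n$ and using the already-established spatial bound closes a coupled system: $\tau_n\mathcal{E}^n_{\text{time}}\lesssim\|(E_u,E_p,E_w)\|^2_{L^2(t_{n-1},t_n;X)}+\text{(data)}+\tau_n\mathcal{E}^n_{\text{space}}$, and combining with the spatial estimate above (where $\tau_n\mathcal{E}^n_{\text{time}}$ appears only with a small coefficient) lets me absorb and conclude. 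Throughout, the temporal norms of $\bm{R}_1,\bm{R}_3$ contribute their $\partial_t$ versions, which also reduce to errors plus $\|\partial_t\bm{f}-\delta_t\bm{f}_h^n\|_\prime$ and $\mathcal{E}_{1,t}^n,\mathcal{E}_{3,t}^n$; these are handled in the same loop.

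The main obstacle I expect is the bookkeeping of the coupled absorption argument: the spatial estimate leaks a term $\varepsilon\,\tau_n\mathcal{E}^n_{\text{time}}$ and the temporal estimate leaks a term $C\,\tau_n\mathcal{E}^n_{\text{space}}$, so one must verify that substituting one into the other yields a genuinely smaller-than-one combined coefficient (or, alternatively, prove the temporal bound directly in terms of the error and data without the $\tau_n\mathcal{E}^n_{\text{space}}$ term). The cleanest route, following Verf\"urth, is to exploit the time weight so that the spatial-to-temporal leakage carries a factor like $\tfrac13$ and the temporal bound is derived by testing at $t=t_{n-1}$ where the residual $\bm{R}_i(t_{n-1})$ itself is controlled by $\|(E_u,E_p,E_w)(t_{n-1})\|$ plus the $L^2$-in-time error over the interval (using that $\bm{R}_i$ is affine in $t$, so its value at an endpoint is bounded by its $L^2$-average plus its $\partial_t$-norm times $\tau_n$, both of which are already in the right-hand side). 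A secondary, more routine difficulty is the $H(\divg)$-flux handling: the $\|\bm{w}^n_h-\bm{w}^{n-1}_h\|_{\bm{W}}$ part includes $\|\divg(\bm{w}^n_h-\bm{w}^{n-1}_h)\|$, which is not directly produced by testing $e(\cdot,\cdot)$; this is why $\widetilde{\mathcal{E}}^n_{\text{time}}$ was separated out earlier, and one uses~\eqref{Gvar2}-type manipulation (i.e.\ the discrete equation~\eqref{fully}, written for levels $n$ and $n-1$ and subtracted) to bound $\|\delta_t\divg(\bm{w}^n_h-\bm{w}^{n-1}_h)\|$ or to reabsorb it, exactly as $\|\divg\tilde{\bm{w}}\|$ was handled in the proof of Lemma~\ref{energy}.
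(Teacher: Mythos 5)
Your overall strategy is the paper's: split each residual into data, spatial, and temporal parts, invert Lemma~\ref{lowerspace} to trade spatial indicators for spatial-residual dual norms, bound the full residuals by the errors via \eqref{fullyerror}, control $\tau_n\mathcal{E}^n_{\text{time}}$ by testing the temporal residuals, and close a coupled system by absorption. Two remarks on where your write-up diverges from (or falls short of) the actual argument.

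First, the quantitative heart of the proof is exactly the point you flag as your ``main obstacle,'' and your proposed resolution is not sufficient as stated. Integrating the bare weight $\left(\frac{t_n-t}{\tau_n}\right)^2$ over $[t_{n-1},t_n]$ gives a \emph{fixed} factor $\tfrac13$, but the leakage term in the coupled system is $\underline{C}\cdot\tfrac13\cdot\tau_n\mathcal{E}^n_{\text{time}}$ with $\underline{C}$ a generic (uncontrolled) constant coming from the bubble-function lower bounds and the continuity estimates, so $\underline{C}/3<1$ cannot be guaranteed. The paper instead inserts the \emph{tunable} weight $\phi(t)=(\alpha+1)\left(\frac{t-t_{n-1}}{\tau_n}\right)^{\alpha}$ into the integral bounding $\tau_n(\mathcal{E}^n_1+\mathcal{E}^n_2+\mathcal{E}^n_3)$; since $\phi$ has unit mean, the spatial indicators are unchanged, the residual terms pick up only a factor $(\alpha+1)$, and the temporal-residual contamination picks up the factor $F(\alpha)=\int_0^1(\alpha+1)s^{\alpha}(1-s)^2\,ds\to 0$ as $\alpha\to\infty$. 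Choosing $\alpha$ large so that $\underline{C}F(\alpha)<1$ is what makes the absorption legitimate. Without this tunable exponent the argument does not close.

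Second, for the temporal part the paper does not test at the endpoint $t=t_{n-1}$ (which would require pointwise-in-time control of the residuals that the $L^2(t_{n-1},t_n;X)$ error norm does not directly provide). It uses the inf-sup stability of the mixed bilinear form $B(\bm{w},p;\bm{z},q)=e(\bm{w},\bm{z})-d(\bm{z},p)+d(\bm{w},q)$ to produce, at each $t$, test functions giving $\|\bm{w}^n_h-\bm{w}^\tau_h\|_{\bm{W}}+\|p^n_h-p^\tau_h\|_c\lesssim\ab{T^n_2}{q}+\ab{\bm{T}^n_3}{\bm{z}}$ --- note this controls the full $\bm{W}$-norm including the divergence in one stroke, so no separate treatment of $\|\divg(\bm{w}^n_h-\bm{w}^{n-1}_h)\|$ via the discrete equations is needed --- and then integrates the squared bounds over $[t_{n-1},t_n]$, using \eqref{uttau} to convert the integral into $\tau_n\mathcal{E}^n_{\text{time}}$ up to a constant. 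Your endpoint-evaluation variant could probably be repaired along the lines you sketch, but the integrated inf-sup route is both simpler and what the divergence term actually requires. The treatment of $\mathcal{E}^n_{1,t}$ and $\mathcal{E}^n_{3,t}$ is as you say: since $\partial_t\bm{R}_1$ and $\partial_t\bm{R}_3$ contain no temporal residual, these follow directly from Lemma~\ref{lowerspace} and \eqref{splitdtR1}--\eqref{splitdtR3} with no absorption needed.
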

\begin{proof}
First by \eqref{var} and definitions of residuals in \eqref{R123}, we have
\begin{subequations}\label{Rbound}
\begin{align}
    \|\bm{R}_1\|_{\prime}&\lesssim\|E_u\|_{a}+\|E_p\|_c,\label{R1prime}\\
    \|{R}_2\|_{\prime}&\lesssim\|\partial_tE_p\|_c+\|\partial_t\divg E_u\|+\|\divg E_w\|,\label{R2prime}\\
    \|\bm{R}_3\|_{\prime}&\lesssim\|E_w\|_e+\|E_p\|_c,\\
    \|\partial_t\bm{R}_1\|_{\prime}&\lesssim\|\partial_tE_u\|_{a}+\|\partial_tE_p\|_c,\label{R4prime}\\
    \|\partial_t\bm{R}_3\|_\prime&\lesssim{\|\partial_tE_w\|_{\bm{W}^\prime}}+\|\partial_tE_p\|_c.
\end{align}
\end{subequations}
Consider the bilinear form
$$B(\bm{w},p;\bm{z},q)=e(\bm{w},\bm{z})-d(\bm{z},p)+d(\bm{w},q)$$
of the mixed formulation of the elliptic equation. Due to the inf-sup condition of $B,$
there exist $\bm{z}\in\bm{W}$ and $q\in Q$ with $\|\bm{z}\|_{\bm{W}}=1, \|q\|_c=1$ such that
\begin{align*}
    &\|\bm{w}^n_{h}-\bm{w}^\tau_{h}\|_{\bm{W}}+\|p^n_{h}-p^\tau_{h}\|_c\lesssim B(\bm{w}^n_{h}-\bm{w}^\tau_{h},p^n_{h}-p^\tau_{h};\bm{z},q)\\
    &\qquad=\ab{T^n_2}{q}+\ab{\bm{T}^n_3}{\bm{z}},\quad t\in[t_{n-1},t_n].
\end{align*}
Using the previous estimate, the triangle and Cauchy--Schwarz inequalities, we obtain
\begin{equation}\label{bdwptau}
    \begin{aligned}
       &\|\bm{w}^n_{h}-\bm{w}^\tau_{h}\|_{\bm{W}}+\|p^n_{h}-p^\tau_{h}\|_c\\
       &\lesssim\ab{R_2}{q}+\ab{\bm{R}_3}{\bm{z}}-\ab{g-g_h^n}{q}-\ab{S^n_2}{q}-\ab{\bm{S}^n_3}{\bm{z}}\\
       &\lesssim\|R_2\|_{\prime}+\|\bm{R}_3\|_{\prime}+\|g-g_h^n\|+\|\bm{S}^n_2\|_{\prime}+\|\bm{S}^n_3\|_\prime.
    \end{aligned}
\end{equation}
Setting $\bm{v}=(\bm{u}^n_{h}-\bm{u}^\tau_{h})/\|\bm{u}^n_{h}-\bm{u}^\tau_{h}\|_a$ in the definition of $\bm{T}^n_1$, we have for $t\in[t_{n-1},t_n]$,
\begin{equation}\label{bdutau}
    \begin{aligned}
       &\|\bm{u}^n_{h}-\bm{u}^\tau_{h}\|_a=\ab{\bm{T}^n_1}{\bm{v}}+b(\bm{v},p_h^n-p_h^\tau)\\
       &\quad=\ab{\bm{R}_1}{\bm{v}}-\ab{\bm{f}-\bm{f}_h^n}{\bm{v}}-\ab{\bm{S}_1^n}{\bm{v}}+b(\bm{v},p_h^n-p_h^\tau)\\
       &\quad\leq\|\bm{R}_1\|_\prime+\|\bm{f}-\bm{f}_h^n\|_{\prime}+\|\bm{S}^n_1\|_\prime+\|p_h^n-p_h^\tau\|_c.
    \end{aligned}
\end{equation}
A combination of \eqref{bdwptau}, \eqref{bdutau}, \eqref{uttau} and Lemma \ref{spacelemma} shows that
\begin{equation}\label{timebound1}
\begin{aligned}
        &\tau_n\mathcal{E}^n_{\text{time}}\lesssim\int_{t_{n-1}}^{t_n}\big(\|\bm{f}-\bm{f}_h^n\|^2_{\prime}+\|g-g_h^n\|^2+\|\bm{R}_1\|^2_{\prime}\\
    &\quad+\|R_2\|^2_{\prime}+\|\bm{R}_3\|^2_{\prime}\big)dt+\tau_n\big(\mathcal{E}^n_1+\mathcal{E}^n_2+\mathcal{E}^n_3\big).
\end{aligned}
\end{equation}
It remains to estimate $\mathcal{E}^n_1$, $\mathcal{E}^n_2$, and $\mathcal{E}^n_3$. Let
$$\phi(t)=(\alpha+1)\left(\frac{t-t_{n-1}}{\tau_n}\right)^\alpha,$$
where $\alpha>0$ is a constant that will be specified later. It follows from Lemma \ref{lowerspace} and the triangle inequality that
\begin{equation}\label{lowinter1}
    \begin{aligned}
       &\tau_n\big(\mathcal{E}^n_1+\mathcal{E}^n_2+\mathcal{E}^n_3\big)=\int_{t_{n-1}}^{t_n}\phi(t)\big(\mathcal{E}^n_1+\mathcal{E}^n_2+\mathcal{E}^n_3\big)dt\\
       &\quad\leq \underline{C}\int_{t_{n-1}}^{t_n}\phi(t)\big(\|\bm{S}_1^n\|^2_\prime+\|\bm{S}_2^n\|^2_\prime+\|\bm{S}_3^n\|^2_\prime\big)dt\\
       &\quad\leq \underline{C}(\alpha+1) \int_{t_{n-1}}^{t_n}\big(\|\bm{R}_1\|_{\prime}^2+\|\bm{f}-\bm{f}_h^n\|^2_{\prime}+\|R_2\|_{\prime}^2\\
       &\quad+\|g-g_h^n\|^2_{\prime}+\|\bm{R}_3\|_{\prime}^2\big)dt+\underline{C}\int_{t_{n-1}}^{t_n}\phi(t)\big(\|\bm{T}^n_1\|_{\prime}^2+\|T^n_2\|_{\prime}^2+\|\bm{T}^n_3\|_{\prime}^2\big)dt,
    \end{aligned}
\end{equation}
where the generic constant $\underline{C}$ is  independent of $\alpha.$
For ant $\bm{v}\in\bm{V}$ with $\|\bm{v}\|_a=1$, direct calculation shows that
\begin{equation*}
    \begin{aligned}
       &\int_{t_{n-1}}^{t_n}\phi(t)\ab{\bm{T}^n_1}{\bm{v}}^2dt=\int_{t_{n-1}}^{t_n}\phi(t)\{a(\bm{u}^n_{h}-\bm{u}_h^\tau,\bm{v})-b(\bm{v},p^n_{h}-p_h^\tau)\}^2dt\\
       &\quad=\{a(\bm{u}^n_{h}-\bm{u}_h^{n-1},\bm{v})-b(\bm{v},p^n_{h}-p_h^{n-1})\}^2\int_{t_{n-1}}^{t_n}\phi(t)\left(\frac{t_n-t}{\tau_n}\right)^2dt\\
       &\quad\leq \underline{C}\big(\|\bm{u}^n_{h}-\bm{u}_h^{n-1}\|^2_a+\|p^n_{h}-p_h^{n-1}\|^2_c\big)\tau_n\int_0^1s^\alpha(1-s)^2ds\\
       &\quad\leq\underline{C}\tau_n\mathcal{E}_{\text{time}}^nF(\alpha),
    \end{aligned}
\end{equation*}
where
$F(\alpha)=1-\frac{2(\alpha+1)}{\alpha+2}+\frac{\alpha+1}{\alpha+3}.$ Hence
\begin{equation}\label{timedual1}
\int_{t_{n-1}}^{t_n}\phi(t)\|\bm{T}^n_1\|^2_\prime dt=\sup_{\bm{v}\in\bm{V},\|\bm{v}\|_a=1}\int_{t_{n-1}}^{t_n}\phi(t)\ab{\bm{T}^n_1}{\bm{v}}^2dt\leq \underline{C}\tau_n\mathcal{E}_{\text{time}}^nF(\alpha).
\end{equation}
Similarly, we have
\begin{equation}\label{timedual23}
\int_{t_{n-1}}^{t_n}\phi(t)\big(\|T^n_2\|^2_\prime+\|\bm{T}^n_3\|^2_\prime\big)dt\leq\underline{C}\tau_n\mathcal{E}_{\text{time}}^nF(\alpha).
\end{equation}
Combining \eqref{lowinter1},  \eqref{timedual1} and \eqref{timedual23}, we obtain
\begin{equation}\label{spacebound1}
\begin{aligned}
&\tau_n\big(\mathcal{E}^n_1+\mathcal{E}^n_2+\mathcal{E}^n_3\big)\leq\underline{C}(\alpha+1)\int_{t_{n-1}}^{t_n}\big(\|\bm{R}_1\|_{\prime}^2+\|\bm{f}-\bm{f}_h^n\|^2_{\prime}+\|R_2\|_{\prime}^2\\
&\quad+\|g-g_h^n\|^2+\|\bm{R}_3\|_{\prime}^2\big)dt+\underline{C}\tau_n\mathcal{E}^n_{\text{time}}F(\alpha).
\end{aligned}
\end{equation}
Note that $F(\alpha)\rightarrow0$ as $\alpha\rightarrow\infty.$ It then follows from \eqref{timebound1} and  \eqref{spacebound1} with sufficiently large $\alpha$ that
\begin{equation}\label{timebound2}
\begin{aligned}
        \tau_n\mathcal{E}^n_{\text{time}}\leq \underline{C}\int_{t_{n-1}}^{t_n}\big(\|\bm{f}-\bm{f}_h^n\|^2_{\prime}+\|g-g_h^n\|^2+\|\bm{R}_1\|^2_{\prime}+\|R_2\|^2_{\prime}+\|\bm{R}_3\|^2_{\prime}\big)dt.
\end{aligned}
\end{equation}
Clearly, the bounds \eqref{spacebound1} and \eqref{timebound2} imply that
\begin{equation}\label{spacebound4}
    \begin{aligned}
       &\tau_n\big(\mathcal{E}^n_1+\mathcal{E}^n_2+\mathcal{E}^n_3\big)\\
       &\leq \underline{C}\int_{t_{n-1}}^{t_n}\big(\|\bm{f}-\bm{f}_h^n\|^2_{\prime}+\|g-g_h^n\|^2+\|\bm{R}_1\|^2_{\prime}+\|R_2\|^2_{\prime}+\|\bm{R}_3\|^2_{\prime}\big)dt.
    \end{aligned}
\end{equation}
Finally, using Lemma \ref{lowerspace} and \eqref{splitdtR1}, \eqref{splitdtR3}, we have
\begin{equation}\label{spacebound5}
    \begin{aligned}
       &\mathcal{E}^n_{1,t}+\mathcal{E}^n_{3,t}\leq \underline{C}\big(\|\delta_t\bm{S}^n_1\|^2_{\prime}+\|\delta_t\bm{S}^n_3\|^2_{\prime}\big)\\
       &\quad\leq \underline{C}\big(\|\partial_t\bm{f}-\delta_t\bm{f}_h^n\|_{\prime}^2+\|\partial_t\bm{R}_1\|^2_{\prime}+\|\partial_t\bm{R}_3\|^2_{\prime}\big)
    \end{aligned}
\end{equation}
on $[t_{n-1},t_n].$
Combining \eqref{timebound2}, \eqref{spacebound4},
\eqref{spacebound5}, and \eqref{Rbound} then completes the proof.
\end{proof}

{In practice, one can use $\mathcal{E}^n_{\text{time}}$
  and $\mathcal{E}^n_{\text{data}}$ to adjust the time step size and
  $\mathcal{E}^n_{\text{space}}$ to refine and coarsen the spatial
  mesh. Due to the complexity of space-time adaptivity, we shall not
  present a concrete adaptive algorithm for the Biot's system. Such algorithms can be found in e.g., \cite{ErnSeb2009,RPEGK2017,ARN2019}. For
  the heat equation, readers are referred to e.g.,
  \cite{ChenFeng2004,Kreuzer2012,Verfurth2013,Kreuzer2019} for
  space-time adaptive algorithms as well as convergence analysis of
  adaptive methods. We point out that, once a space-time error
  indicator is available, a corresponding adaptive strategy follows and is largely independent
  of the equations.}

We now also present a new error estimator for mixed methods for time
dependent Darcy flow described by~\eqref{heat}. The fully discrete
scheme \eqref{fully} with $\bm{u}_h=\bm{v}=\bm{0}$ reduces to
\begin{subequations}\label{fullyheat}
\begin{align}
    c(\delta_tp_h^n,q)+d({\bm{w}}^n_h,q)&=({g}^n,q),\quad q\in Q^n_h,\\
    e({\bm{w}}^n_h,z)-d(\bm{z},{p}^n_h)&=0,\quad \bm{z}\in\bm{W}^n_h,
\end{align}
\end{subequations}
which obviously is a discretization of the heat equation or
time-dependent Darcy flow \eqref{heat}. Therefore, the a posteriori
analysis for \eqref{fully} directly applies to \eqref{fullyheat}. Here
$\bm{W}_h^n\times Q_h^n$ is the Raviart--Thomas and
Brezzi--Douglas--Marini mixed element space. Given an interval
$I\subseteq[0,T],$ we define the norm
$$\|(q,\bm{z})\|^2_{L^2(I;Y)}:=\int_I\big(\|q\|_c^2+\|\partial_tq\|_c^2+\|\bm{z}\|_{\bm{W}}^2+{\|\partial_t\bm{z}\|_{\bm{W}^\prime}^2}\big)ds.$$
Going through the proof of Theorems \ref{fullyresult} and \ref{lower}, $\bm{R}_1$ disappears when deriving the upper and lower bounds for the error of \eqref{fullyheat}. Therefore we obtain the following a posteriori error estimates.
\begin{corollary}
For $n=1,2,\ldots, N,$ the error of \eqref{fullyheat} satisfies
\begin{equation*}
    \begin{aligned}
    &\|E_p(t_n)\|_c^2+\|E_w(t_n)\|_e^2+\|(E_p,E_w)\|_{L^2(0,t_n;Y)}^2\\
    &\quad\lesssim\eta_{\text{init}}+\big(\sum_{i=1}^n\tau_i\widetilde{\mathcal{E}}^i_{\text{time}}+\tau_i(\tilde{\eta}_\text{space}^i)^\frac{1}{2}+\widetilde{\mathcal{E}}^i_{\text{data}}\big)^2\\
    &\quad+\sum_{i=1}^n\big(\tau_i\eta^i_{\text{time}}+\tau_i\eta^i_{\text{space}}+\int_{t_{i-1}}^{t_i}\|g-g_h^i\|^2dt\big),
    \end{aligned}
\end{equation*}
where
\begin{align*}
\eta_{\text{init}}&=\|p(0)-p_h^0\|_c^2+\|\bm{w}(0)-\bm{w}_h^0\|_e^2,\\
    \eta^i_{\text{time}}&=\|p_h^i-p_h^{i-1}\|^2_c+\|\bm{w}_h^i-\bm{w}_h^{i-1}\|^2_{\bm{W}},\\
    \tilde{\eta}_{\text{space}}^i&=\|g_h^i-\beta \delta_tp^i_{h}-\divg\bm{w}^i_h\|^2,\\
    \eta_{\text{space}}^i&=\tilde{\eta}_{\text{space}}^i+\mathcal{E}^3_{\Th^i}({p}^i_h,{\bm{w}}^i_h)+\mathcal{E}^3_{\Th^i\vee\Th^{i-1}}(\delta_tp^i_h,\delta_t{\bm{w}}^i_h).
\end{align*}
In addition, when $\bm{K}$ is a piecewise constant on $\Th^n$ it holds that,
\begin{equation*}
    \tau_n\eta^n_{\text{time}}+\tau_n\eta^n_{\text{space}}\lesssim\|(E_p,E_w)\|^2_{L^2(t_{n-1},t_n;Y)}+\int_{t_{i-1}}^{t_i}\|g-g_h^i\|^2dt.
\end{equation*}
\end{corollary}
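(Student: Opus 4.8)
The plan is to specialize the proofs of Theorems~\ref{fullyresult} and~\ref{lower} to the case $\bm{u}_h=\bm{v}=\bm{0}$, in which the elasticity equation and its residual $\bm{R}_1$ disappear. First I would record the specialized energy estimate: with $E_u\equiv\bm{0}$, Lemma~\ref{energy} applied to the error system
\begin{align*}
 c(\partial_tE_p,q)+d(E_w,q)&=\ab{R_2}{q},\quad q\in Q,\\
 e(E_w,\bm{z})-d(\bm{z},E_p)&=\ab{\bm{R}_3}{\bm{z}},\quad\bm{z}\in\bm{W},
\end{align*}
collapses to a bound of $\|E_p(t)\|_c^2+\|E_w(t)\|_e^2+\|(E_p,E_w)\|_{L^2(0,t;Y)}^2$ by the initial error $\eta_{\text{init}}$, the term $\big(\int_0^t\|R_2\|_\prime\,ds\big)^2$, and $\int_0^t(\|R_2\|_\prime^2+\|\bm{R}_3\|_\prime^2+\|\partial_t\bm{R}_3\|_\prime^2)\,ds$; here one uses that the $X$-norm with its $\bm{u}$-components removed is exactly the $Y$-norm, that $\partial_t\bm{R}_3$ survives while $\partial_t\bm{R}_1$ does not, and that the inf-sup argument behind \eqref{extra1}--\eqref{Gvar13} goes through with only the mixed Poisson block $B(\bm{w},p;\bm{z},q)=e(\bm{w},\bm{z})-d(\bm{z},p)+d(\bm{w},q)$.

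For the upper bound I would repeat the proof of Theorem~\ref{fullyresult}. Split $R_2=g-g_h^n+S_2^n+T_2^n$ and $\bm{R}_3=\bm{S}_3^n+\bm{T}_3^n$ as in~\eqref{split}; now $\ab{S_2^n}{q}=\ab{g_h^n}{q}-c(\delta_tp_h^n,q)-d(\bm{w}_h^n,q)$, so Lemma~\ref{spacelemma} gives $\|S_2^n\|_\prime^2\lesssim\mathcal{E}_2^n$, and $\mathcal{E}_2^n$ (with $\divg\delta_t\bm{u}_h^n$ deleted) is precisely $\tilde\eta_{\text{space}}^n=\|g_h^n-\beta\delta_tp_h^n-\divg\bm{w}_h^n\|^2$; likewise $\|\bm{S}_3^n\|_\prime^2\lesssim\mathcal{E}^3_{\Th^n}(p_h^n,\bm{w}_h^n)$ and $\|\delta_t\bm{S}_3^n\|_\prime^2\lesssim\mathcal{E}^3_{\Th^n\vee\Th^{n-1}}(\delta_tp_h^n,\delta_t\bm{w}_h^n)$, the two remaining pieces of $\eta_{\text{space}}^n$. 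The temporal residuals are bounded by continuity of $d(\cdot,\cdot)$ and $e(\cdot,\cdot)$: $\|T_2^n\|_\prime+\|\bm{T}_3^n\|_\prime\lesssim\|p_h^n-p_h^\tau\|_c+\|\bm{w}_h^n-\bm{w}_h^\tau\|_{\bm{W}}$, which via~\eqref{uttau} integrates to $\tau_n\eta_{\text{time}}^n$ over $[t_{n-1},t_n]$, except for the first-order-in-time $\divg$ part of $T_2^n$, which as in~\eqref{R2} contributes $\tau_n\widetilde{\mathcal{E}}_{\text{time}}^n=\tau_n\|\divg(\bm{w}_h^n-\bm{w}_h^{n-1})\|$ inside the $\big(\sum_i\cdots\big)^2$ term. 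Feeding these and the data terms (which reduce to $\int\|g-g_h^i\|^2$ and $\int\|g-g_h^i\|$) into the specialized energy estimate and summing over $i=1,\dots,n$ yields the claimed reliability bound.

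For the lower bound I would repeat the proof of Theorem~\ref{lower} after deleting every occurrence of $\bm{R}_1$, $\bm{S}_1^n$, $\bm{T}_1^n$, $\bm{u}_h^n$ and $\bm{f}-\bm{f}_h^n$: Lemma~\ref{lowerspace} gives $\mathcal{E}_2^n\lesssim\|S_2^n\|_\prime^2$, $\mathcal{E}_3^n\lesssim\|\bm{S}_3^n\|_\prime^2$, $\mathcal{E}_{3,t}^n\lesssim\|\delta_t\bm{S}_3^n\|_\prime^2$; the inf-sup condition for $B$ controls $\|\bm{w}_h^n-\bm{w}_h^\tau\|_{\bm{W}}+\|p_h^n-p_h^\tau\|_c$ by $\ab{T_2^n}{q}+\ab{\bm{T}_3^n}{\bm{z}}$, hence, after subtracting the spatial and data parts, by $\|R_2\|_\prime+\|\bm{R}_3\|_\prime+\|g-g_h^n\|$; and the weight $\phi(t)=(\alpha+1)\big((t-t_{n-1})/\tau_n\big)^\alpha$ with $F(\alpha)\to0$ as $\alpha\to\infty$ lets one absorb the $\tau_n\eta_{\text{time}}^nF(\alpha)$ contribution and close the bounds for $\tau_n\eta_{\text{space}}^n$ and $\tau_n\eta_{\text{time}}^n$. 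Finally \eqref{R2prime} with $E_u=\bm{0}$ (so $\|R_2\|_\prime\lesssim\|\partial_tE_p\|_c+\|\divg E_w\|$), together with $\|\bm{R}_3\|_\prime\lesssim\|E_w\|_e+\|E_p\|_c$ and $\|\partial_t\bm{R}_3\|_\prime\lesssim\|\partial_tE_w\|_{\bm{W}^\prime}+\|\partial_tE_p\|_c$, converts the residual norms into $\|(E_p,E_w)\|_{L^2(t_{n-1},t_n;Y)}^2+\int_{t_{n-1}}^{t_n}\|g-g_h^n\|^2\,dt$.

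The only point requiring care — not a genuine obstacle — is bookkeeping: verifying that $\widetilde{\mathcal{E}}^i_{\text{time}}$, $\widetilde{\mathcal{E}}^i_{\text{data}}$, $\tilde\eta^i_{\text{space}}$, $\eta^i_{\text{time}}$ and $\eta^i_{\text{space}}$ are exactly the residues of the Biot estimators of Theorem~\ref{fullyresult} once the $\bm{u}$-contributions are dropped, and that Theorem~\ref{regqi}, Lemma~\ref{spacelemma} and Lemma~\ref{lowerspace} still apply — which they do, since $\bm{W}_h^n\times Q_h^n$ is the lowest-order Raviart--Thomas or Brezzi--Douglas--Marini pair, so $\bm{W}_h^0\subseteq\bm{W}_h^n$, and since the lower bound still assumes $\bm{K}$ piecewise constant on $\Th^n$, inherited from Lemma~\ref{lowerspace}.
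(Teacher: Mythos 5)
Your proposal is correct and follows essentially the same route as the paper, which obtains this corollary precisely by observing that the proofs of Theorems \ref{fullyresult} and \ref{lower} go through verbatim with $\bm{u}_h=\bm{v}=\bm{0}$, so that $\bm{R}_1$, $\bm{S}_1^n$, $\bm{T}_1^n$ and the $\bm{f}$-data terms drop out while the mixed Poisson block $B(\bm{w},p;\bm{z},q)$ carries the inf-sup and bubble-function arguments. Your bookkeeping of which estimator pieces survive (in particular that $\mathcal{E}_2^n$ reduces to $\tilde\eta_{\text{space}}^n$ and that the $X$-norm collapses to the $Y$-norm) matches the paper's definitions exactly.
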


{\section{Numerical examples}\label{secexp}
To support the theoretical results and show the behavior of the fully discrete error indicators,
we present a two-dimensional numerical example. 
The domain $\Omega=(0,1)\times (0,1)$ is the unit square in $\mathbb{R}^2$, and we consider the three field Biot's problem~\eqref{Biot} given
in~\S~\ref{secenergy}.
The rest of the setup is as follows.
\begin{itemize}
\item We set the Lam\'e parameters as $\lambda=\mu=0.4$.

\item $\beta=1$, $\alpha=1$, and $K=I\in \mathbb{R}^{2\times 2}$, the $2\times2$ identity matrix.
\item The analytic solution to the problem is
\begin{eqnarray*}
  &&  \bm{u}(t,\bm{x})=\cos t
     \begin{pmatrix}
       \sin(\pi x_1)\sin(\pi x_2)\\
       \sin(\pi x_1)\sin(\pi x_2)\\
     \end{pmatrix},\quad
     \bm{w}=-\nabla p= \pi\sin t
     \begin{pmatrix}
      \sin(\pi x_1)\cos(\pi x_2)\\
       \cos(\pi x_1)\sin(\pi x_2)\\
     \end{pmatrix},\\
&&  p(t,\bm{x})=\sin t\cos(\pi x_1)\cos(\pi x_2)
\end{eqnarray*}
\item As boundary conditions we take homogeneous Dirichlet condition for $\bm{u}$ and homogeneous Neumann condition for $p$. Correspondingly, $\Gamma_2=\emptyset$ in Section \ref{secenergy}.
\item We use a sequence of uniform triangular grids $\mathcal{T}_{h_k}$ with mesh sizes $h_k=2^{-k}$, $k=1:J$, $J=6$ or $J=7$. In Figure~\ref{fig:meshes} we have shown the coarsest mesh and a finer mesh.
  \begin{figure}[!h]
    \includegraphics[width=0.45\textwidth]{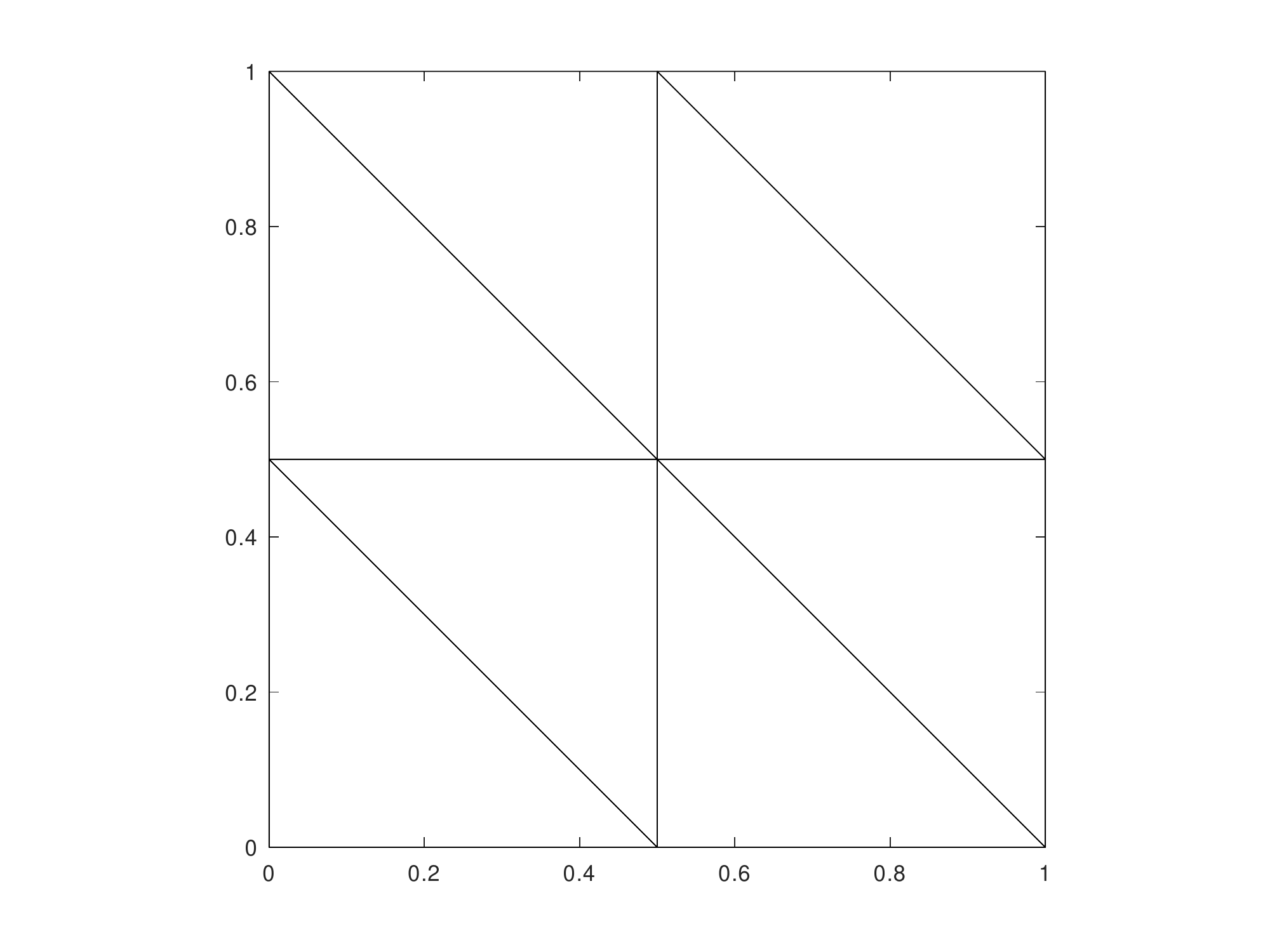}\hspace{0.1in}
    \includegraphics[width=0.45\textwidth]{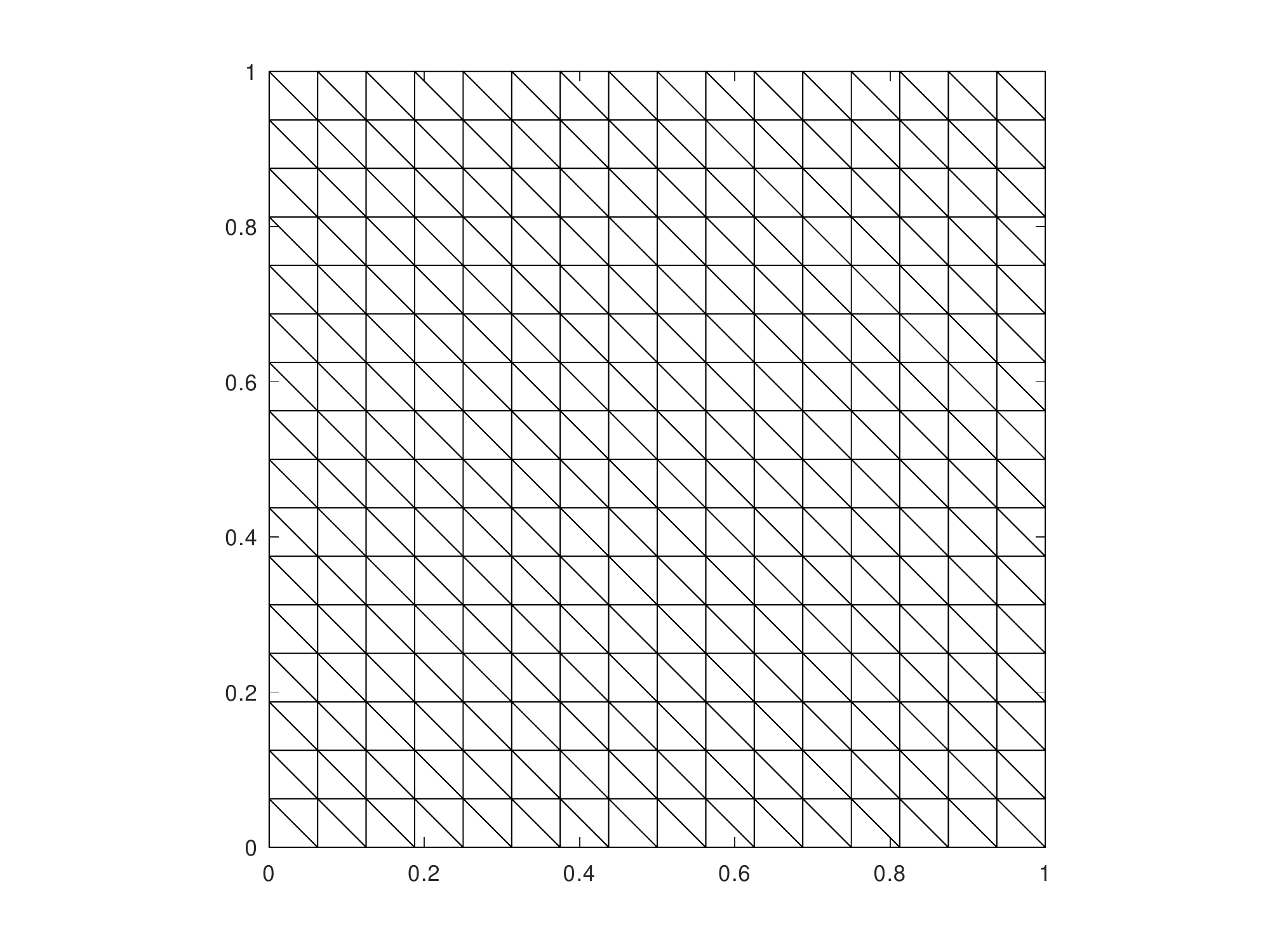}
    \caption{\label{fig:meshes}Coarsest mesh used in the experiments is
      with mesh size $h=2^{-1}$~(left). On the right we have shown a finer mesh with meshsize $h=2^{-4}$.}
    \end{figure}
\item The spatial discretization for \eqref{Biot} is based on the lowest order finite space triple $\bm{V}^0_{h_k}\times Q^0_{h_k}\times\bm{W}^0_{h_k}$ on grid $\mathcal{T}_{h_k}$ in Section \ref{secsemi}.
{\item We start the simulations at $t=0$ and reach the final
  time $T=1$ after $N=T/\tau$ time steps with uniform time step size $\tau$. We present two sets of tests: (1) a test where $h=h_k=2^{-k}$, $k=1:7$ and $\tau=\tau_k=0.4h_k$ change simultaneously; and (2) a test with fixed $\tau=5\times 10^{-5}$ and varying $h=h_k=2^{-k}$, $k=1:6$.
\item We set $p_h^0=0$, $\bm{w}_h^0=\bm{0}$, and choose $\bm{u}_h^{0}$ such that~\eqref{fully1} is satisfied
  at $t=0$, i.e. the equation \eqref{fully1} holds for $n=0$.
}
\item In computing the a priori error, we have omitted the term
  containing the  error term $\|\partial_t(\bm{w}-\bm{w}_h^\tau)\|^2_{\bm{W}^{\prime}}$ from the
  space-time norm $\|(\cdot,\cdot,\cdot)\|_{L^2(I;X)}$ and the modified norm is still denoted as $\|(\cdot,\cdot,\cdot)\|_{L^2(I;X)}$ by abuse of notation. This is
  motivated by the fact that such term only enters the computation of
  the a priori error and its computation is rather involved. However, the ratio between the true error and our indicator might be exaggerated by dropping that term.
\end{itemize}

We first present a simple test illustrating the
  efficiency of the indicator numerically by decreasing $h$ and the
  time step $\tau$ simultaneously.
Let $(\bm{u}_{h}^{\tau},p_{h_k}^{\tau},\bm{w}_{h_k}^{\tau})$ be the continuous temporal linear interpolant of the solution from \eqref{fully} with  $\mathcal{T}_h=\mathcal{T}_{h_k}$ and time step size $\tau=\tau_k=0.4h_k$.  In the first test, we compute the a priori error $E_k=\|(\bm{u}-\bm{u}_{h_k}^{\tau},p-p_{h_k}^{\tau},\bm{w}-\bm{w}_{h_k}^{\tau})\|_{L^2(0,T;X)}$
in Theorem \ref{fullyresult} with 5-point Gaussian quadrature on each time interval $[t_{n-1},t_n]$ and 25-point Gaussian quadrature on every element in $\mathcal{T}_{h_k}$. The global error $E_k$ is compared with following a posteriori error indicator
$$\mathcal{E}_{k} =\left(\sum_{n=1}^N\tau_k\mathcal{E}^n_{\text{time}}+\sum_{n=1}^N\tau_k\mathcal{E}^n_{\text{space}}\right)^{1/2}$$
on the stationary mesh $\mathcal{T}_{h_k}.$ Here we do not include the data oscillation $\mathcal{E}^n_{\text{data}}$ in $\mathcal{E}_k$ because it is generally small and dominated by other terms. { The results are shown in
  Table~\ref{t0}. As is seen from these results the convergence of the
  method is of order $(h+\tau)$. Moreover, the ratio between the a
  priori error and the value of the a posteriori error indicator approaches $2.5$.
 \begin{table}[!h]
   \centering
   \begin{tabular}{|l|l|l|l|l|l|}\hline\hline
     $h=2^{-k}$ & $E_k$ & $\mathcal{E}_k$ & $\mathcal{E}_k$/$E_k$
     & $E_{k-1}/E_k$ &$\mathcal{E}_{k-1}/\mathcal{E}_{k}$      \\ \hline
     $k=1$ &$0.947$ & $2.375$&  $2.51$  & N/A      &      N/A \\ \hline
     $k=2$ &$0.499$ & $1.250$&  $2.51$   & $1.906$ & $1.900$    \\ \hline
     $k=3$ &$0.253$ & $0.633$&  $2.50$   & $1.974$ &  $1.976$    \\ \hline
     $k=4$ &$0.127$ & $0.317$&  $2.50$     & $1.992$ &   $1.994$   \\ \hline
     $k=5$ &$0.064$ & $0.159$&  $2.50$    & $1.998$ & $1.998$   \\ \hline
     $k=6$ &$0.032$ & $0.079$&  $2.50$    & $2.000$ &    $2.000$   \\ \hline
     $k=7$ &$0.016$ & $0.039$&  $2.50$    & $2.000$ &    $2.000$   \\ \hline
   \end{tabular}
   \caption{A priori and a posteriori errors for simultaneously decreasing $h$ and $\tau$ with $\tau=0.4h$.\label{t0}}
 \end{table}
}

The next set of tests is for a fixed relatively small time step $\tau$
and aimed at comparing various characteristics of the indicators for different mesh sizes.
For a fixed time $t_n$, $n=1:N$ on a grid of size $h_k=2^{-k}$
we denote the ``true'' error at $t=t_n$ as
\begin{align*}
    e_k^n&=\big(\|\bm{u}(t_n)-\bm{u}_h^n\|_a^2+\|\partial_t\bm{u}(t_n)-\delta_t\bm{u}_h^n\|_a^2+\|p(t_n)-p_h^n\|_c^2\\
         &\quad+\|\partial_tp(t_n)-\delta_tp_h^n\|_c^2+\|\bm{w}(t_n)-\bm{w}_h^n\|_{\bm{W}}^2
           \big)^\frac{1}{2},
\end{align*}
which is again computed using 25-point Gaussian quadrature
element-wise. We compare $e_k^n$ with the following instantaneous
error estimator at $t_n$
\[
\varepsilon_k^n = \left(\mathcal{E}^n_{\text{time}}+\mathcal{E}^n_{\text{space}}\right)^{1/2}.
\]
Note that $E_k\approx\big(\sum_{n=1}^N\tau[e_k^n]^2\big)^\frac{1}{2}$ and $\mathcal{E}_k=\big(\sum_{n=1}^N\tau[\varepsilon_k^n]^2\big)^\frac{1}{2}$.

In Figure~\ref{f1} we show the plot of $\varepsilon_{k}^n$ for $n=1:N$ with $N=20,000$
for different mesh size $h_k$ as well as the ratios between the
indicators on two consecutive meshes, namely,
$\left(\varepsilon_k^n/\varepsilon_{k-1}^n\right)$. Similar behavior is observed in
Figure~\ref{f2}, where we have plotted the error reduction as predicted by the fully discrete error indicators.
\begin{figure}[!htb]
\includegraphics[width=0.45\textwidth]{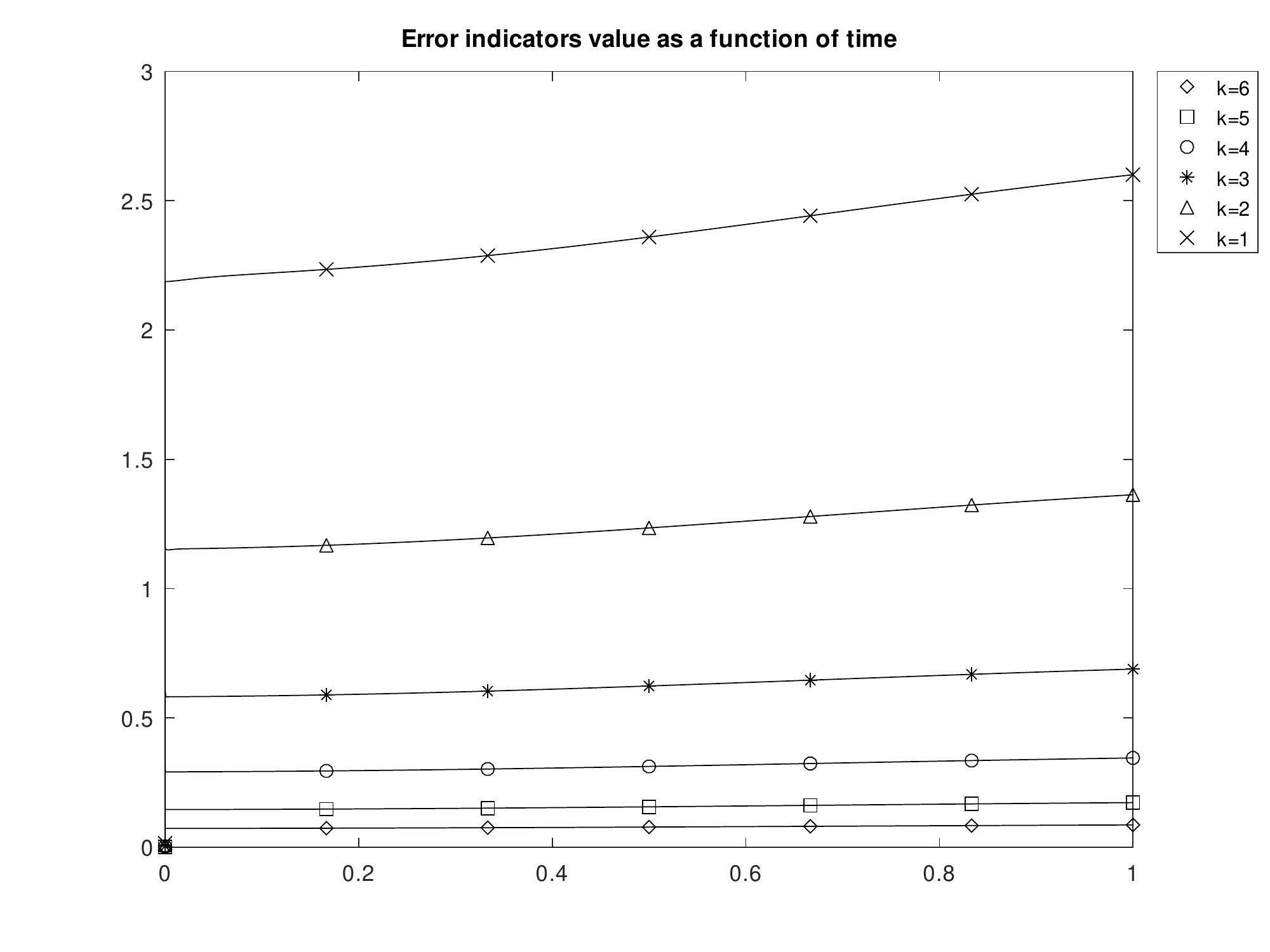}\hspace{0.1in}
\includegraphics[width=0.45\textwidth]{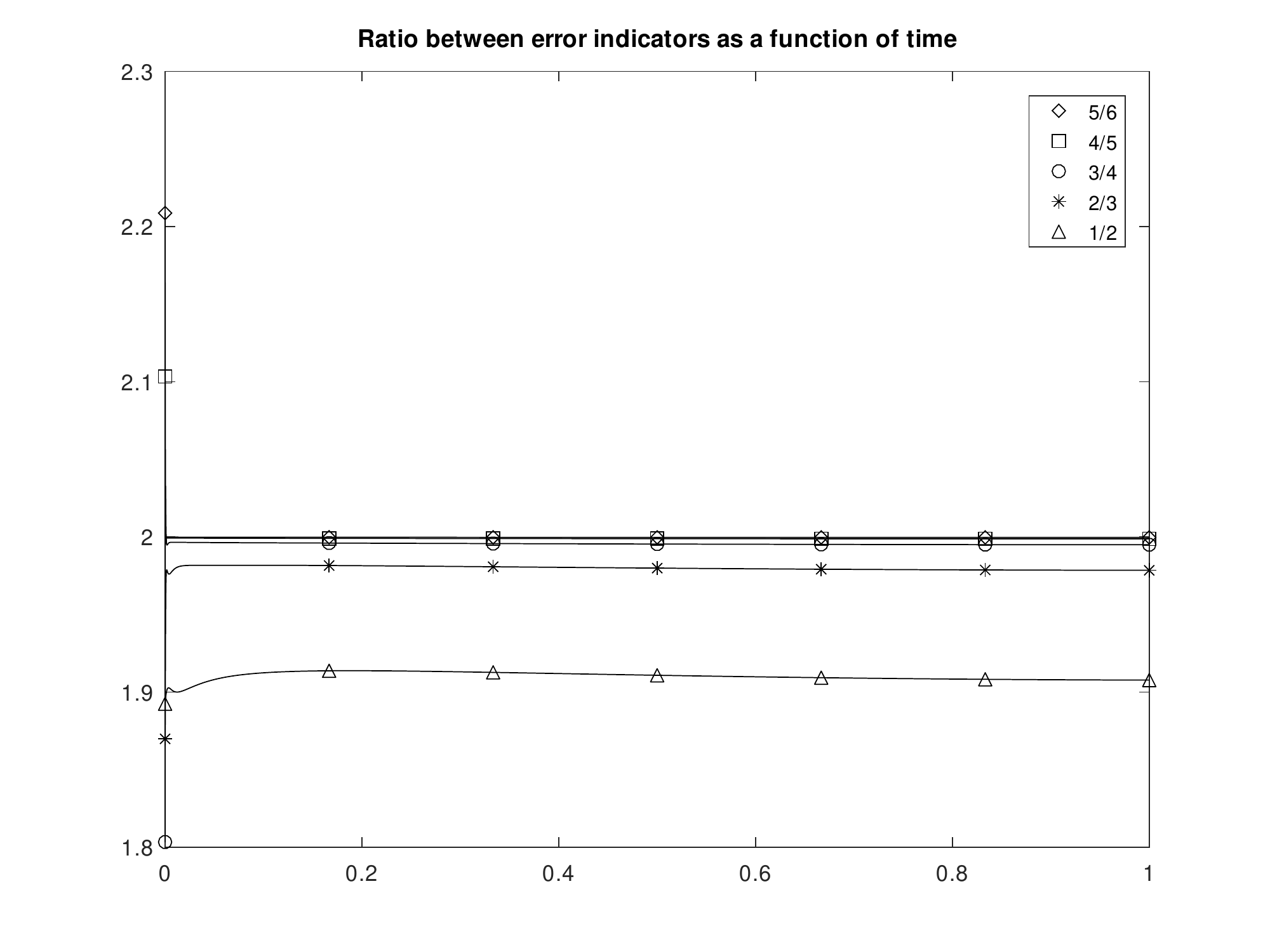}
\caption{\label{f1}Plot of $\{\varepsilon_k^n\}_{n=1}^N$, $k=1:6$ (left) and
    $\left(\varepsilon_{k-1}^n/\varepsilon_{k}^n\right)$ for $k=2:6$ (right)}
\end{figure}

\begin{figure}[!htb]
  \centering
\includegraphics[width=0.45\textwidth]{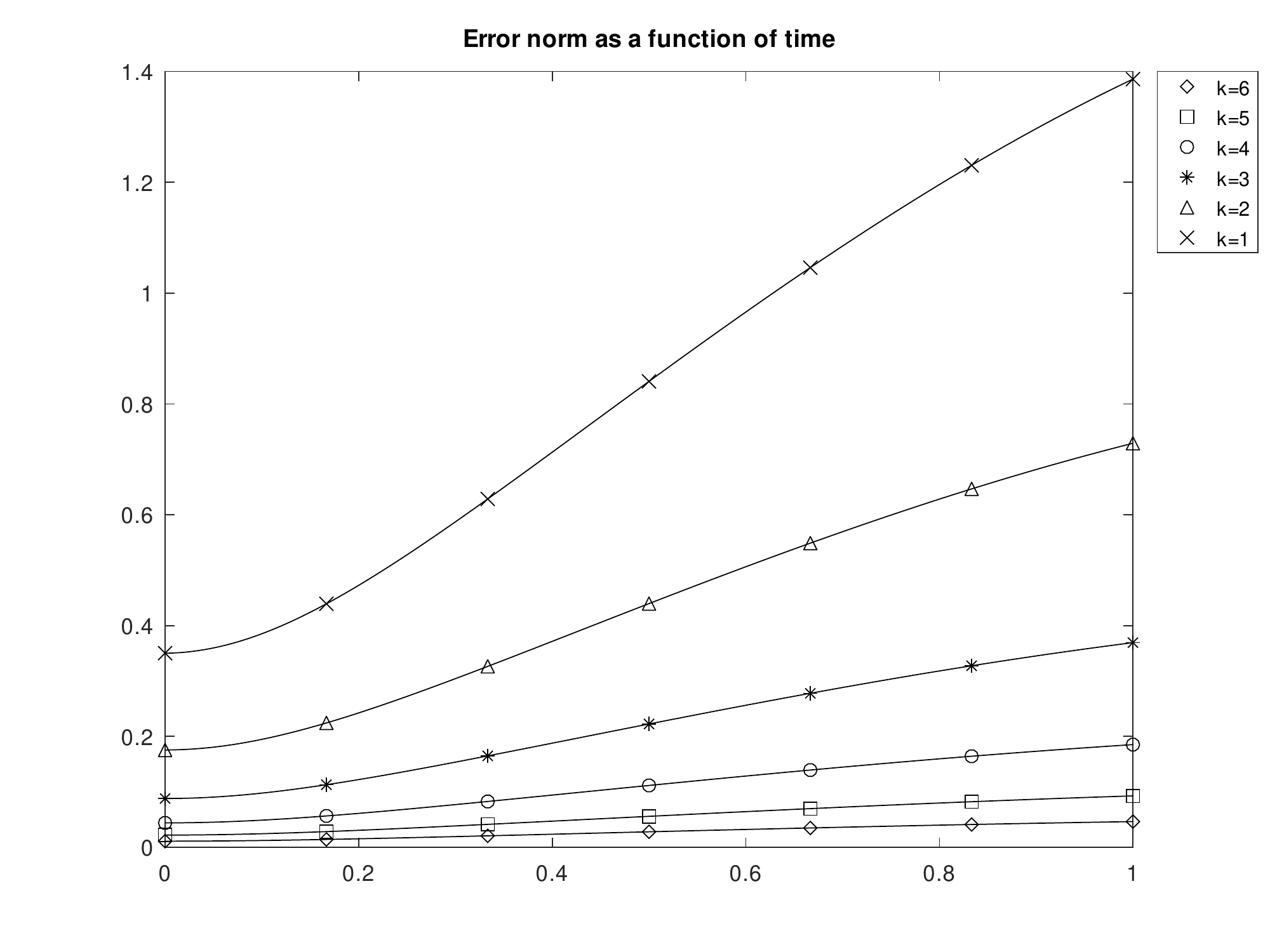}\hspace{0.1in}
\includegraphics[width=0.45\textwidth]{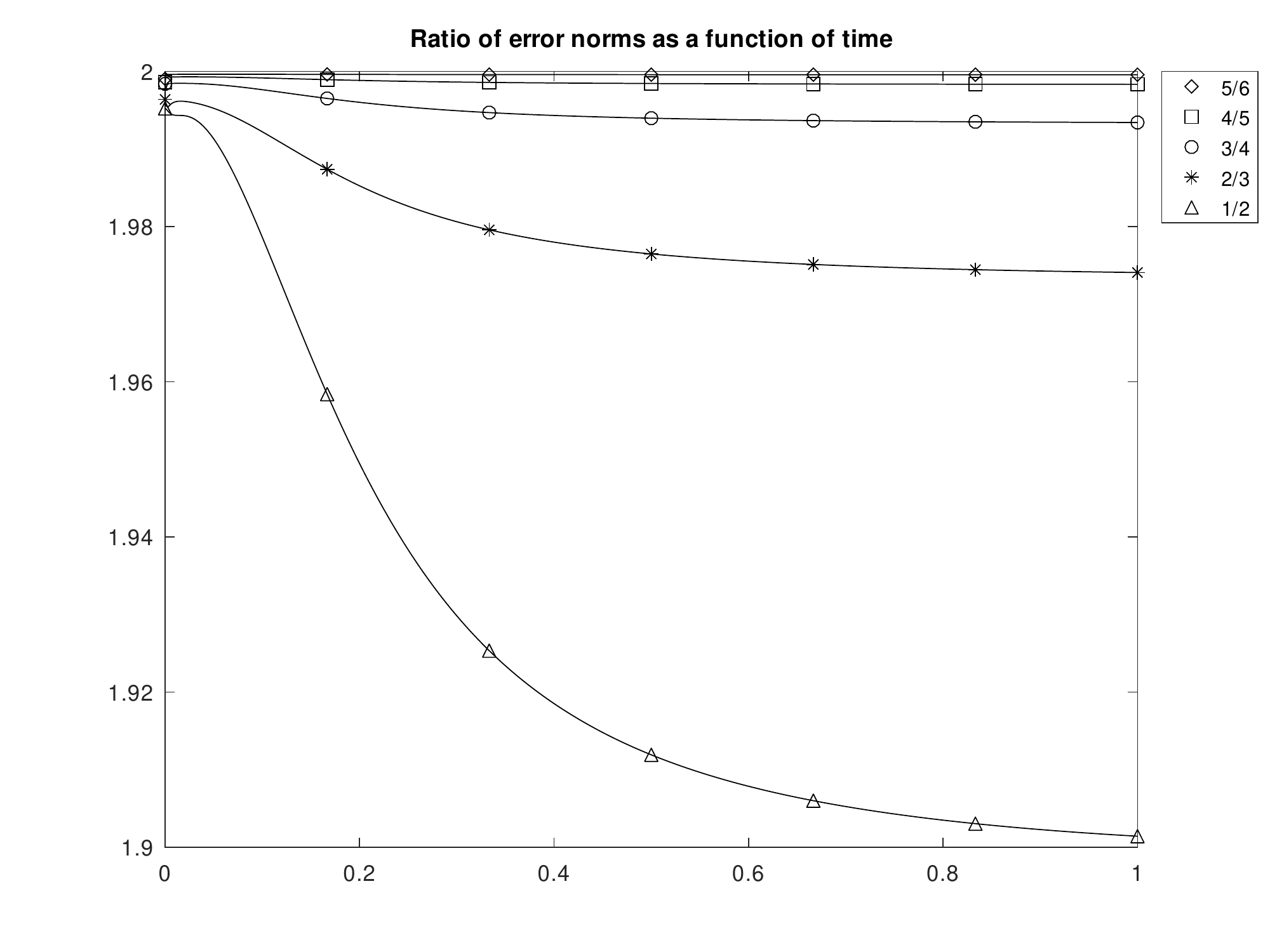}
\caption{\label{f2}Plot of $\{e_k^n\}_{n=1}^N$, $k=1:6$ (left) and
    $\left(e_{k-1}^n/e_{k}^n\right)$ for $k=2:6$ (right)}
\end{figure}

In Figure~\ref{f3} we plotted the ratio between the error indicators and the norm of the error as a function of time for varying mesh sizes.
As is seen from this plot, this ratio remains bounded. Notice that the theory developed earlier
shows reliability and efficiency when we integrate the indicators and the error norm over the interval $[0,T]$.
Such results (after integrations) are found in Table~\ref{t1} where
we illustrate the conclusions of Theorem~\ref{fullyresult}.
\begin{figure}[!htb]
  \centering
\includegraphics[width=0.5\textwidth]{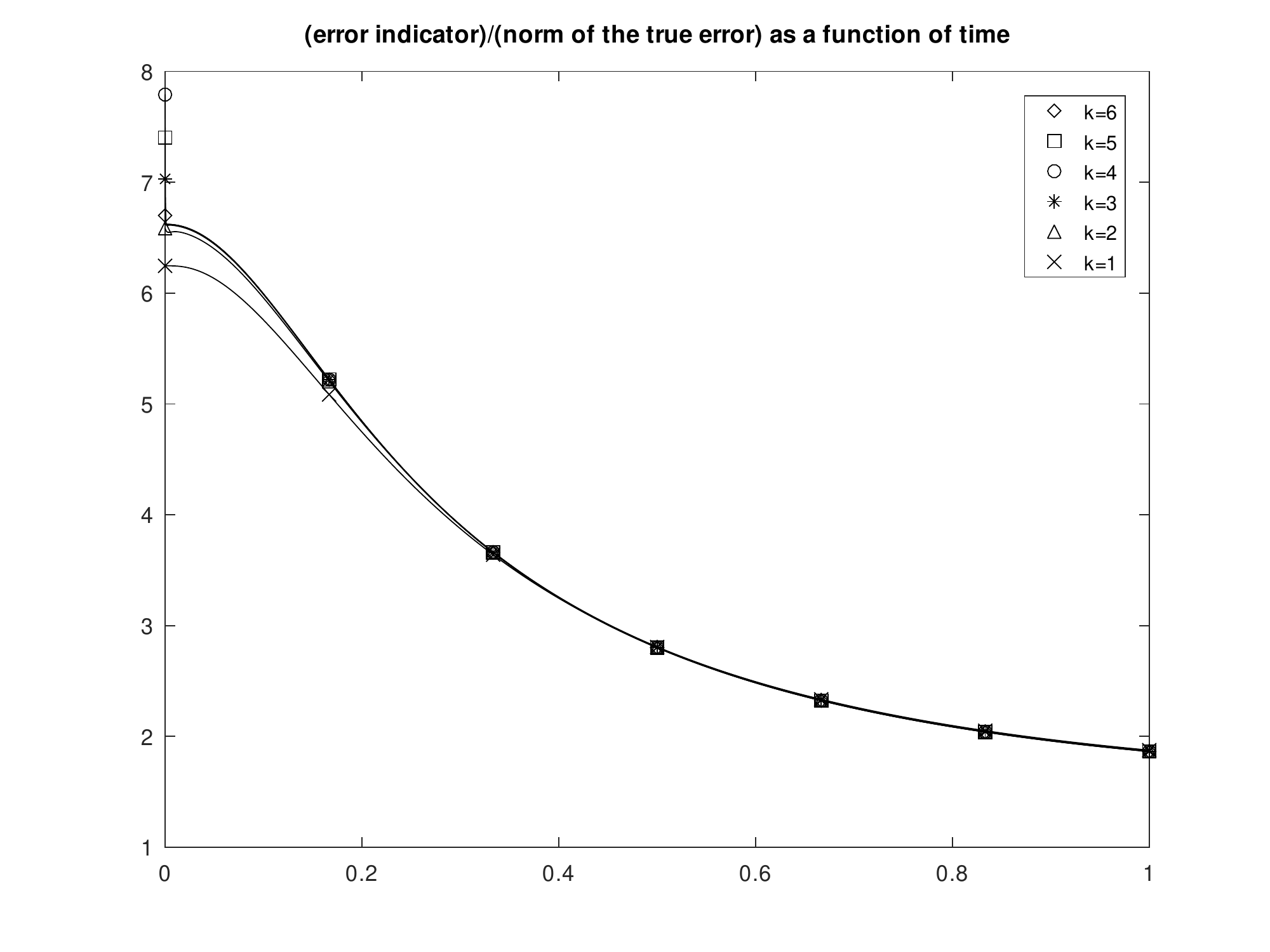}
\caption{\label{f3}Ratio between the values of the error indicators and the norm of the error as functions of time.}
\end{figure}
As seen from this table,
and expected from the theoretical results presented
earlier, the ratio between the indicators shows reduction by $2$. Moreover we see that the proposed fully discrete error indicators provide  two sided bounds for the error up to reasonable multiplicative constant.

\begin{table}\label{t1}
  \centering
  \begin{tabular}{|l|l|l|l|l|l|l|}\hline\hline
    $(h=2^{-k})$ &$k=1$ & $k=2$ &$k=3$ & $k=4$ &$k=5$ &$k=6$\\ \hline\hline 
$\mathcal{E}_{k-1}/\mathcal{E}_{k}$
 & N/A    & $1.910$ &  $1.976$ &  $1.994$ & $1.997$ & $2.003$ \\ \hline
 $E_{k-1}/E_{k}$
 & N/A    & $1.910$ &  $1.980$ &  $1.995$ & $1.999$ & $2.000$ \\ \hline
 $\mathcal{E}_{k}/E_{k}$
 & $2.63$ &  $2.63$ &   $2.63$ &  $2.62$  & $2.62$ & $2.62$ \\ \hline
  \end{tabular}
  \caption{Robustness and efficiency of the error indicators for $\tau=5\times 10^{-5}$ and varying mesh sizes.}
\end{table}
}

\section{Conclusion}\label{secconc}
In this paper, we obtain a two-sided residual a posteriori error estimator for the three-field mixed method in Biot's consolidation model. It is expected that our a posteriori error analysis generalizes to mixed methods for the five-field formulation based on weakly symmetric stress tensor (see \cite{LJJ2016}), although to present such a generalization might require an elaborate analysis. Combining our analysis with a posteriori error estimation of mixed methods for elasticity using strong symmetric stress (see e.g.,~\cite{CGG2019,CHHM2018,YL2019c}), we hope to obtain a two-sided residual estimator for the four-field formulation.

\section*{Acknowledgements}
The work of Zikatanov was supported in part by NSF grants DMS-1720114 and DMS-1819157.

\end{document}
